\newcommand*{\mailto}[1]{\href{mailto:#1}{\nolinkurl{#1}}}
\numberwithin{equation}{section}
\newtheorem{example}{Example}[section]
\newtheorem{theorem}[example]{Theorem}
\newtheorem{proposition}[example]{Proposition}
\newtheorem{remark}[example]{Remark}
\newtheorem*{maintheorem*}{Main Theorem}
\numberwithin{equation}{section}
\newcommand\so{
	\mathchoice
	{{\scriptstyle\mathcal{O}}}
	{{\scriptstyle\mathcal{O}}}
	{{\scriptscriptstyle\mathcal{O}}}
	{\scalebox{0.6}{$\scriptscriptstyle\mathcal{O}$}}
}
\renewcommand{\i}{\ifmmode\mathit{\mathchar"7010 }\else\char"10 \fi}
\renewcommand{\j}{\ifmmode\mathit{\mathchar"7011 }\else\char"11 \fi}
\newcommand{\R}{\mathbb{R}}
\newcommand{\C}{\mathbb{C}}
\newcommand{\N}{\mathbb{N}}
\newcommand{\Z}{\mathbb{Z}}
\newcommand{\px}{\partial_x}
\newcommand{\ve}{\varepsilon}
\newcommand{\I}{\mathrm{i}}
\newcommand{\Imm}{\mathrm{Im}}
\newcommand{\Ree}{\mathrm{Re}}
\begin{document}
	
	\title[NNLS with oscillatory boundary conditions ]{The integrable nonlocal nonlinear Schr\"odinger equation with oscillatory boundary conditions: long-time asymptotics}
	
	
	\author[Rybalko]{Yan Rybalko}
	\author[Shepelsky]{Dmitry Shepelsky}
	\author[Tian]{Shou-Fu Tian}

	\address[Yan Rybalko]{\newline
		Department of Mathematics,  University of Oslo, \newline
		PO Box 1053, Blindern -- 0316 Oslo, Norway
		\medskip
		\newline
		Mathematical Division, 
		B.Verkin Institute for Low Temperature Physics and Engineering
		of the National Academy of Sciences of Ukraine,
		\newline 47 Nauky Ave., Kharkiv, 61103, Ukraine}
	\email[]{rybalkoyan@gmail.com}
	
	\address[Dmitry Shepelsky]{\newline
		Mathematical Division, 
		B.Verkin Institute for Low Temperature Physics and Engineering
		of the National Academy of Sciences of Ukraine,
		\newline 47 Nauky Ave., Kharkiv, 61103, Ukraine}
	\email[]{shepelsky@yahoo.com}
	
	\address[Shou-Fu Tian]{
		\newline School of Mathematics, 
		China University of Mining and Technology,
		\newline
		Xuzhou, 221116, P. R. China
	}
	\email[]{sftian@cumt.edu.cn}

	\subjclass[2020]{Primary: 35G25;
		Secondary: 37K10}
	
	\keywords{Nonlinear Schr\"odinger equation,
		nonlocal integrable systems, two-place systems, nonzero boundary conditions, oscillatory boundary conditions, step-like boundary conditions}
	
	
	\date{\today}
	
	\begin{abstract}
		We consider the Cauchy problem for 
		the integrable nonlocal nonlinear Schr\"odinger equation
		\[
		\I q_{t}(x,t)+q_{xx}(x,t)+2 q^{2}(x,t)\bar{q}(-x,t)=0,
		\]
		subject to the step-like initial data: $q(x,0)\to0$ as $x\to-\infty$ and 
		$q(x,0)\simeq Ae^{2\I Bx}$ as $x\to\infty$, where $A>0$ and $B\in\mathbb{R}$.
		The  goal is to study
		the long-time asymptotic behavior of the solution of this  problem
		assuming that $q(x,0)$ is close, in a certain spectral sense, to the ``step-like'' 
		function $q_{0,R}(x)=
		\begin{cases}
			0, &x\leq R,\\
			Ae^{2\I Bx}, &x>R,
		\end{cases}$
		with $R>0$.
		A special attention is paid to how $B\ne0$ affects the asymptotics.
	\end{abstract}
	
	\maketitle
	
	\tableofcontents
	
	\section{Introduction}
	
	Nonlinear evolution partial differential equations with nonzero boundary conditions as the spatial variable approaches infinity, particularly when the conditions differ at opposite infinities (known as step-like conditions), have captured the attention of researchers, physicists as well as mathematicians, since the 1970s \cite{Bik89,GP73,GP74,Khr75,Khr76,Ven86}. This interest stems from the remarkable properties of their solutions, including the wide variety of patterns observed in their large-time behavior. In particular, when the nonlinear equation turns to be  integrable (in the sense that it possesses the Lax pair representation), it is the inverse scattering transform (IST) method that have proved its extreme efficiency in studying the relevant asymptotic pictures in details.
	
	It is generally recognized that the simplest (although fundamental in many aspects) scalar integrable nonlinear equations arise as reductions of the two-component system of nonlinear Schr\"odinger equations (a.k.a.\,\,the Ablowitz--Kaup--Newell--Segur (AKNS) system \cite{AKNS})
	\begin{equation}
		\label{ss}
		\begin{split}
			& \I q_t(x,t)+q_{xx}(x,t)-2q^2(x,t)r(x,t)=0,\\
			&  \I r_t(x,t)-r_{xx}(x,t)+2r^2(x,t)q(x,t)=0.
		\end{split}
	\end{equation}
	Namely, the reductions $r(x,t)=\bar q(x,t)$ and $r(x,t)=-\bar q(x,t)$ of \eqref{ss}
	(bar stands for the complex conjugation)
	give rise to respectively the defocusing and focusing nonlinear Schr\"odinger 
	(NLS) equations
	\begin{equation}
		\label{NLS}
		\I q_{t}(x,t)+q_{xx}(x,t)\mp 2|q(x,t)|^2q(x,t)=0.
	\end{equation}
	The IST method for the NLS equations (on the zero background) was developed by
	Vladimir Zakharov and Alexei Shabat in 1971 \cite{ZSh71}.
	
	On the other hand, the reductions $r(x,t)=\pm \bar q(-x,t)$
	lead to the nonlocal (two-place) versions of these equations
	\begin{equation}
		\label{NNLS-2}
		\I q_{t}(x,t)+q_{xx}(x,t)\mp 2 q^2(x,t) \bar q(-x,t)=0,
	\end{equation} 
	whose detailed study by the IST method was initiated  by Ablowitz and Musslimani in \cite{AM13}.
	We provide further contextual information about the NNLS equations later in the introduction.
	
	Since the conventional (local) nonlinear Schr\"odinger equations \eqref{NLS} don't support constant solutions,
	the simplest nonzero solutions for them have the form $q(x,t)=Ae^{2\I Bx+2\I\omega t+\I \phi}$,
	where the (real) constants $A\ge0 $, $B$, $\omega$, and $\phi$ are related as $\omega=\mp A^2-2B^2$.
	Respectively, the simplest non-decaying initial conditions $q_0(x)$ for the Cauchy problem
	for the nonlinear Schr\"odinger equations are those approaching the corresponding exponentials
	fast enough when $x\to\pm\infty$:
	\begin{equation*}
		q_0(x)\simeq
		\begin{cases}
			A_1e^{2\I B_1x + \I \phi_1}, & x\to -\infty,\\
			A_2e^{2\I B_2x +\I \phi_2}, & x\to \infty.\\
		\end{cases}
	\end{equation*}
	
	Regarding the local nonlinear Schr\"odinger equations \eqref{NLS},
	their focusing and defocusing versions are substantially different in many aspects. Particularly, this is due to the fact that the Zakharov-Shabat systems playing the role of the spatial equation of the Lax pair are either
	formally self-adjoint (in the defocusing case) or not (in the focusing case).
	In the defocusing case, the IST method for the problems with the boundary conditions of the finite density type ($A_1=A_2\ne0$, $b_1=B_2=0$) was initiated 
	by Zakharov and Shabat in \cite{ZSh73}. Since then, an extensive literature devoted to problems  on nonzero backgrounds has been developed. Without pretending to give a comprehensive review, we note that the solution of the defocusing NLS equation with asymmetric ($A_1\ne A_2$) nonzero boundary conditions was studied by IST methods in \cite{BP1982}
	(see also \cite{BFP16}), and extensive results on its long-term behavior were presented in \cite{FLQ24, J2015}.
	
	The first studies of the focusing NLS equation with nonzero boundary conditions by the IST method were presented in \cite{KI78,Ma79}, with  a single background that can be characterized by $A_1=A_2$, $\phi_1=\phi_2$, and $B_1=B_2=0$ (i.e., the solution is assumed to approach $Ae^{\mp 2\I A^2 t}$). In particular, the Ma soliton \cite{Ma79} (also discovered in \cite{KI78}) was introduced. It was also mentioned in \cite{Ma79} that a plane wave solution corresponds to a one-band potential in the spectrum of the Zakharov--Shabat scattering equations, whereas the cnoidal wave (elliptic function) and the multicnoidal wave (hyperelliptic function) solutions correspond to two-band and $N$-band potentials, respectively. A perturbation theory for the NLS equation with non-vanishing boundary conditions was put forward in \cite{GK12}, where particular attention was paid to the stability of the Ma soliton. The results of the Whitham theory for the focusing NLS with step-like data can be found in \cite{B1995}.
	
	An IST approach for initial data  with $A_1=A_2$, $\phi_1\ne\phi_2$, and $B_1=B_2=0$ was presented in \cite{BK14}, and was further developed in \cite{BM16,BM17}. In particular, it was shown in \cite{BM16,BM17} that for such initial data, the long-time behavior is described by three asymptotic sectors in the $(x,t)$ half-plane $t>0$: two sectors adjacent to the half-axes $x<0$, $t=0$ and $x>0$, $t=0$ in which the solution asymptotes to modulated plane waves, and a middle sector in which the solution asymptotes to an elliptic (genus~$1$) modulated wave. The IST formalism for the case of {asymmetric} nonzero boundary conditions ($A_1\neq A_2$,  $B_1=B_2=0$) was presented in \cite{D14}. 
	
	In \cite{BLS21} (see also \cite{BV07}), the long-time asymptotics for the focusing NLS equation was studied for the so-called ``shock case'' characterized by $B_1=-B_2 > 0$ (which can be achieved, by a linear change of independent variables starting with any $B_1>B_2$), with  $A_1=A_2$ and $\phi_1=\phi_2$
	(notice that $B$ in the present paper corresponds to $-B$ in \cite{BLS21}). In this case, 
	a rich panorama of asymptotic scenarios  was discovered.
	Namely,  whereas the long-time
	behavior along the $t$-axis is always described by a genus~1 wave, the asymptotics along
	the lines $x/t = const$ can be either a genus~0, a genus~1, a genus~2,
	or a genus~3 wave depending on the value of $A_j/(B_2 - B_1)$.

	The long-time asymptotics in the case when the left background is zero (i.e., when $A_1=0$ and $A_2\neq 0$) was analyzed in \cite{BKS11}. It was shown that the asymptotic picture involves three sectors in this case: a slow decay sector (adjacent to the negative $x$-axis), a modulated plane wave sector (adjacent to the positive $x$-axis), and a modulated elliptic wave sector (between the first two).

	Returning to the nonlocal NLS equations \eqref{NNLS-2}, we notice that it satisfies the parity-time (PT) symmetry condition: $q(x,t)$ is a solution of the NNLS equations along with $\bar{q}(-x,-t)$ \cite{AM13}. 
	Therefore the NNLS equations are closely related to the theory of PT-symmetric and non-Hermitian systems, which describe various phenomena in both classical and quantum physics, see, e.g., \cite{BKM, B, EMKM18, GA16, KYZ, ZB} and references therein.
	The other examples of the nonlocal reductions of the two-component integrable systems can be found in, e.g., \cite{AM17, LQ17}.
	
	The study of problems with step-like initial data for nonlocal equations was initiated in \cite{RS21-DE}, in the case of the initial data satisfying the boundary conditions
	\begin{equation*}
		q_0(x)\simeq
		\begin{cases}
			0, & x\to -\infty,\\
			A & x\to \infty,\\
		\end{cases}
	\end{equation*}
	and assuming that this large-$x$ behavior is preserved for any $t$.
	Notice that in contrast with the local NLS equations, the behavior of a solution to the nonlocal NLS equation at one infinity cannot be prescribed independently of its behavior at another infinity. In particular,
	assuming the solution to satisfy
	\begin{equation}\label{bc}
		q(x,t)=\so(1),
		\quad x\to-\infty,\quad
		q(x,t)=Ae^{2\I Bx+2\I\omega t}+
		\so(1),
		\quad x\to\infty,
	\end{equation}
	for all $t\in\mathbb{R}$
	and substituting the ``limiting'' functions into the ``focusing'' NNLS equation (see \eqref{NNLS} below),
	the parameters in the second condition in \eqref{bc} have to satisfy
	$2B^2+\omega = 0$ and thus \eqref{bc} reduces to 
	\begin{equation}\label{bc2}
		q(x,t)=\so(1),
		\quad x\to-\infty,\quad
		q(x,t)=Ae^{2\I Bx-4\I B^2 t}+
		\so(1),
		\quad x\to\infty.
	\end{equation}

	In the present paper we consider the following Cauchy problem for the  nonlocal nonlinear Schr\"odinger  equation

	\begin{subequations}
		\label{IVP}
		\begin{align} \label{NNLS}
			& \I q_{t}(x,t)+q_{xx}(x,t)
			+2q^{2}(x,t)\bar{q}(-x,t)=0,
			\quad x,t\in\mathbb{R},\quad \I^2=-1, \\
			\label{IV}
			& q(x,0)=q_0(x), & &  x\in\mathbb{R}
		\end{align}
	\end{subequations}
	in the class of functions $q(x,t)$ satisfying the  boundary conditions \eqref{bc2}.
	Accordingly, the initial data $q_0(x)$ is assumed to be a (generalized) 
	step-like function:
	\begin{equation}\label{ic}
		q_0(x)=\so(1),\quad x\to-\infty,
		\quad
		q_0(x)=Ae^{2\I Bx}+\so(1),\quad
		x\to\infty.
	\end{equation}
	Notice that, in contrast with the local NLS equations, the ``focusing'' and ``defocusing'' variants of the NNLS equations \eqref{NNLS-2} with boundary conditions \eqref{bc2} with $B=0$
	exhibit quite similar properties (cf. \cite{RS20-JMAG} and \cite{RS21-CMP}).
	
	In \cite{RS21-DE, RS21-SIAM, RS21-CMP} the step-like problems for the NNLS equation were 
	considered in the case $B=0$ (concerning the NNLS with nonzero background see, e.g., \cite{ALM18, WT22, XXLH25, XLLLZ21} and references therein).
	Particularly, in \cite{RS21-CMP}
	it was shown that there is a variety of scenarios of the long-time behavior
	of solutions of step-like problems depending not only on the parameters
	of the step (as it takes place in the case of the local NLS equation), but also on details of the initial data, such that the ``location'' of the 
	step-like form of the initial function. The latter is because the NNLS equation 
	is not translation invariant, which is in contrast with their local counterparts.
	Consequently, considering the initial data close, in some sense (the exact meaning will be given in what follows), to the ``shifted step'' initial data
	\begin{equation}\label{shst}
		q_{0,R}(x)=
		\begin{cases}
			0 &x\leq R,\\
			Ae^{2\I Bx} &x>R,
		\end{cases}
	\end{equation}
	with some $R\geq0$, it is natural to expect that the resulting large-time behavior 
	of $q(x,t)$ will depend on $R$. 
	
	The main goal of this paper is to investigate the effect of non-zero $B$ on the asymptotic picture for $q(x,t)$ depending on $A$ and $R$. 
	The following proposition illustrates this effect in the case of pure step initial values:
	
	\begin{proposition}[Rough asymptotics for pure step initial values]
		\label{RA1}
		Consider the Cauchy problem \eqref{IVP} with boundary conditions \eqref{bc2}.
		Assume that the initial datum $q_0(x)$ has the form \eqref{shst} with $0<R<\frac{\pi}
		{2\left(4B^2+A^2\right)^{1/2}}$, 
		$0< 4|B|R\leq\pi$
		and that either (i) $4B^2-A^2<0$ or (ii) $4B^2-A^2>0$ and 
		$\pi^{-2}R^2(4B^2-A^2)$ is not a squared integer.
		Then we have the following rough (up  to decaying terms $\so(1)$) long-time asymptotic behavior of $q(x,t)$, as 
		$t\to\infty$, along the rays $\xi=\frac{x}{4t}=const$, depending on the sign of $B$ (see Figure \ref{fas}):
		\begin{enumerate}[1)]
			\item If $B>0$, then
			\begin{equation*}
				q(x,t)=
				\begin{cases}
					\so(1),&\xi<B,\,\xi\neq -B,\,\xi\neq0\\
					A\delta^2(-B,\xi)
					e^{2\I Bx-4\I B^2t}
					+\so(1),&\xi>B;
				\end{cases}
			\end{equation*}
			
			\item If $B<0$, then
			\begin{equation*}
				q(x,t)=
				\begin{cases}
					\so(1),&\xi<B,\\
					\frac{16AB^2\hat{\delta}^2(-B,\xi;B)
						e^{2\I Bx-4\I B^2t}}
					{16B^2
						-A^2\left(\overline{\hat{\delta}}\right)^2
						(-B,-\xi;B)
						\hat{\delta}^2(-B,\xi;B)
						e^{4\I Bx}}
					+\so(1),&0<|\xi|<-B,\\
					A\delta^2(-B,\xi)
					e^{2\I Bx-4\I B^2t}
					+\so(1),&\xi>-B;
				\end{cases}
			\end{equation*}
		\end{enumerate}
		Here $\delta(-B,\xi)$ and 
		$\hat{\delta}(-B,\xi;B)$ are given in \eqref{ddef} and \eqref{hat-del}, respectively, in terms of the associated spectral data.
		The asymptotic formula in item 2 for $0<|\xi|<-B$ holds uniformly in $x,t$ away from  arbitrarily small neighborhoods of possible zeros of the denominator.
		
		Moreover, in the case (ii), i.e., if $4B^2-A^2>0$ and 
		$\pi^{-2}R^2(4B^2-A^2)$ is not a squared integer, we  obtain the asymptotics along the ray $\xi=0$ for both $B>0$ and $B<0$, which is consistent with those in the sectors $0<|\xi|<|B|$ (see  Theorem \ref{ThCII0} below for details).
	\end{proposition}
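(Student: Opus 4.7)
The plan is to reduce the problem to a matrix Riemann--Hilbert (RH) problem associated to the Lax pair of \eqref{NNLS} and apply the Deift--Zhou nonlinear steepest descent method, adapted to the nonlocal structure of the equation with background \eqref{bc2}. The first step is to perform the direct scattering analysis for the explicit step datum \eqref{shst}. Because $q_{0,R}$ vanishes for $x\le R$ and equals the pure plane wave $Ae^{2\I Bx}$ for $x>R$, the Zakharov--Shabat-type spatial equation can be integrated in closed form on each of the two half-lines separately and the two solutions matched at $x=R$. This yields fully explicit formulas for the spectral (scattering) functions as elementary expressions in $A$, $B$, $R$, and the spectral parameter $k$. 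The quantitative restrictions stated in the proposition --- namely $0<R<\pi/(2(4B^2+A^2)^{1/2})$ together with $0<4|B|R\le\pi$ and the dichotomy (i)/(ii) --- are chosen precisely so that these explicit spectral functions are free of discrete spectrum and of spectral singularities on the arcs relevant for the RH problem; this is what guarantees that the analysis reduces to the ``continuous spectrum'' regime without soliton-type corrections.

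Second, I would set up and analyse the $(x,t)$-dependent RH problem by the standard steepest descent procedure. The controlling phase is $\theta(k,\xi)=k^2+2\xi k$, with a single stationary point at $k=-\xi$. Its signature table $\Ree(\I\theta)$ has, in addition to the stationary point, two distinguished ``background'' points $k=\pm B$ inherited from the oscillatory tail of \eqref{shst}. The asymptotic sectors correspond to different configurations of the stationary point relative to $\pm B$. In each configuration the jump matrix is refactorised through a Szeg\H{o}-type scalar function $\delta(k,\xi)$ (cf.\ \eqref{ddef}), the contour is then deformed so that all non-essential jumps become exponentially close to the identity, and the remaining model problem is solved explicitly. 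In the ``right'' sectors ($\xi>B$ in case~1 and $\xi>-B$ in case~2) only the background contribution at $k=-B$ survives and produces the formula $A\delta^2(-B,\xi)e^{2\I Bx-4\I B^2 t}$. In the ``left'' sectors of case~1 ($B>0$) the deformation pushes every exponential into its decay half-plane and the leading term is $\so(1)$. In the middle sector of case~2 ($B<0$, $0<|\xi|<-B$) both band points $k=-B$ and its mirror $k=B$ contribute simultaneously, and the resulting $2\times 2$ model RH problem admits a rational solution that reproduces the stated fractional expression, with $\hat\delta$ (see \eqref{hat-del}) absorbing the joint contribution of the two endpoints.

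The main obstacle is precisely this middle sector of case~2. Because of the nonlocal reduction $r(x,t)=\bar q(-x,t)$, the RH problem at $(x,t)$ is intertwined with the one at $(-x,t)$, and the second band point at $k=B$ --- beyond the ``natural'' one at $k=-B$ --- contributes to the leading behaviour; this is why $\overline{\hat\delta}(-B,-\xi;B)$ appears alongside $\hat\delta(-B,\xi;B)$. Isolating the correct model problem requires tracking the involution $\xi\mapsto-\xi$ through every stage of the steepest descent and verifying that the nonlocal symmetry is preserved by the small-norm approximations. The possible zeros of the denominator are exactly the ``resonant'' values of $(x,t)$ at which the model problem degenerates, and the uniformity statement reflects the usual fact that error estimates survive away from arbitrarily small neighbourhoods of these zeros. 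Finally, the ray $\xi=0$ under hypothesis~(ii) demands a separate model problem in which the stationary point coincides with the fixed point $k=0$ of the nonlocal involution; the arithmetic condition that $\pi^{-2}R^2(4B^2-A^2)$ is not a squared integer removes the accidental resonance at this double critical point, so that the Szeg\H{o} factors extend continuously across $\xi=0$ and match the formula from the adjacent sector, as made precise in Theorem~\ref{ThCII0}.
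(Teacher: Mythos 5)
Your overall strategy is the same as the paper's: compute the scattering data for \eqref{shst} in closed form (they are the elementary functions \eqref{spss}), then run the Deift--Zhou steepest descent around the stationary point $k=-\xi$, using a Szeg\H{o}-type function $\delta$ in the outer sectors and, in the middle sector for $B<0$, a modified function $\hat\delta$ that handles the vanishing of $1+r_1r_2$ at one of the points $\pm B$, ending with an explicitly solvable rational model problem with poles at $k=\pm B$ which produces the quotient formula. That matches Sections 2.3 and 3.2--3.3 of the paper in outline.

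However, there is a genuine gap in how you use the hypotheses. Your claim that the restrictions on $R$ and the dichotomy (i)/(ii) make the spectral functions ``free of discrete spectrum'' is false in case (i): by Proposition \ref{a1Zs}, item 2.1), whenever $4B^2-A^2<0$ the function $a_1$ has a simple purely imaginary zero $k=\I k_0$ (the solution of \eqref{tre}), no matter how small $R>0$ is. The condition $0<R<\pi/(2(4B^2+A^2)^{1/2})$ only rules out zeros off the imaginary axis, and the arithmetic condition in (ii) rules out the \emph{real} zeros $k=\pm\tfrac12(4B^2-A^2)^{1/2}$ of Proposition \ref{a1Zs}, item 1.1) (spectral singularities lying inside $(-|B|,|B|)\setminus\{0\}$), not a ``resonance at the double critical point $k=0$'' as you assert. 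Consequently your argument omits the step that actually handles case (i): in the paper the residue condition \eqref{resinI} at $\I k_0$ is shown to be asymptotically negligible along every ray $\xi\neq0$ (after using \eqref{sol}--\eqref{sol1} to reduce to one sign of $x$), and this, together with the fact that in case (i) the winding of $\arg(a_1a_2)$ over $(-|B|,0)$ equals $\pi$ so that $\Imm\,\nu(0)=-\tfrac12$ (see \eqref{aa1g}, \eqref{nu}), is precisely why the statement excludes the ray $\xi=0$ under (i) while allowing it under (ii). Your proposal, taken at face value, would either claim the asymptotics at $\xi=0$ in both cases or leave the sector boundaries unjustified. Relatedly, you never verify the argument/winding conditions (Assumptions A, Cases I--II with $n=0$, i.e.\ $\Imm\,\nu(-\xi)\in(-\tfrac12,\tfrac12)$) that make the local parametrix and the vanishing lemma on the cross applicable; this is the actual content of Proposition \ref{a1Ws} and of the hypothesis $0<4|B|R\leq\pi$, which your proof never invokes. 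Finally, the asymmetry between $B>0$ (decay in $|\xi|<B$) and $B<0$ (bounded oscillations in $|\xi|<-B$) should be pinned to the concrete mechanism: the on-contour zero of $1+r_1r_2$ sits at the singular point $k=-B$ of $M$ when $B>0$, so $\hat\delta(k,\xi;-B)$ removes the pole and no residue survives, whereas for $B<0$ it sits at the mirror point $k=B$ and one is left with residue conditions at both $\pm B$; your ``push every exponential into its decay half-plane'' explanation does not capture this and would not by itself distinguish the two cases.
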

	\begin{remark}
		Notice that the asymptotics in the middle sector in Proposition \ref{RA1}, item 2),
		is described in terms of  periodic elementary functions.
		This  contrasts with the conventional NLS equations, where the asymptotics in sectors between the plane wave sectors exhibits more complicated behavior involving theta functions of various genera \cite{BLS21, BV07, J2015}.
	\end{remark}
	\begin{figure}[h]
		\begin{minipage}[h]{0.48\linewidth}
			\centering{\includegraphics[width=0.99\linewidth]{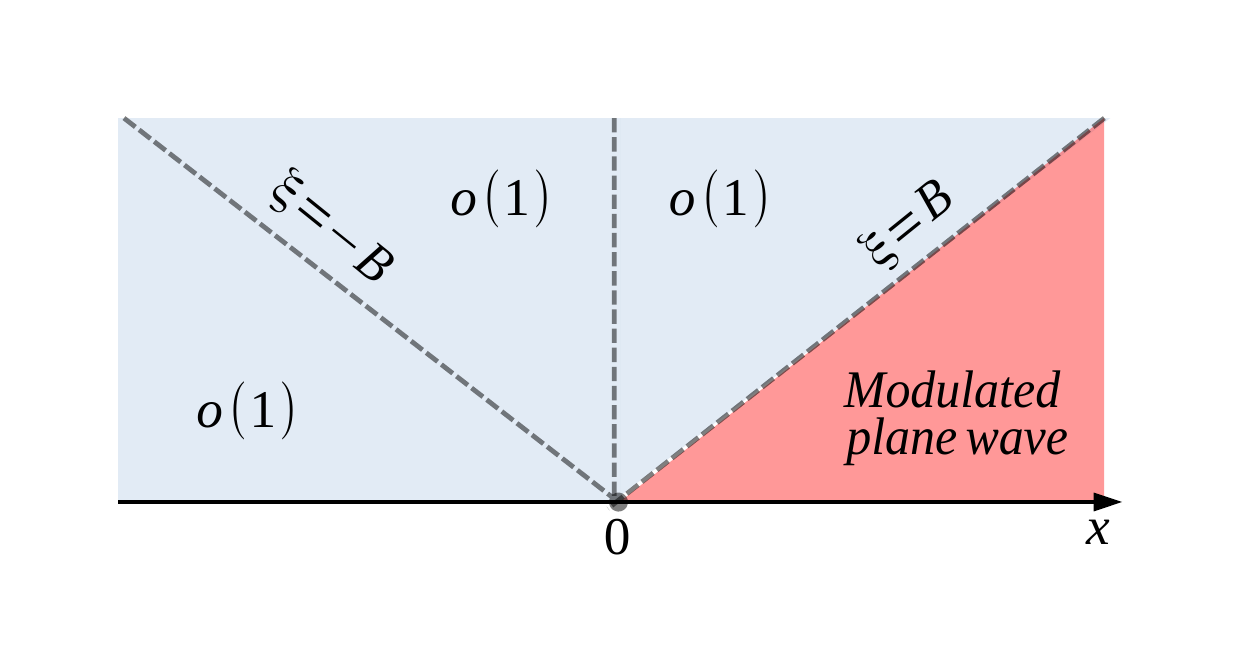}}
		\end{minipage}
		\hfill
		\begin{minipage}[h]{0.48\linewidth}
			\centering{\includegraphics[width=0.99\linewidth]{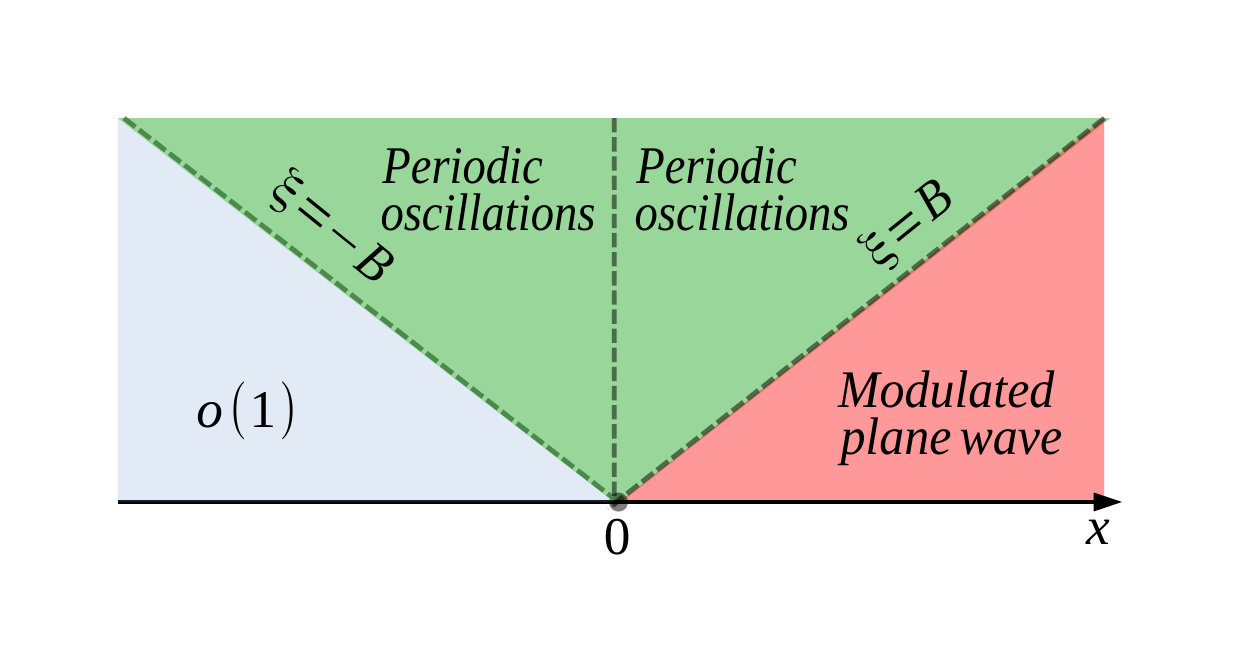}}
		\end{minipage}
		\caption{Asymptotics of the pure step initial data, described in Proposition \ref{RA1}.
			The left figure illustrates the case $B>0$, and the right corresponds to $B<0$.}
		\label{fas}
	\end{figure}
	
	The paper is organized as follows. Our analysis is based on a RH formalism of the IST method, which is
	developed in Section 2. A detailed account of the case of   pure 
	step initial data \eqref{shst} is given in Subsection 2.3, where 
	special attention is payed to the location of zeros of the dedicated  
	spectral function and the associated winding of its argument. These properties 
	are then used in the formulation of assumptions which characterize the ``closeness''
	of  general initial data to a particular pure step one in the sense that 
	the associated spectral data share similar properties.
	In Section 3, we analyze the long-time behavior of the solution of
	the Cauchy problem \eqref{IVP} with boundary conditions \eqref{bc2}.
	Particularly, in Subsections \ref{S1}--\ref{S2}, we present a detailed proof of the 
	asymptotics of the solution with the initial data close to the pure step considered in Proposition \ref{RA1} (i.e., with relatively small $R>0$).
	Referring to the rigorous asymptotic analysis presented in \cite{RS21-DE, RS21-SIAM}, we specify the decaying terms in the asymptotics and obtain Theorems \ref{ThCII0}, \ref{ThCI0}.
	In Concluding Remarks (Section \ref{CR}) we present the long-time asymptotic formulas
	in the case of the initial data close to pure step \eqref{shst} with a general value of $R>0$, see Proposition \ref{pr1}.
	Finally, Appendices \ref{Ap1} and \ref{Ap2} present
	details on deriving properties of the spectral function $a_1$ stated in Propositions \ref{a1Zs} and \ref{a1Ws}.
	
	\bigskip
	
	\noindent\textbf{Notations.} $I$ is the $2\times 2$ identity matrix;
	$\mathbf{0}_{2\times2}$ denotes the zero $2\times2$ matrix;
	$\sigma_1=\left(\begin{smallmatrix} 0& 1\\ 1 & 0\end{smallmatrix}\right)$ and 
	$\sigma_3=\left(\begin{smallmatrix} 1& 0\\ 0 & -1\end{smallmatrix}\right)$ are the Pauli matrices;
	$\C^\pm = \{k: \pm\Imm\,k>0\}$;
	$\overline{\C^\pm}=\C^\pm\cup\R$; 
	the set of numbers 
	$\{z_i\}_{i=n_0}^{n_1}$, $n_0,n_1\in\Z$ is empty if 
	$n_1<n_0$ and $\prod\limits_{s=m_1}^{m_2}F_s=1$ if $m_1>m_2$;
	$X^{(i)}$ is the $i$-th column of a matrix $X$;
	$X_{i,j}$ is the $i,j$-th entry of a matrix $X$.
	For any $a\in\R$ we write 
	$k\to a\pm\I0$ if 
	$k\to a$ and $k\in\C^\pm$.
	
	\section{Inverse scattering transform and the Riemann-Hilbert problem}
	\subsection{Eigenfunctions}
	Being a particular case of the AKNS system,
	the NNLS equation \eqref{NNLS} has a Lax pair, which reads
	\begin{equation}
		\label{LP}
		\begin{split}
			&\Phi_{x}+\I k\sigma_{3}\Phi=U(x,t)\Phi,\\
			&\Phi_{t}+2\I k^{2}\sigma_{3}\Phi=V(x,t,k)\Phi,
		\end{split}
	\end{equation}
	where $\Phi(x,t,k)$ is a $2\times2$ matrix-valued function, $k\in\mathbb{C}$ is an auxiliary (spectral) parameter and the matrix coefficients $U(x,t)$ and $V(x,t,k)$ are given in terms of a function $q(x,t)$:	
	\begin{equation}
		\label{U}
		U(x,t)=\begin{pmatrix}
			0& q(x,t)\\
			-\bar{q}(-x,t)& 0\\
		\end{pmatrix},
	\end{equation}
	\begin{equation}
		\label{V}
		V(x,t,k)=\begin{pmatrix}
			\tilde{A}& \tilde{B}\\
			\tilde{C}& -\tilde{A}\\
		\end{pmatrix},
	\end{equation}
	with $\tilde{A}=\I q(x,t)\bar{q}(-x,t)$, $\tilde{B}=2kq(x,t)+\I q_{x}(x,t)$, $\tilde{C}=-2k\bar{q}(-x,t)+\I (\bar{q}(-x,t))_{x}$.
	Direct calculations show that the compatibility condition of \eqref{LP}
	(here $[X,Y]=XY-YX$ is the matrix commutator)
	\begin{equation}\label{cc}
		U_t-V_x
		+[U-\I k\sigma_3,V-2\I k^2\sigma_3]=0,
	\end{equation}
	is equivalent to \eqref{NNLS}.
	
	Introduce $U_\pm$ and $V_\pm$ as the limits of $U$ and $V$, respectively, as $x\to\pm\infty$ with the fixed $t,k$:
	\begin{equation*}
		U(x,t)=U_{\pm}(x,t)+\so(1),\,\,
		x\to\pm\infty,\quad
		V(x,t,k)=V_{\pm}(x,t,k)+\so(1),\,\,
		x\to\pm\infty,\quad
	\end{equation*}
	Taking into account boundary conditions \eqref{bc2},
	we conclude from \eqref{U}, \eqref{V} that
	$U_\pm$ and $V_\pm$ have the following form:
	\begin{align}
		\label{Upm}
		&U_+(x,t)=
		\begin{pmatrix}
			0 & Ae^{2\I Bx-4\I B^2 t}\\
			0 & 0
		\end{pmatrix},
		&&U_-(x,t)=
		\begin{pmatrix}
			0 & 0\\
			-Ae^{2\I Bx+4\I B^2 t} & 0
		\end{pmatrix},\\
		\nonumber
		&V_+(x,t,k)=
		\begin{pmatrix}
			0 & 2A(k-B)e^{2\I Bx-4\I B^2 t}\\
			0 & 0
		\end{pmatrix},
		&&V_-(x,t,k)=
		\begin{pmatrix}
			0 & 0\\
			-2A(k+B)e^{2\I Bx+ 4\I B^2 t} & 0
		\end{pmatrix}.
	\end{align}

	
	
	
	Introduce the following functions:
	\begin{equation}
		\label{Phipm}
		\Phi_{\pm}(x,t,k)=
		e^{(\pm \I Bx-2\I B^2 t)\sigma_3}N_{\pm}(k)
		e^{-(\I(k\pm B)x+2\I(k^2-B^2)t)\sigma_3},
	\end{equation}
	with
	\begin{equation}
		\label{Npm}
		N_+(k)=
		\begin{pmatrix}
			1 & \frac{-\I A}{2(k+B)}\\
			0 & 1
		\end{pmatrix},
		\quad
		N_-(k)=
		\begin{pmatrix}
			1 & 0\\
			\frac{-\I A}{2(k-B)} & 1
		\end{pmatrix}.
	\end{equation}
	Then direct calculations show that $\Phi_\pm$
	solves \eqref{LP} with  $U$ and $V$ replaced by  $U_{\pm}$ and $V_{\pm}$  respectively.
	
	Define (formally) the $2\times2$ matrix-valued functions $\Psi_j(x,t,k)$, $j=1,2$, as the solutions of the following Volterra integral equations:
	\begin{equation}
		\label{Psi1}
		\begin{split}
			\Psi_1(x,t,k)=&\,
			e^{(-\I Bx-2\I{B^2} t)\sigma_3}N_-(k)\\
			&+
			\int_{-\infty}^{x}G_-(x,y,t,k)
			\left(U-U_-\right)(y,t)
			\Psi_1(y,t,k)e^{\I(k-B)(x-y)\sigma_3}\,dy,
		\end{split}
	\end{equation}
	\begin{equation}
		\label{Psi2}
		\begin{split}
			\Psi_2(x,t,k)=&\,
			e^{(\I Bx-2\I{B^2} t)\sigma_3}N_+(k)\\
			&-
			\int^{\infty}_{x}G_+(x,y,t,k)
			\left(U-U_+\right)(y,t)
			\Psi_2(y,t,k)e^{\I(k+B)(x-y)\sigma_3}\,dy,
		\end{split}
	\end{equation}
	where $N_\pm$ and $U_\pm$ are given in \eqref{Npm} and \eqref{Upm}, respectively, and
	(see \eqref{Phipm})
	\begin{equation*}
		G_\pm(x,y,t,k)=
		\Phi_\pm(x,t,k)
		\Phi_\pm^{-1}(y,t,k).
	\end{equation*}
	The functions $\Psi_j(x,t,k)$, $j=1,2$, will be the main ingredients of the basic RH problem (see below).
	
	Let us establish the connection between $\Psi_j$, $j=1,2$, and the solutions of Lax pair \eqref{LP}.
	Also we obtain important analytical and symmetry properties of matrices $\Psi_j$, $j=1,2$.
	Here and below we will denote by $X^{(i)}$ the $i$-th column of matrix $X$, and $\mathbb{C}^{\pm}=\left\{k\in\mathbb{C}\,|\pm
	\mathrm{Im} k>0\right\}$.
	\begin{proposition}
		\label{PPsi}
		Assume that $xq(x,t)\in L^1(-\infty,a)$ and
		$\left(
		q(x,t)-Ae^{2\I B(\cdot)-4\I{B^2} t}
		\right)\in L^1(a,\infty)$ with respect to the spatial variable $x$, for all fixed $t\in\mathbb{R}$ and $a\in\mathbb{R}$.
		Then the matrices $\Psi_j(x,t,k)$, $j=1,2$, given in \eqref{Psi1} and \eqref{Psi2}, have the following properties:
		\begin{enumerate}
			
			\item the functions $\Phi_j(x,t,k)$, $j=1,2$, defined by
			\begin{equation}
				\label{PhPs}
				\Phi_j(x,t,k)=
				\Psi_j(x,t,k)e^{-(\I (k+(-1)^jB)x+2\I(k^2{-B^2})t)\sigma_3},
				\quad k\in\mathbb{R}\setminus \{(-1)^{j+1}B\},\, j=1,2,
			\end{equation}
			are the (Jost) solutions of Lax pair \eqref{LP}, which satisfy the following boundary conditions for all fixed $t,k\in\mathbb{R}$ (recall \eqref{Phipm}):
			\begin{equation*}
				\begin{split}
					&\Phi_1(x,t,k)=\Phi_-(x,t,k)+\so(1),
					\quad x\rightarrow-\infty,\\
					&\Phi_2(x,t,k)=\Phi_+(x,t,k)+\so(1),
					\quad x\rightarrow\infty;
				\end{split}
			\end{equation*}
			
			\item the columns $\Psi_1^{(1)}(x,t,k)$ and $\Psi_2^{(2)}(x,t,k)$ are analytic in $\mathbb{C}^+$ and 
			\begin{equation}\label{Psas2}
				\begin{split}
					&\Psi_1^{(1)}(x,t,k)=
					\begin{pmatrix}
						1\\
						0\end{pmatrix}
					+\mathcal{O}\left(k^{-1}\right),\quad k\in\mathbb{C}^+,\,
					k\rightarrow\infty,\\
					&\Psi_2^{(2)}(x,t,k)=
					\begin{pmatrix}
						0\\
						1\end{pmatrix}
					+\mathcal{O}\left(k^{-1}\right),\quad k\in\mathbb{C}^+,\,
					k\rightarrow\infty;
				\end{split}
			\end{equation}
			
			\item the columns $\Psi_1^{(2)}(x,t,k)$ and $\Psi_2^{(1)}(x,t,k)$ are analytic in $\mathbb{C}^-$ and 
			\begin{equation}\label{Psas1}
				\begin{split}
					&\Psi_1^{(2)}(x,t,k)=
					\begin{pmatrix}
						0\\
						1\end{pmatrix}
					+\mathcal{O}\left(k^{-1}\right),\quad k\in\mathbb{C}^-,\,
					k\rightarrow\infty,\\
					&\Psi_2^{(1)}(x,t,k)=
					\begin{pmatrix}
						1\\
						0\end{pmatrix}
					+\mathcal{O}\left(k^{-1}\right),\quad
					k\in\mathbb{C}^-,\,
					k\rightarrow\infty;
				\end{split}
			\end{equation}
			
			\item $\Psi_j(x,t,k)$, $j=1,2$, satisfy the following symmetry relation:
			\begin{equation}
				\label{Psymm}
				\sigma_1\overline{\Psi_1}(-x,t,-k)\sigma_1^{-1}
				=\Psi_2(x,t,k),\,\,k\in\mathbb{R}\setminus\{B,-B\},
			\end{equation}
			where $\sigma_1=\bigl(\begin{smallmatrix}0& 1\\1 & 0\end{smallmatrix}\bigl)$ is the first Pauli matrix;
			
			\item $\Psi_1(x,t,k)$ and $\Psi_2(x,t,k)$
			have the following asymptotic expansion at the points $k=B$ and $k=-B$, respectively:
			\begin{equation}\label{Psi12B}
				\begin{split}
					&\Psi_1^{(1)}(x,t,k)=
					\begin{pmatrix}\frac{v_1(x,t)}{k-B}\\\frac{v_2(x,t)}{k-B}
					\end{pmatrix}+\mathcal{O}(1),\,
					\Psi_1^{(2)}(x,t,k)=
					\frac{2\I}{A}
					\begin{pmatrix}v_1(x,t)\\ v_2(x,t)\end{pmatrix}
					+\mathcal{O}(k-B),\,k\to B,
					\\
					&\Psi_2^{(1)}(x,t,k)=-\frac{2\I}{A}
					\begin{pmatrix}
						\overline{v_2}(-x,t)\\ \overline{v_1}(-x,t)
					\end{pmatrix}+\mathcal{O}(k+B),\,
					\Psi_2^{(2)}(x,t,k)=
					\begin{pmatrix}
						\frac{-\overline{v_2}(-x,t)}{k+B}\\
						\frac{-\overline{v_1}(-x,t)}{k+B}
					\end{pmatrix}+\mathcal{O}(1),\,k\to -B,
				\end{split}
			\end{equation}
			where $v_j(x,t)$, j=1,2, solve the following linear Volterra integral equations:
			\begin{equation}\label{v1v2}
				\begin{split}
					&v_1(x,t)=\int_{-\infty}^{x}
					e^{-\I B(x-y)}q(y,t)v_2(y,t)\,dy,\\
					&v_2(x,t)=
					-\I\frac{A}{2}e^{\I Bx+2\I{B^2} t}
					+e^{\I Bx}\int_{-\infty}^{x}
					\left(
					Ae^{\I By}(e^{4\I{B^2} t}-1)
					-e^{-\I By}\bar{q}(-y,t)
					\right)v_1(y,t)\,dy;
				\end{split}
			\end{equation}
			
			\item $\det\Psi_j(x,t,k)=1$,$\quad$ $x,t\in\mathbb{R}$, 
			$k\in\mathbb{R}\setminus\{B,-B\}$, 
			$j=1,2$.
		\end{enumerate}
	\end{proposition}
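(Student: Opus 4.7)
The overall strategy is the standard Jost-function framework adapted to the nonlocal, shifted-step setting, with items (1), (2), (3), (6) being essentially routine consequences of the Volterra equations \eqref{Psi1}, \eqref{Psi2}, item (4) an algebraic symmetry check, and the genuinely new content concentrated in item (5). For item (1), I would differentiate \eqref{Psi1} and \eqref{Psi2} in $x$ and $t$ and verify directly that the conjugated function $\Phi_j$ defined by \eqref{PhPs} satisfies both equations of the Lax pair \eqref{LP}; the stated boundary conditions then follow because the Volterra integrals vanish as $x\to\mp\infty$ and the inhomogeneous prefactor reduces precisely to $\Phi_\pm(x,t,k)$ from \eqref{Phipm}.

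For items (2), (3) and (6), observe that the kernel $G_\pm(x,y,t,k)(U-U_\pm)(y,t)e^{\I(k\pm B)(x-y)\sigma_3}$ contains off-diagonal oscillatory factors of the form $e^{\pm 2\I(k\pm B)(x-y)}$ which are bounded on the respective integration ranges for $k$ in the half-planes advertised in the statement. Together with the hypotheses $xq\in L^1(-\infty,a)$ (needed to absorb the $1/(k\mp B)$ pole of $N_\pm$ into an integrable kernel near the lower endpoint) and $q-Ae^{2\I B\cdot-4\I B^2 t}\in L^1(a,\infty)$, this ensures uniform convergence of the Neumann series for the indicated columns on compact subsets of $\mathbb{C}^\pm\setminus\{\pm B\}$, giving analyticity; the $k\to\infty$ asymptotics \eqref{Psas2}, \eqref{Psas1} then follow by dominated convergence from the decay of the oscillatory factors. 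For item (6), the Lax pair has trace-free coefficients, so $\det\Phi_j$ is independent of $(x,t)$; evaluation at the appropriate spatial infinity together with $\det N_\pm(k)=1$ yields $\det\Psi_j\equiv 1$.

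For the symmetry (item 4), I would substitute $(x,k)\mapsto(-x,-k)$ into \eqref{Psi1}, take complex conjugate, change variables $y\mapsto -y$ in the integral, and conjugate the resulting matrix equation by $\sigma_1$. Three algebraic identities make everything align: $\sigma_1\overline{N_-(-k)}\sigma_1=N_+(k)$ and $\sigma_1\overline{U_-(-y,t)}\sigma_1=U_+(y,t)$ are immediate from \eqref{Npm}, \eqref{Upm}, while the nonlocal identity $\sigma_1\overline{(U-U_-)(-y,t)}\sigma_1=(U-U_+)(y,t)$ follows from \eqref{U} and the form of $U_\pm$. A short computation shows that the transformed equation is precisely the Volterra equation \eqref{Psi2} for $\Psi_2(x,t,k)$, and uniqueness of solutions to Volterra equations yields \eqref{Psymm}.

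The main technical point is item (5). The factors $\frac{-\I A}{2(k-B)}$ in $N_-(k)$ and $\frac{-\I A}{2(k+B)}$ in $N_+(k)$ force simple poles in $\Psi_1$ at $k=B$ and in $\Psi_2$ at $k=-B$; the other columns exhibited in \eqref{Psi12B} turn out to be regular there. I would posit the ansatz $\Psi_1^{(1)}(x,t,k)=\frac{V(x,t)}{k-B}+W(x,t)+\mathcal{O}(k-B)$ and $\Psi_1^{(2)}(x,t,k)=Z(x,t)+\mathcal{O}(k-B)$, substitute into \eqref{Psi1}, and match orders of $(k-B)$. The pole of the inhomogeneous term $e^{(-\I Bx-2\I B^2 t)\sigma_3}N_-(k)$ combined with the pole contribution of the kernel $G_-$ (which itself has a simple pole at $k=B$ arising from $N_-^{-1}$) determines $V$ and reveals the rigid linkage $Z=\frac{2\I}{A}V$ consistent with \eqref{Psi12B}; the regular part then yields a closed linear Volterra system for the two components $v_1,v_2$ of $V$ which, after expanding the matrix products and collecting phases, becomes exactly \eqref{v1v2}. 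The expansion of $\Psi_2$ near $k=-B$ in the second line of \eqref{Psi12B} follows immediately by applying the symmetry \eqref{Psymm} to the expansion of $\Psi_1$ near $k=B$. The main obstacle is precisely this bookkeeping near $k=\pm B$: tracking the cancellations between the poles of $N_-(k)$, the poles of $G_-(x,y,t,k)$, and the zero of $e^{-\I(k-B)(x-y)\sigma_3}-I$, and in particular producing the $Ae^{\I By}(e^{4\I B^2 t}-1)$ contribution in the equation for $v_2$, which arises from the mismatch between the $t$-independent matrix $N_-(k)$ and the $t$-dependent asymptotic potential $U_-(x,t)$, is the most error-prone step of the argument.
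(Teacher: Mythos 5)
Your proposal follows essentially the same route as the paper: items (1)--(3) and (6) are treated as direct consequences of the Volterra equations \eqref{Psi1}--\eqref{Psi2} and of the traceless/Liouville argument, item (4) by transforming \eqref{Psi1} under $(x,k)\mapsto(-x,-k)$, conjugating by $\sigma_1$ and invoking uniqueness of Volterra solutions, and item (5) by positing the expansion at $k=B$, matching powers of $(k-B)$, and transferring the result to $\Psi_2$ at $k=-B$ via \eqref{Psymm} --- exactly the paper's scheme. Two inaccuracies in your write-up should be corrected, though neither invalidates the approach. First, the symmetry identities you call immediate carry a sign: from \eqref{U} and \eqref{Upm} one has $\sigma_1\overline{U}(-x,t)\sigma_1^{-1}=-U(x,t)$ and hence $\sigma_1\overline{(U-U_-)}(-y,t)\sigma_1^{-1}=-(U-U_+)(y,t)$; this minus sign is precisely what produces the minus in front of the integral in \eqref{Psi2}, so with the identities as you stated them the transformed equation would not coincide with \eqref{Psi2}. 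Second, $G_-(x,y,t,k)$ does \emph{not} have a pole at $k=B$: the poles of $N_-(k)$ and $N_-^{-1}(k)$ cancel inside $N_-e^{-\I(k-B)(x-y)\sigma_3}N_-^{-1}$, whose $(2,1)$ entry is proportional to $\sin\bigl((k-B)(x-y)\bigr)/(k-B)$, i.e.\ $G_-$ is $\mathcal{O}(1)$ at $k=B$ with a limit growing linearly in $(x-y)$ (this growth is the actual reason for the first-moment hypothesis $xq\in L^1$). Accordingly, in item (5) the $(k-B)^{-1}$ contributions come only from the entry $\frac{-\I A}{2(k-B)}$ of $N_-(k)$ and from the pole of $\Psi_1^{(1)}(y,t,k)$ itself under the integral; matching them yields first the system \eqref{v1v2b} containing the extra $(x-y)$-kernel, and, as in the paper, one must differentiate its second equation in $x$ to arrive at the stated form \eqref{v1v2}.
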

	\begin{proof}
		Items (1)--(3) follow directly from the definitions of $\Psi_j$, $j=1,2$, given in \eqref{Psi1}, \eqref{Psi2}.
		
		Item (4) follows from the symmetry relations
		$$
		\sigma_1 e^{a\sigma_3}\sigma_1^{-1}
		=e^{-a\sigma_3},\quad
		\sigma_1 \overline{N_-}(-k)\sigma_1^{-1}
		=N_+(k),\quad
		\sigma_1 \overline{U}(-x,t)\sigma_1^{-1}
		=-U(x,t).
		$$
		
		Let us prove item (5).
		Assume that $\Psi_1(x,t,k)$ has the following asymptotic behavior as $k\to B$:
		\begin{equation}\label{Psi1b}
			\Psi_1^{(1)}(x,t,k)=\begin{pmatrix}
				\frac{v_1(x,t)}{k-B}\\ \frac{v_2(x,t)}{k-B}
			\end{pmatrix}+\mathcal{O}(1),\quad
			\Psi_1^{(2)}(x,t,k)=\begin{pmatrix}w_1(x,t)\\ w_2(x,t)\end{pmatrix}
			+\mathcal{O}(k-B),\,k\to B.\\
		\end{equation}
		From symmetry \eqref{Psymm} we conclude that the expansion of $\Psi_2(x,t,k)$ as $k\to-B$ is given by
		\begin{equation}\label{Psi2b}
			\Psi_2^{(1)}(x,t,k)=
			\begin{pmatrix}
				\overline{w_2}(-x,t)\\ \overline{w_1}(-x,t)
			\end{pmatrix}+\mathcal{O}(k+B),\,
			\Psi_2^{(2)}(x,t,k)=
			\begin{pmatrix}
				\frac{-\overline{v_2}(-x,t)}{k+B}\\
				\frac{-\overline{v_1}(-x,t)}{k+B}
			\end{pmatrix}+\mathcal{O}(1),\,k\to -B.
		\end{equation}
		
		Observe that $G_-(x,y,t,k)$ can be written in the following form:
		\begin{equation*}
			G_-(x,y,t,k)=
			\begin{pmatrix}
				e^{-\I B(x-y)}&0\\
				-A(x-y)e^{\I B(x+y)}&e^{\I B(x-y)}
			\end{pmatrix}
			+\mathcal{O}(k-B),\quad
			k\to B.
		\end{equation*}
		Comparing the coefficients of $(k-B)^{-1}$ for the first column in \eqref{Psi1}, we conclude that
		$v_j(x,t)$, $j=1,2$,
		satisfy the following system of integral equations:
		\begin{equation}\label{v1v2b}
			\begin{split}
				&v_1(x,t)=\int_{-\infty}^x
				e^{-\I B(x-y)}q(y,t)v_2(y,t)\,dy,\\
				&v_2(x,t)e^{-\I Bx}=
				-\I\frac{A}{2}e^{2\I{B^2} t}
				+\int_{-\infty}^x
				\left(
				Ae^{\I By+4\I{B^2} t}
				-e^{-\I By}\bar{q}(-y,t)
				\right)
				v_1(y,t)\,dy
				\\
				&\qquad\qquad\qquad\,\,\,
				-A\int_{-\infty}^x(x-y)
				e^{\I By}q(y,t)v_2(y,t)\,dy.
			\end{split}
		\end{equation}
		Taking the derivative in $x$ of the second equation in \eqref{v1v2b}, we obtain
		\begin{equation*}
			\px\left(v_2(x,t)e^{-\I Bx}\right)
			=\left(
			Ae^{\I Bx}
			\left(e^{4\I{B^2} t}-1\right)
			-e^{-\I Bx}\bar{q}(-x,t)
			\right)
			v_1(x,t),
		\end{equation*}
		which, together with \eqref{v1v2b}, implies \eqref{v1v2}.
		
		Then equating the coefficients of $(k-B)^0$ for the second column in \eqref{Psi1}, we obtain
		\begin{equation}\label{w1w2}
			\begin{split}
				&w_1(x,t)=\int_{-\infty}^x
				e^{-\I B(x-y)}q(y,t)w_2(y,t)\,dy,\\
				&w_2(x,t)e^{-\I Bx}=
				e^{2\I{B^2} t}
				+\int_{-\infty}^x
				\left(
				Ae^{\I By+4\I{B^2} t}
				-e^{-\I By}\bar{q}(-y,t)
				\right)
				w_1(y,t)\,dy\\
				&\qquad\qquad\qquad\,\,\,
				-A\int_{-\infty}^x(x-y)
				e^{\I By}q(y,t)w_2(y,t)\,dy.
			\end{split}
		\end{equation}
		Combining \eqref{v1v2b} and \eqref{w1w2}, we arrive at
		\begin{equation*}
			\begin{pmatrix}
				w_1(x,t)\\w_2(x,t)
			\end{pmatrix}=
			\frac{2\I}{A}
			\begin{pmatrix}
				v_1(x,t)\\v_2(x,t)
			\end{pmatrix},
		\end{equation*}
		which, together with \eqref{Psi1b} and \eqref{Psi2b}, implies \eqref{Psi12B}.

		Finally, to prove item (6). 
		Observe that
		the matrices in the right-hand side of \eqref{LP} are traceless. Thus, using the Liouville theorem, we conclude that
		$$
		\det\Psi_j(x,t,k)=
		\lim\limits_{y\to(-1)^j\infty}
		\det\Psi_j(y,t,k),\,\,
		x,t\in\mathbb{R},\,
		k\in\mathbb{R}\setminus\{B,-B\},\,
		j=1,2.
		$$
		Using expansions \eqref{Psi12B}, we have that
		$\det\Psi_j(x,t,k)=\mathcal{O}(1)$, 
		as $k\to\pm B$, $j=1,2$.
		Thus, we establish item (6) for all $k\in\mathbb{R}$.
	\end{proof}
	
	\subsection{Scattering data}
	Since the functions $\Phi_1(x,t,k)$ and $\Phi_2(x,t,k)$ satisfy both equations in system \eqref{LP}, we have the following relation for $\Phi_1$ and $\Phi_2$:
	\begin{equation}
		\label{S}
		\Phi_1(x,t,k)=\Phi_2(x,t,k)S(k),\quad 
		k\in\mathbb{R}\setminus\{B,-B\},
	\end{equation}
	for all $x,t\in\mathbb{R}$.
	Taking into account items (1) and (4) in Proposition \ref{PPsi}, we conclude that $\Phi_j$, $j=1,2$, satisfy the following symmetry relation:
	\begin{equation}\label{phi-sym}
		\sigma_1
		\overline{\Phi_1}(-x,t,-k)
		\sigma_1^{-1}=\Phi_2(x,t,k),\quad k\in\mathbb{R}\setminus\{B,-B\}.
	\end{equation}
	Using \eqref{phi-sym}, we can write
	the scattering matrix $S(k)$ in the following form:
	\begin{equation}\label{Ssp}
		S(k)=
		\begin{pmatrix}
			a_1(k)& -\bar{b}(-k)\\
			b(k)& a_2(k)
		\end{pmatrix},\quad k\in\mathbb{R}\setminus\{B,-B\},
	\end{equation}
	with some $b(k)$, $a_1(k)$ and $a_2(k)$.
	
	
	Relation \eqref{S} yields that the spectral functions $a_j(k)$, $j=1,2$, and $b(k)$ can be defined in terms of the following determinants:
	\begin{subequations}\label{sd}
		\begin{align}
			\label{sda}
			&a_1(k)=\det\left(
			\Psi_1^{(1)}(0,0,k),\Psi_2^{(2)}(0,0,k)
			\right),
			\quad k\in\overline{\C^+}
			\setminus\{B,-B\},\\
			&a_2(k)=\det\left(
			\Psi_2^{(1)}(0,0,k),\Psi_1^{(2)}(0,0,k)
			\right),
			\quad k\in\overline{\C^-},\\
			\label{sdb}
			&b(k)=\det\left(
			\Psi_2^{(1)}(0,0,k),\Psi_1^{(1)}(0,0,k)
			\right),
			\quad k\in\R\setminus\{B\},
		\end{align}
	\end{subequations}
	where we have used that $\Psi_j(0,0,k)=\Phi_j(0,0,k)$.
	Notice that \eqref{sd} implies that the spectral data can be found in terms of the known initial data $q_0$ only.
	In view of \eqref{Psi12B}, we have the following expansions of $a_1(k)$ and $b(k)$ as $k\to B$ and/or $k\to-B$:
	\begin{equation}\label{a1pmB}
		a_1(k)=\frac{a_1^{\pm B}}{k\mp B}+\mathcal{O}(1),\quad k\to\pm B,\quad
		b(k)=\frac{b^B}{k-B}+\mathcal{O}(1),\quad
		k\to B,
	\end{equation}
	with some
	$a_1^{\pm B}, b^B\in\mathbb{C}$.
	
	Leveraging determinant relations \eqref{sd} and the properties of $\Psi_j$ given in Proposition \ref{PPsi}, we can obtain important analytical, continuity and symmetry properties of the spectral functions, which are summarized in the proposition given below.
	\begin{proposition}\label{prsp}
		The spectral functions $a_j(k)$, $j=1,2$, and $b(k)$ satisfy the following properties:
		\begin{enumerate}
			\item analyticity:
			$a_{1}(k)$ is analytic in  $k\in\mathbb{C}^{+}$ and continuous in 
			$\overline{\C^+}\setminus\{B,-B\}$, while
			$a_{2}(k)$ is analytic in 
			$k\in\mathbb{C}^{-}$
			and continuous in 
			$\overline{\C^-}$;
			
			\item behavior for the large $k$:
			$a_{1}(k)=1+\mathcal{O}
			\left(k^{-1}\right)$,
			$k\to\infty$, 
			$k\in\overline{\C^+}$, 
			$a_{2}(k)=1+\mathcal{O}
			\left(k^{-1}\right)$,
			$k\to\infty$, 
			$k\in\overline{\C^-}$
			and 
			$b(k)=\mathcal{O}
			\left(k^{-1}\right)$, $k\to\infty$, $k\in\mathbb{R}$;
			
			\item
			symmetries:
			$\overline{a_{1}}\left(-\bar{k}\right)=a_1(k)$,  
			$k\in\overline{\C^+}
			\setminus\{B,-B\}$ and
			$\overline{a_{2}}\left(-\bar{k}\right)=a_2(k)$,  
			$k\in\overline{\C^-}$;
			
			\item
			determinant relation:
			$a_{1}(k)a_{2}(k)
			+b(k)\bar{b}(-k)=1$, 
			$k\in\R\setminus\{B,-B\}$.
			
			\item the coefficients in \eqref{a1pmB} satisfy the following relations:
			\begin{equation}\label{scoeff}
				a_1^B=-\frac{A}{2\I}\bar{b}(-B),
				\quad
				a_1^{-B}=-\overline{a_1^B},
				\quad
				a_1^B\left(
				a_2(B)-\frac{2\I}{A}b^B\right)=0.
			\end{equation}
		\end{enumerate}
	\end{proposition}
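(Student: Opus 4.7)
The strategy is to derive each of the six items directly from the determinantal definitions \eqref{sd} of the spectral functions, combined with the properties of $\Psi_j(0,0,k)$ established in Proposition \ref{PPsi}. Items (1)--(2) and (4) are essentially substitutions. The analyticity/continuity assertions of item (1) follow at once from items (2)--(3) of Proposition \ref{PPsi}: the two columns entering \eqref{sda} are analytic in $\mathbb{C}^+$ and continuous up to the real axis apart from the points $\pm B$, where \eqref{Psi12B} reveals simple poles; likewise for $a_2$ in $\mathbb{C}^-$. For item (2), substituting \eqref{Psas2}--\eqref{Psas1} into \eqref{sda}--\eqref{sdb} gives $a_1(k), a_2(k) = 1 + \mathcal{O}(k^{-1})$, while for $b(k)$ both $\Psi_2^{(1)}$ and $\Psi_1^{(1)}$ have leading column $(1,0)^T$, so the $\mathcal{O}(1)$ contribution to the determinant cancels and $b(k) = \mathcal{O}(k^{-1})$. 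For item (4), I would take the determinant of the scattering relation $\Phi_1 = \Phi_2 S$ in \eqref{S}: since $\Phi_j$ differs from $\Psi_j$ by right-multiplication by a unit-determinant diagonal factor (see \eqref{PhPs}), item (6) of Proposition \ref{PPsi} gives $\det \Phi_j \equiv 1$, hence $\det S \equiv 1$, which in view of \eqref{Ssp} is the stated identity.

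For item (3), I would apply the symmetry \eqref{Psymm} at $x=0$, $t=0$ with $k$ replaced by $-\bar k$; this extends to the relevant half-planes by analytic continuation, noting that $k \in \mathbb{C}^+$ iff $-\bar k \in \mathbb{C}^+$. Reading off columns yields
\[
\overline{\Psi_2^{(2)}}(0,0,-\bar k) = \sigma_1\,\Psi_1^{(1)}(0,0,k), \qquad \overline{\Psi_1^{(1)}}(0,0,-\bar k) = \sigma_1\,\Psi_2^{(2)}(0,0,k).
\]
Substituting into \eqref{sda} and using $\det(\sigma_1 u, \sigma_1 v) = -\det(u,v)$ together with a second sign change from swapping the two columns gives $\overline{a_1}(-\bar k) = a_1(k)$; the identity for $a_2$ is analogous.

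Item (5) is the most delicate part, and where I expect the main difficulty to lie. I would first compute $a_1^B$ as $\det(\mathbf{v}(0,0), \Psi_2^{(2)}(0,0,B))$ with $\mathbf{v}:=(v_1,v_2)^T$, by isolating the residue in \eqref{sda} through the singular part of $\Psi_1^{(1)}$ in \eqref{Psi12B} (noting that $\Psi_2^{(2)}$ is regular at $k=B$). The same symmetry used above, specialized to $k=-B$, gives $\Psi_2^{(2)}(0,0,B) = \sigma_1\,\overline{\Psi_1^{(1)}}(0,0,-B)$. On the other hand, from \eqref{sdb} and the expansion of $\Psi_2^{(1)}$ in \eqref{Psi12B}, $b(-B) = \det\bigl(-\tfrac{2\I}{A}\sigma_1 \overline{\mathbf{v}}(0,0),\,\Psi_1^{(1)}(0,0,-B)\bigr)$. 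A direct entry-by-entry comparison of these two $2\times 2$ determinants yields $\bar b(-B) = -\tfrac{2\I}{A}\,a_1^B$, i.e., the first identity in \eqref{scoeff}. The second identity follows by matching singular parts at $k=-B$ on both sides of the already-proven $\overline{a_1}(-\bar k) = a_1(k)$. For the third, I would expand both sides of the determinant relation of item (4) as Laurent series near $k=B$, where $a_1$ and $b$ have simple poles while $a_2(k)$ and $\bar b(-k)$ are regular, and equate residues to obtain $a_1^B a_2(B) + b^B \bar b(-B) = 0$; combining with the first identity gives the factorized form $a_1^B\bigl(a_2(B) - \tfrac{2\I}{A} b^B\bigr) = 0$.

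The main bookkeeping challenge throughout item (5) is keeping careful track of the action of $\sigma_1$ on columns and the interplay between complex conjugation and the involution $k\mapsto -\bar k$ in the symmetry-based identification of $\Psi_2^{(2)}(0,0,B)$. Once this is handled, the remaining arguments are routine substitutions into $2\times 2$ determinantal formulas and Laurent-expansion matching.
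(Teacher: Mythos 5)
Your proposal is correct and follows essentially the same route as the paper: items (1)--(4) read off from the determinant representations \eqref{sd}, the properties of $\Psi_j$ in Proposition \ref{PPsi} and the symmetry \eqref{Psymm}, and item (5) obtained from the expansions \eqref{Psi12B} together with the residue of the determinant relation at the pole. The only (immaterial) difference is that you extract the first identity by computing $a_1^B$ at $k=B$ and invoking the symmetry to identify $\Psi_2^{(2)}(0,0,B)$, whereas the paper computes $a_1^{-B}=-\frac{A}{2\I}b(-B)$ directly at $k=-B$ and then conjugates using $a_1^{-B}=-\overline{a_1^B}$.
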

	\begin{proof}
		Item (1) follows from the analytical properties of the columns 
		$\Psi_i^{(j)}$, $i,j=1,2$, see items (2) and (3) in Proposition \ref{PPsi}.
		Asymptotic behavior of $\Psi_i^{(j)}$, $i,j=1,2$, given in \eqref{Psas2}--\eqref{Psas1}, yields item (2).
		Item (3) and (4) follows from \eqref{Psymm} and item (6) in Proposition \ref{PPsi}, respectively.
		
		Let us prove item (5).
		Symmetry relation 
		$\overline{a_{1}}
		\left(-\bar{k}\right)=a_1(k)$
		implies that 
		$a_1^{-B}=-\overline{a_1^B}$.
		From the determinant relation, see item (4), we conclude that
		\begin{equation}\label{dz}
			a_2(B)a_1^B+\overline{b}(-B)b^B=0.
		\end{equation}
		
		Combining \eqref{sda}, \eqref{sdb} and \eqref{Psi12B}, we obtain that
		\begin{equation*}
			\begin{split}
				&\frac{a_1^{-B}}{k+B}+\mathcal{O}(1)
				=\det
				\begin{pmatrix}
					\left(\Psi_1\right)_{1,1}(0,0,-B)
					&\frac{-\overline{v_2}(0,0)}{k+B}\\
					\left(\Psi_1\right)_{2,1}(0,0,-B)
					&\frac{-\overline{v_1}(0,0)}{k+B}
				\end{pmatrix}
				+\mathcal{O}(1),\\
				&b(-B)
				=\det
				\begin{pmatrix}
					\frac{-2\I}{A}\overline{v_2}(0,0)
					&\left(\Psi_1\right)_{1,1}(0,0,-B)\\
					\frac{-2\I}{A}\overline{v_1}(0,0)
					&\left(\Psi_1\right)_{2,1}(0,0,-B)
				\end{pmatrix},
			\end{split}
		\end{equation*}
		which implies that
		$
		a_1^{-B}=-\frac{A}{2\I}b(-B)
		$.
		Recalling \eqref{dz} and that
		$a_1^{-B}=-\overline{a_1^B}$, we arrive at \eqref{scoeff}.
	\end{proof}
	
	\subsection{Pure step initial data}
	In this subsection we explicitly calculate the spectral functions $a_j(k)$, $j=1,2$, and $b(k)$ for the ``shifted step'' initial data $q_{0,R}(x)$ given in \eqref{shst}.
	\begin{proposition}
		The spectral functions associated to the step-like initial data given in \eqref{shst} have the following form:
		\begin{equation}\label{spss}
			a_1(k)=1+\frac{A^2e^{4\I kR}}
			{4(k^2-B^2)},\quad
			a_2(k)=1,\quad
			b(k)=\frac{-\I Ae^{2\I R(k-B)}}{2(k-B)}.
		\end{equation}
	\end{proposition}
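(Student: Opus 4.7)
The plan is to solve the spatial Lax equation at $t=0$ explicitly by exploiting the piecewise structure of $q_{0,R}$ and then read off the spectral functions from the determinant formulas \eqref{sd}. Since $\Psi_j(0,0,k)=\Phi_j(0,0,k)$ by \eqref{PhPs}, it suffices to compute the Jost solutions $\Phi_1,\Phi_2$ at $x=0$.

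First I would split the real axis according to the support of $q_{0,R}$ and its reflection. A direct inspection shows that the potential $U(x,0)$ takes three constant-coefficient forms:
\begin{itemize}
\item for $x\le -R$: $q_{0,R}(x)=0$ while $\overline{q_{0,R}(-x)}=Ae^{2\I Bx}$, hence $U(x,0)=U_-(x,0)$;
\item for $-R<x\le R$: both $q_{0,R}(x)=0$ and $\overline{q_{0,R}(-x)}=0$, hence $U(x,0)=\mathbf{0}_{2\times2}$;
\item for $x>R$: $q_{0,R}(x)=Ae^{2\I Bx}$ while $\overline{q_{0,R}(-x)}=0$, hence $U(x,0)=U_+(x,0)$.
\end{itemize}

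Next, in each region the spatial Lax equation $\Phi_x=-\I k\sigma_3\Phi+U\Phi$ is explicitly solvable. In $(-\infty,-R]$ the boundary condition at $x\to-\infty$ forces $\Phi_1(x,0,k)=\Phi_-(x,0,k)$ throughout; in $[R,\infty)$ the condition at $x\to+\infty$ forces $\Phi_2(x,0,k)=\Phi_+(x,0,k)$; in the middle interval the equation reduces to $\Phi_x=-\I k\sigma_3\Phi$, so every solution has the form $e^{-\I kx\sigma_3}C(k)$ for some $k$-dependent matrix $C(k)$. Matching continuously at $x=-R$ (resp.\ $x=R$) then gives
\[
\Phi_1(x,0,k)=e^{-\I k(x+R)\sigma_3}\,e^{\I BR\sigma_3}N_-(k)\,e^{\I(k-B)R\sigma_3}
\quad\text{for }-R<x\le R,
\]
and similarly
\[
\Phi_2(x,0,k)=e^{-\I k(x-R)\sigma_3}\,e^{\I BR\sigma_3}N_+(k)\,e^{-\I(k+B)R\sigma_3}
\quad\text{for }-R<x\le R.
\]

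Setting $x=0$, the identity $e^{a\sigma_3}M\,e^{-a\sigma_3}$ multiplies the $(1,2)$ entry of a matrix $M$ by $e^{2a}$ and its $(2,1)$ entry by $e^{-2a}$; combining the two exponential factors around $N_\pm(k)$ yields
\[
\Phi_1(0,0,k)=\begin{pmatrix}1 & 0\\ \dfrac{-\I A\,e^{2\I(k-B)R}}{2(k-B)} & 1\end{pmatrix},\qquad
\Phi_2(0,0,k)=\begin{pmatrix}1 & \dfrac{-\I A\,e^{2\I(k+B)R}}{2(k+B)}\\ 0 & 1\end{pmatrix}.
\]
Substituting these columns into \eqref{sda}--\eqref{sdb} and computing the $2\times 2$ determinants yields the three formulas in \eqref{spss}; the exponents combine as $2\I(k+B)R+2\I(k-B)R=4\I kR$ in the $a_1$ calculation, and the single factor $2\I(k-B)R$ survives in the $b$ calculation, while $a_2$ collapses to the determinant of the identity.

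The only potential obstacle is conceptual rather than computational: one must check that the piecewise-defined classical solution constructed above coincides with the (unique) solution of the Volterra equations \eqref{Psi1}, \eqref{Psi2}. This follows from the local integrability of $q_{0,R}$, which makes both integral equations well-posed, together with the fact that the piecewise construction is continuous across $x=\pm R$ and satisfies the correct limit as $x\to\pm\infty$; uniqueness of the Volterra problem then forces the two objects to agree.
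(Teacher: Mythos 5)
Your proposal is correct and follows essentially the same route as the paper: both exploit the three-region piecewise structure of $U(\cdot,0)$ for the pure step to compute the Jost solutions explicitly and then read off the scattering data, and your formulas for $\Phi_1(0,0,k)$, $\Phi_2(0,0,k)$ and the resulting determinants indeed reproduce \eqref{spss}. The only (immaterial) difference is bookkeeping: the paper evaluates the scattering relation at $x=-R$ via $S(k)=e^{-\I(k+B)R\sigma_3}\Psi_2^{-1}(-R,0,k)\Psi_1(-R,0,k)e^{\I(k-B)R\sigma_3}$, solving the Volterra equation \eqref{Psi2} explicitly on $[-R,R]$, whereas you match the piecewise ODE solutions across $x=\pm R$ and use the determinant formulas \eqref{sd} at $x=0$.
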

	\begin{proof}
		Combining \eqref{PhPs} and \eqref{S} we obtain the following representation for $S(k)$:
		\begin{equation}\label{S-R}
			S(k)=e^{-\I(k+B)R\sigma_3}
			\Psi_2^{-1}(-R,0,k)
			\Psi_1(-R,0,k)
			e^{\I(k-B)R\sigma_3}.
		\end{equation}
		Taking into account that the matrix $U(x,0)$
		with $q(x,0)=q_{0,R}(x)$ has the form (see \eqref{U}, \eqref{shst})
		\begin{equation*}
			U(x,0)=
			\begin{cases}
				\begin{pmatrix}
					0&0\\
					-Ae^{2\I Bx}&0
				\end{pmatrix},&x<-R,\\
				\mathbf{0}_{2\times2},
				&-R\leq x\leq R,\\
				\begin{pmatrix}
					0&Ae^{2\I Bx}\\
					0&0
				\end{pmatrix},&x>R,
			\end{cases}
		\end{equation*}
		we have from \eqref{Psi1} that
		\begin{equation}\label{Psi1-R}
			\Psi_1(-R,0,k)=e^{\I BR\sigma_3}N_-(k).
		\end{equation}
		Equation \eqref{Psi2} implies that $\Psi_2(x,0,k)$ satisfies the following integral equation for
		$-R\leq x\leq R$:
		\begin{equation*}
			\Psi_2(x,0,k)=e^{\I Bx\sigma_3}N_+(k)
			+\int_R^x
			\begin{pmatrix}
				0&-Ae^{-\I k(x-y)+2\I By}\\
				0&0
			\end{pmatrix}
			\Psi_2(y,0,k)
			e^{\I(k+B)(x-y)\sigma_3}\,dy,
		\end{equation*}
		which can be solved explicitly as follows
		\begin{equation}\label{Psi2-R}
			\Psi_2(x,0,k)=\begin{pmatrix}
				e^{\I Bx}&
				\frac{-\I Ae^{\I Bx}}{2(k+B)}
				e^{2\I(k+B)(R-x)}\\
				0& e^{-\I Bx}
			\end{pmatrix},\quad
			-R\leq x\leq R.
		\end{equation}
		Combining \eqref{S-R}, \eqref{Psi1-R}, \eqref{Psi2-R} and recalling notation \eqref{Ssp}, we arrive at \eqref{spss}.
	\end{proof}
	
	Now let us study zeros of $a_1(k)$ for $k\in\mathbb{C}^+\cup\mathbb{R}$ and the winding of the argument of $a_1(k)$ along $\mathbb{R}$ in the case of the pure step initial data.
	These properties will motivate the formulation of the associated (basic) RH problem for the Cauchy problem \eqref{IVP} with general initial data.
	\begin{proposition}[Zeros of $a_1(k)$]
		\label{a1Zs}
		The function $a_1(k)$, given in \eqref{spss}, can have the following zeros for $k\in\mathbb{C}^+\cup\mathbb{R}$:
		\begin{enumerate}[1)]
			\item real zeros of $a_1(k)$:
			\begin{enumerate}[{1.}1)]
				\item if there exists 
				$n\in\N$ such that
				$\pi^2n^2=R^2(4B^2-A^2)$, then $a_1(k)$ has two simple real zeros, which read
				$$
				k=\pm\frac{\pi n}{2R}
				=\pm\frac{1}{2}\left(4B^2-A^2\right)^{1/2};
				$$
				\item if there exists 
				$n\in\Z$ such that
				$\pi^2(1/2+n)^2=R^2(4B^2+A^2)$, then $a_1(k)$ has two simple real zeros, which read
				$$
				k=\frac{\pi (1+2n)}{4R}
				=\pm\frac{1}{2}\left(4B^2+A^2\right)^{1/2};
				$$
				\item if $4B^2=A^2$, then $a_1(k)$ has one real zero of multiplicity two at $k=0$;
			\end{enumerate}
			\medskip
			\item purely imaginary zeros of $a_1(k)$ for $\Imm\,k>0$:
			\begin{enumerate}[{2.}1)]
				\item if $4B^2-A^2<0$, then 
				$a_1(k)$ has one simple zero $k=\I k_0$, where $k_0>0$ is a unique solution of the following transcendental equation:
				\begin{equation}\label{tre}
					A^2e^{-4kR}=4(B^2+k^2),\quad k>0,
					\quad R\geq0;
				\end{equation}
				\item if $4B^2-A^2\geq 0$, then $a_1(k)$ has no imaginary zeros for $\Imm\,k>0$;
			\end{enumerate}
			\medskip
			\item assume that $0\leq 4|B|R\leq\pi$.
			Then depending on the values of $A^2, B^2$ and $R$,
			$a_1(k)$ has the following zeros for $k\in\C^+$, 
			$\Imm\,k\neq0$:
			\begin{enumerate}[{3.}1)]
				\item if 
				$0\leq R\leq\frac{\pi}
				{2\left(4B^2+A^2\right)^{1/2}}$,
				then $a_1(k)$ has no zeros for $k\in\C^+$, 
				$\Imm\,k\neq0$;
				\item if
				there exists $n\in\N$ such that
				$\frac{(2n-1)\pi}
				{2\left(4B^2+A^2\right)^{1/2}}< R
				\leq\frac{(2n+1)\pi}
				{2\left(4B^2+A^2\right)^{1/2}}$, then $a_1(k)$ has $2n$ simple zeros
				$\{p_j,-\overline{p}_j\}_{j=1}^n$,
				such that
				$$
				\Ree\,p_j\in\left(-\frac{j\pi}{2R},
				\frac{(1-2 j)\pi}{4R}\right),\quad
				j=1,\dots,n.
				$$
				Here $\Ree\,p_j=\frac{\tau_j}{4R}$ and
				$\Imm\,p_j=\frac{y_j}{4R}$, $j=1,\dots,n$, where
				$(\tau_j,y_j)$ is a unique solution of transcendental equation \eqref{sety1} considered for 
				$\tau\in(2\pi j-\pi,2\pi j)$.
			\end{enumerate}
		\end{enumerate}
	\end{proposition}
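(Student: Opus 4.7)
The starting point is to factor $a_1(k) = N(k)/[4(k^2 - B^2)]$, where $N(k) := 4(k^2 - B^2) + A^2 e^{4\I kR}$ is entire. Since $N(\pm B) = A^2 e^{\pm 4\I BR} \neq 0$, the zeros of $a_1$ in $\overline{\C^+}$ coincide with those of $N$ there. I would then treat the three items separately.

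For item (1), if $k \in \R$ then the equation $N(k)=0$ reads $A^2 e^{4\I kR} = 4(B^2 - k^2)$; since the right side is real, one must have $e^{4\I kR} \in \{\pm 1\}$, i.e., $4kR = n\pi$ with $n \in \Z$. The ``$+1$'' branch (even $n$) gives $k^2 = B^2 - A^2/4$, requiring $4B^2 \geq A^2$, and after matching $n$ yields the characterization in (1.1) (and the boundary case (1.3)). The ``$-1$'' branch (odd $n$) gives $k^2 = B^2 + A^2/4$, producing (1.2). Multiplicities come from inspecting $N'(k) = 8k + 4\I R A^2 e^{4\I kR}$ at each candidate root. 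For item (2), substituting $k = \I\kappa$ with $\kappa > 0$ reduces $N(k)=0$ to the real equation $f(\kappa) := A^2 e^{-4\kappa R} - 4(\kappa^2 + B^2) = 0$. Since $f'(\kappa) < 0$ for $\kappa > 0$, $f(0) = A^2 - 4B^2$, and $f(\infty) = -\infty$, existence/uniqueness of a positive root is equivalent to $A^2 > 4B^2$, giving the dichotomy (2.1)/(2.2).

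Item (3) is the main obstacle. My plan is to use the symmetry $\overline{a_1}(-\bar k) = a_1(k)$ of Proposition \ref{prsp} to pair non-real zeros as $\{p_j, -\bar p_j\}$, and then parameterize by $p = (\tau + \I y)/(4R)$ with $y > 0$. Separating real and imaginary parts of $A^2 e^{4\I pR} = 4(B^2 - p^2)$ yields the transcendental system \eqref{sety1}, schematically
\begin{equation*}
  A^2 e^{-y}\cos\tau = 16 B^2 R^2 - \tau^2 + y^2, \qquad
  A^2 e^{-y}\sin\tau = -2\tau y.
\end{equation*}
Viewing the total zero count as a function of $R$, it is locally constant and can only jump when a pair of complex zeros collides on the real axis. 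By item (1.2), such collisions occur exactly at the thresholds $R = (2n-1)\pi/[2(4B^2 + A^2)^{1/2}]$, so a continuity/perturbation argument (tracking the movement of roots as $R$ crosses a threshold) will show that a new pair $\{p_n, -\bar p_n\}$ enters $\C^+$ at each such crossing. Combined with the base case $R \leq \pi/[2(4B^2+A^2)^{1/2}]$, in which $N$ has no off-axis zeros, this produces the count stated in (3.1)--(3.2).

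The technical crux is uniqueness and localization of $(\tau_j, y_j)$ inside each prescribed strip $\tau \in (2\pi j - \pi, 2\pi j)$. Here the hypothesis $0 \leq 4|B|R \leq \pi$ is essential: it pins the sign of the right side of the first equation on each strip and lets one implicitly solve the second equation for $y = y(\tau)$ (using $y > 0$, $\tau < 0$, and the definite sign of $\sin\tau$), then substitute into the first to obtain a single scalar equation $F_j(\tau) = 0$ on $(2\pi j - \pi, 2\pi j)$. A monotonicity analysis of $F_j$ -- whose derivative can be controlled precisely because $4|B|R \leq \pi$ forbids extra sign changes of the quadratic piece $16B^2R^2 - \tau^2 + y^2$ inside the strip -- then yields exactly one root, and matching its location with the threshold argument places $\Ree p_j$ in the stated interval. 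I expect this monotonicity/implicit-function step, together with the bookkeeping of emerging zeros near thresholds, to be the most delicate portion and to constitute the bulk of Appendix \ref{Ap1}.
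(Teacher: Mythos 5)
Your items 1) and 2) are handled exactly as in the paper's Appendix \ref{Ap1}: write $a_1=N/\bigl(4(k^2-B^2)\bigr)$ (equivalently, the system \eqref{se}), set $\Imm\,k=0$ to force $e^{4\I kR}=\pm1$ for the real zeros, and set $\Ree\,k=0$ and use monotonicity of $A^2e^{-4\kappa R}-4(\kappa^2+B^2)$ for the imaginary ones; this part is fine. (One caveat your own multiplicity check would reveal: for $R>0$ and $4B^2=A^2$ one has $N'(0)=4\I RA^2\neq0$, so the double zero claimed in 1.3) really occurs only at $R=0$; this is a slip in the statement, not in your method.)

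For item 3) you take a genuinely different route, and as written it has gaps precisely at the load-bearing steps. The paper works at fixed $R$: after rescaling to \eqref{sety}, the sign of the second equation confines $\tau$ to $(2\pi j-\pi,2\pi j)$, dividing the two equations eliminates the exponential and yields a quadratic in $y$ with a single positive root $y(\tau)$ (this is \eqref{sety1}), and existence/uniqueness per strip is then obtained by comparing the convex function $y(\tau)e^{y(\tau)}$ (convexity is the hard Step 4, $g(\tau)\ge0$) with the concave bump $-2A^2R^2\sin\tau/\tau$, the endpoint-derivative comparison \eqref{dl}--\eqref{dr} producing exactly the threshold $4A^2R^2>\pi^2(2j-1)^2-16B^2R^2$, i.e.\ $R>(2j-1)\pi/\bigl(2(4B^2+A^2)^{1/2}\bigr)$. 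You instead propose a deformation-in-$R$ count plus an unspecified scalar ``monotonicity analysis''. Three things are missing. First, ``the count can only jump when a pair collides on the real axis'' is unjustified as stated: you must also exclude zeros entering or leaving through infinity (easy, since $|N(k)|\ge 4|k^2-B^2|-A^2$ on $\overline{\C^+}$ uniformly in $R$, so all upper-half-plane zeros stay in a fixed compact set) and pairs $\{p,-\bar p\}$ merging onto or emerging from the imaginary axis (ruled out by the simplicity and uniqueness in item 2, but it has to be said); moreover real zeros also occur at the 1.1)-type parameter values, and you need the observation that under $4|B|R\le\pi$ these cannot occur, because $R(4B^2-A^2)^{1/2}<2|B|R\le\pi/2<\pi$, before you may assert that the only crossings are the 1.2) thresholds. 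Second, the transversality at a threshold --- that the real pair actually splits \emph{into} $\C^+$ as $R$ increases --- is asserted, not proved; it does hold (at $k_*=\pm\tfrac12(4B^2+A^2)^{1/2}$ one computes $dk/dR=-N_R/N_k$ and finds $\Imm\,(dk/dR)>0$), but this computation is part of the proof. Third, the per-strip uniqueness and localization --- the crux, as you acknowledge --- is left at the level of ``a monotonicity analysis of $F_j$'' justified only by the vague claim that $4|B|R\le\pi$ ``forbids extra sign changes''; this is not an argument, and it is exactly where the paper spends Steps 2--5. Note that if you completed your counting scheme --- adding the remark that an off-axis zero cannot cross the lines $\tau\in\pi\Z$ (there $\sin\tau=0$ forces $y=0$), so each entering pair is trapped in the strip it enters at $\tau=(2j-1)\pi$ --- then uniqueness and localization would follow from the count alone and the scalar analysis could be dropped; that would be a legitimately different and arguably lighter proof than the paper's convexity computation, but neither that bookkeeping nor the $F_j$ analysis is actually carried out in your proposal.
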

	\begin{proof}
		The details of the proof can be found in Appendix \ref{Ap1}.		
	\end{proof}
	\begin{remark}
		Notice that the properties of zeros of $a_1(k)$, obtained in Proposition \ref{a1Zs}, are consistent, in the limit $B\to0$, with \cite[Proposition 2]{RS21-CMP}, where it was considered the ``shifted step'' \eqref{shst} with $B=0$.
	\end{remark}
	\begin{proposition}[Winding of the argument of $a_1(k)$]
		\label{a1Ws}
		Assume that $0<4|B|R<\pi$.
		Then the winding of the argument of the function $a_1(k)$ for 
		$k\in(-\infty,0)\setminus\{-|B|\}$, given in 
		\eqref{spss}, has the following properties:
		\begin{enumerate}[1)]
			\item if $0<R<\frac{\pi}
			{2\left(4B^2+A^2\right)^{1/2}}$, then
			\begin{equation}\label{R0}
				\begin{split}
					&\int_{-\infty}^{k}d\arg(a_1(s))\in(-\pi,\pi),
					\quad k\in(-\infty,-|B|),\\
					&\lim\limits_{k\uparrow-|B|}
					\int_{-\infty}^{k}d\arg(a_1(s))
					=-\theta_{-|B|},\quad\theta_{-|B|}\in(0,\pi),
				\end{split}
			\end{equation}
			and
			\begin{equation}\label{R0-1}
				\begin{split}
					&\lim\limits_{k\downarrow-|B|}
					\arg(a_1(k))=\pi-\theta_{-|B|},\\
					&\int_{-|B|}^{k}d\arg(a_1(s))
					\in(0,\pi),
					\quad k\in(-|B|,0);
				\end{split}
			\end{equation}
			
			\item if $\frac{(2n-1)\pi}
			{2\left(4B^2+A^2\right)^{1/2}}<R<\frac{(2n+1)\pi}
			{2\left(4B^2+A^2\right)^{1/2}}$ for some $n\in\N$, then, using notations
			\begin{equation}\label{om1}
				\omega_{n+1}=-\infty,\quad
				\omega_j=\frac{-(2j-1)\pi}{4R},\,j=1,\dots,n,\quad
				\omega_0=-|B|,
			\end{equation}
			we have
			\begin{equation}\label{Rn}
				\begin{split}
					&\int_{-\infty}^{\omega_{n-j+1}}
					\,d\arg a_1(k)=(2j-1)\pi,\quad j=1,\dots,n,\\
					&\int_{-\infty}^k\,d\arg a_1(k)
					\in(2\pi j-\pi,2\pi j+\pi),\quad 
					k\in(\omega_{n-j+1},\omega_{n-j}),\quad
					j=0,\dots n,\\
					&\lim\limits_{k\uparrow-|B|}
					\int_{-\infty}^{k}d\arg(a_1(s))
					=2\pi n-\theta_{-|B|},\quad\theta_{-|B|}\in(0,\pi),
				\end{split}
			\end{equation}
			and
			\begin{equation}\label{Rn-1}
				\begin{split}
					&\lim\limits_{k\downarrow-|B|}
					\arg(a_1(k))=
					2\pi n+\pi-\theta_{-|B|},\\
					&\int_{-|B|}^{k}d\arg(a_1(s))
					\in(2\pi n,2\pi n+\pi),
					\quad k\in(-|B|,0);
				\end{split}
			\end{equation}
			\item 
			depending on the sign of 
			$\left(4B^2-A^2\right)$
			and the value of $R$, we have the following winding of the argument at $k=0$:
			\begin{subequations}
				\label{arga10}
				\begin{align}
					\label{arga10-a}
					&\mbox{if }4B^2-A^2>0,\mbox{ then }\\
					\nonumber
					&\qquad\qquad
					\int_{-|B|}^{0}d\arg(a_1(k))=
					\begin{cases}
						0,&\mbox{}
						0<R<\frac{\pi}
						{2\left(4B^2+A^2\right)^{1/2}},\\
						2\pi n,&\mbox{}
						\frac{(2n-1)\pi}
						{2\left(4B^2+A^2\right)^{1/2}}
						<R<\frac{(2n+1)\pi}
						{2\left(4B^2+A^2\right)^{1/2}},
					\end{cases}\\
					\label{arga10-b}
					&\mbox{if }4B^2-A^2<0,\mbox{ then }\\
					\nonumber
					&\qquad\qquad
					\int_{-|B|}^{0}d\arg(a_1(k))=
					\begin{cases}
						\pi,&\mbox{}
						0<R<\frac{\pi}
						{2\left(4B^2+A^2\right)^{1/2}},\\
						2\pi n+\pi,&\mbox{}
						\frac{(2n-1)\pi}
						{2\left(4B^2+A^2\right)^{1/2}}
						<R<\frac{(2n+1)\pi}
						{2\left(4B^2+A^2\right)^{1/2}}.
					\end{cases}
				\end{align}
			\end{subequations}
		\end{enumerate}
	\end{proposition}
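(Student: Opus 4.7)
My approach combines explicit analysis of $\Ree a_1$ and $\Imm a_1$ at the real-axis nodal points with the global bookkeeping provided by the argument principle together with the zero count of Proposition~\ref{a1Zs}. Decomposing $a_1(k) = 1 + h(k)$ with $h(k) = A^2 e^{4\I kR}/(4(k^2-B^2))$, for real $k$
\[
\Ree a_1(k) = 1 + \frac{A^2\cos(4kR)}{4(k^2-B^2)}, \qquad \Imm a_1(k) = \frac{A^2\sin(4kR)}{4(k^2-B^2)}.
\]
The zeros of $\Imm a_1$ on $(-\infty,0)\setminus\{-|B|\}$ occur at $k = -n\pi/(4R)$; odd $n = 2j-1$ give $\omega_j = -(2j-1)\pi/(4R)$ (with $\cos(4\omega_jR) = -1$) and even $n = 2m$ give $k_m = -m\pi/(2R)$ (with $\cos(4k_mR) = 1$). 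The standing assumption $4|B|R < \pi$ places $\omega_1$ immediately to the left of $-|B|$ and forces $\sin(4kR)$ to keep a constant sign throughout $(-|B|,0)$. Evaluation at an odd node gives
\[
\Ree a_1(\omega_j) = 1 - \frac{4A^2R^2}{(2j-1)^2\pi^2 - 16R^2B^2},
\]
which vanishes exactly at the thresholds $R_j := (2j-1)\pi/(2\sqrt{4B^2+A^2})$ appearing in the statement, while $\Ree a_1(k_m) > 0$ at every even node under $4|B|R < \pi$.

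Next I would invoke the argument principle on a large semicircle in $\overline{\C^+}$ indented above the poles at $\pm B$: the large semicircle contributes $0$ since $a_1 \to 1$ at infinity, each pole indentation contributes $+\pi$, and the remaining real-axis integral picks up $2\pi$ per zero of $a_1$ in $\C^+$. The count is read off Proposition~\ref{a1Zs}: no zeros in $\C^+$ when $R < R_1$, and $2n$ complex zeros $\{p_j, -\overline{p_j}\}_{j=1}^n$ (plus possibly one imaginary zero when $4B^2 < A^2$) when $R_n < R < R_{n+1}$. Combined with the symmetry $\overline{a_1}(-\bar k) = a_1(k)$, which equates the winding on $(-\infty,0)$ with that on $(0,\infty)$, this fixes the total integrated winding on the negative real axis.

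To distribute this total across the intervals $(\omega_{n-j+1}, \omega_{n-j})$ and obtain the precise values in \eqref{R0}--\eqref{R0-1} and \eqref{Rn}--\eqref{Rn-1}, I would use the sign structure above: each odd node $\omega_j$ with $\Ree a_1(\omega_j) < 0$ corresponds to the trajectory $k \mapsto a_1(k)$ crossing the \emph{negative} real axis, and the alternating sign of $\Imm a_1$ in a neighborhood of the node (controlled by $\partial_k\sin(4kR)$) forces each such crossing to contribute a full $2\pi$ turn, while crossings with $\Ree a_1 > 0$ contribute negligibly. The local pole expansion $a_1(k) \sim a_1^{-B}/(k+|B|)$ together with $a_1^{-B} = -\overline{a_1^B}$ from Proposition~\ref{prsp}(5) fixes $\theta_{-|B|} \in (0,\pi)$ and the $+\pi$ jump across the pole. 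On $(-|B|, 0)$, the constant sign of $\Imm a_1$ confines the curve to a single half-plane, and the explicit value $a_1(0) = (4B^2-A^2)/(4B^2)$, real with sign determined by $4B^2 - A^2$, dictates which sub-case of \eqref{arga10} applies.

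The main technical obstacle is the distribution step: verifying that each sign-flip of $\Ree a_1$ at successive odd nodes produces a genuine $2\pi$ loop around the origin rather than a partial oscillation. I would exploit the monotonicity of $|h(k)| = A^2/(4|k^2-B^2|)$ on the half-lines $k < -|B|$ and $-|B| < k < 0$ to rule out backtracking, with the argument-principle total from the previous step serving as a consistency check that pins down the integer count unambiguously.
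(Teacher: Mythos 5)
Your proposal is correct in substance, but it is worth separating what coincides with the paper from what is extra. The paper's proof (Appendix~\ref{Ap2}) is exactly the local nodal bookkeeping that you use in your ``distribution'' step: it writes $\Ree a_1$, $\Imm a_1$ as in \eqref{spss}, observes that $\Imm a_1$ vanishes only at $k=-m\pi/(4R)$, that $a_1>0$ at the even nodes and $\Ree a_1(\omega_j)<0$ precisely for $j\le n$ when $R$ lies between the thresholds, and that $\sin(4kR)$ has a fixed sign on each internodal interval; since $a_1$ is real and nonzero at every node, this sign pattern pins the cumulative winding exactly (it equals an odd multiple of $\pi$ at the ``negative'' odd nodes and an even multiple at the others), which is all of \eqref{Rn} and \eqref{R0-1}, and the sign of $a_1(0)=(4B^2-A^2)/(4B^2)$ then settles \eqref{arga10}. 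Your additional layer --- the argument principle on an indented upper half-plane contour combined with the zero count of Proposition~\ref{a1Zs} and the symmetry $\overline{a_1}(-\bar k)=a_1(k)$ --- is genuinely absent from the paper; it buys a clean global consistency check (the totals do match: e.g.\ for $R<R_1$ the real-axis winding is $2\pi N-2\pi$ with $N=1$ or $0$ according to $4B^2\lessgtr A^2$), but it cannot by itself produce the interval-by-interval refinement, so the local analysis you sketch remains the real proof and the global count is redundant; note also that the bookkeeping must subtract the two $+\pi$ indentation contributions, and one should check (as follows from your own node computations under $0<4|B|R<\pi$ and the strict $R$-inequalities) that no zeros of $a_1$ sit on the contour. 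Two smaller points: the claim that $a_1^{-B}=-\overline{a_1^{B}}$ from Proposition~\ref{prsp}(5) ``fixes $\theta_{-|B|}\in(0,\pi)$'' is not justified --- that symmetry relates the two residues but says nothing about their arguments; what fixes it is the explicit limit $\arg a_1(k)\to-4|B|R\in(-\pi,0)$ as $k\uparrow-|B|$ (the paper computes $\tan(\arg a_1(k))\to-\tan(4|B|R)$), i.e.\ precisely the hypothesis $0<4|B|R<\pi$ together with the sign of $\Imm a_1$ on either side of the pole, so you should replace that appeal accordingly. Finally, the worry about ``backtracking'' and the proposed monotonicity of $|h(k)|$ are unnecessary: partial oscillations between nodes cannot affect the winding because the trajectory is confined to one open half-plane there and its argument at the node endpoints is already a fixed multiple of $\pi$.
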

	\begin{proof}
		The proof is given in Appendix \ref{Ap2}.	
	\end{proof}
	\begin{remark}
		Notice that	we choose the argument of $a_1(k)$ as $k\downarrow-|B|$
		in \eqref{R0-1} and \eqref{Rn-1} in such a way that $\arg a_1(k)$ is continuous for
		$k\in\{0<|k+|B||<\ve\}
		\cap\overline{\C^+}$.
	\end{remark}
	
	Motivated by the properties of the spectral functions for the pure step initial data \eqref{shst}, in what follows we impose the restrictions for $a_j(k)$, $j=1,2$ described below:
	
	\begin{description}
		\item [Assumptions A]
		\indent
		\begin{enumerate}[{A.}1)]
			\item  $a_1^{\pm B}\ne 0$, 
			see \eqref{a1pmB};
			\item $a_1(k)\neq 0$ for all $k\in\R\setminus\{B,-B\}$ and
			$a_2(k)\neq0$ for all $k\in\R$;
			\item $a_2(k)$ has no zeros for 
			$k\in\C^-$;
			\item $b(k)$ has no zeros for $k\in\R$;
			\item Fix the argument of the product $a_1(k)a_2(k)$ at $k=-\infty$ as follows:
			\begin{equation}\label{arga}
				\lim\limits_{k\to-\infty}
				\arg(a_1(k)a_2(k))=0
			\end{equation}
			and assume that one of two cases below holds:
		\end{enumerate}
	\end{description}

	\begin{description}
		\item [Case I]
		\indent
		\begin{enumerate}[{I.}1)]
			\item $a_1(k)$ has $2n+1$, 
			$n\in\N\cup\{0\}$, simple zeros in $\C^+$ at
			$k\in\left\{
			\I k_0,
			\{p_j,-\overline{p}_j\}_{j=1}^n
			\right\}$, where
			$$
			k_0>0,\quad
			\Imm\,p_j>0,\,j=1,\dots,n,\quad
			\Ree\,p_n<\dots<
			\Ree\,p_1<0;
			$$
			\item there exist $\omega_j<0$,
			$j=0,\dots,n+1$, such that
			\begin{equation}\label{ordom}
				-\infty=\omega_{n+1}
				<\Ree\,p_n<\omega_n<\Ree\,p_{n-1}
				<\omega_{n-1}<\dots<\Ree\,p_1
				<\omega_1<\omega_0=-|B|,
			\end{equation}
			and (cf.\,\,\eqref{Rn})
			\begin{equation}\label{gRn}
				\begin{split}
					&\int_{-\infty}^{\omega_{n-j+1}}
					\,d\arg\left(a_1(k)a_2(k)\right)
					=(2j-1)\pi,
					\quad j=1,\dots,n,\\
					&\int_{-\infty}^k\,
					d\arg\left(a_1(k)a_2(k)\right)
					\in(2\pi j-\pi,2\pi j+\pi),
					\quad 
					k\in(\omega_{n-j+1},\omega_{n-j}),
					\quad j=0,\dots n,\\
					&\lim\limits_{k\uparrow-|B|}
					\int_{-\infty}^{k}d\arg(a_1(s)a_2(s))
					=2\pi n-\theta_{-|B|},
					\quad\theta_{-|B|}
					\in(0,\pi),
				\end{split}
			\end{equation}
			as well as (cf.\,\,\eqref{Rn-1}
			and \eqref{arga10-b})
			\begin{equation}\label{gRn-1}
				\begin{split}
					&\lim\limits_{k\downarrow-|B|}
					\arg(a_1(k)a_2(k))=
					2\pi n+\pi-\theta_{-|B|},\\
					&\int_{-|B|}^{k}d\arg(a_1(s)a_2(s))
					\in(2\pi n,2\pi n+\pi),
					\quad k\in(-|B|,0),
				\end{split}
			\end{equation}
			and
			\begin{equation}\label{aa1g}
				\int_{-|B|}^{0}d\arg(a_1(k)a_2(k))
				=2\pi n+\pi.
			\end{equation}
		\end{enumerate}
		
		\item[Case II]
		\indent
		\begin{enumerate}[{II.}1)]
			\item $a_1(k)$ has $2n$, 
			$n\in\N\cup\{0\}$, simple zeros in $\C^+$ at
			$k\in\left\{
			\{p_j,-\overline{p}_j\}_{j=1}^n
			\right\}$, where
			$$
			\Imm\,p_j>0,\,j=1,\dots,n,\quad
			\Ree\,p_n<\dots<
			\Ree\,p_1<0;
			$$
			\item there exist $\omega_j<0$,
			$j=0,\dots,n+1$, such that
			\eqref{ordom}, \eqref{gRn}, and
			\eqref{gRn-1} hold and 
			(cf.\,\,\eqref{aa1g})
			\begin{equation}\label{aa1g1}
				\int_{-|B|}^{0}d\arg(a_1(k)a_2(k))
				=2\pi n.
			\end{equation}
		\end{enumerate}
	\end{description}
	\begin{remark}
		Consider the spectral functions $a_1(k)$ and $a_2(k)$ given in \eqref{spss}, which correspond to ``shifted step'' initial data \eqref{shst}.
		Then Propositions \ref{a1Zs} and \ref{a1Ws} imply that
		\begin{enumerate}[(i)]
			\item if $0<R<\frac{\pi}
			{2\left(4B^2+A^2\right)^{1/2}}$,
			$0< 4|B|R\leq\pi$, and
			$4B^2-A^2<0$,
			then $a_j(k)$, $j=1,2$, satisfy
			Assumption A and Case I with $n=0$;
			\item if 
			$\frac{(2n-1)\pi}
			{2\left(4B^2+A^2\right)^{1/2}}
			<R<\frac{(2n+1)\pi}
			{2\left(4B^2+A^2\right)^{1/2}}$ for some $n\in\N$, $0< 4|B|R\leq\pi$, and $4B^2-A^2<0$,
			then $a_j(k)$, $j=1,2$ satisfy
			Assumption A and Case I with the same $n$;
			\item if $0<R<\frac{\pi}
			{2\left(4B^2+A^2\right)^{1/2}}$,
			$0< 4|B|R\leq\pi$,
			$4B^2-A^2>0$,
			and
			$\pi^{-2}R^2(4B^2-A^2)$ is not a squared integer,
			then $a_j(k)$, $j=1,2$ satisfy
			Assumption A and Case II with $n=0$;
			\item if $\frac{(2n-1)\pi}
			{2\left(4B^2+A^2\right)^{1/2}}
			<R<\frac{(2n+1)\pi}
			{2\left(4B^2+A^2\right)^{1/2}}$ for some $n\in\N$,
			$0< 4|B|R\leq\pi$, 
			$4B^2-A^2>0$, and
			$\pi^{-2}R^2(4B^2-A^2)$ is not a squared integer,
			then $a_j(k)$, $j=1,2$, satisfy
			Assumption A and Case II with the same $n$.
		\end{enumerate}
	\end{remark}
	
	\subsection{Basic Riemann-Hilbert problem}
	
	Define a sectionally meromorphic matrix-valued function $M(x,t,k)$ as follows:
	
	\begin{equation}
		\label{DM}
		M(x,t,k)=
		\begin{cases}
			\left(
			\frac{\Psi_1^{(1)}(x,t,k)}{a_{1}(k)},
			\Psi_2^{(2)}(x,t,k)\right),&
			k\in\C^+,\\
			\left(\Psi_2^{(1)}(x,t,k),
			\frac{\Psi_1^{(2)}(x,t,k)}{a_{2}(k)}
			\right),& k\in\C^-.\\
		\end{cases}
	\end{equation}
	Combining \eqref{DM} and scattering relation (\ref{S}), we conclude that the boundary values of the matrix $M(x,t,k)$, relative to the contour $\mathbb{R}\setminus\{-B\}$, satisfy the following multiplicative jump condition (notice that \eqref{Psi12B} and 
	\eqref{a1pmB} imply that $M_\pm$ is bounded at $k=B$):
	\begin{equation}\label{jr}
		M_+(x,t,k)=M_-(x,t,k)J(x,t,k),\quad k\in\R\setminus\{-B\},
	\end{equation}
	where
	\begin{equation}\label{jump}
		J(x,t,k)=
		\begin{pmatrix}
			1+r_{1}(k)r_{2}(k)& r_{2}(k)e^{-2\I kx-4\I k^2t}\\
			r_1(k)e^{2\I kx+4\I k^2t}& 1
		\end{pmatrix}
		,\quad k\in\R\setminus\{-B\},
	\end{equation}
	with the reflection coefficients $r_1(k)$ and $r_2(k)$ given by (see also \eqref{rj-s} below)
	$$
	r_1(k)=\frac{b(k)}{a_1(k)},\quad
	r_2(k)=\frac{\bar{b}(-k)}{a_2(k)},\quad
	k\in\R\setminus\{-B\}.
	$$
	Taking into account Assumptions 
	A.1)--A.3), we have (see \eqref{a1pmB})
	\begin{equation}\label{rj-s}
		\begin{split}
			&r_1(k)=\frac{b^B}{a_1^B}+\mathcal{O}(k-B),\quad
			k\to B,\quad
			r_1(k)=\frac{b(-B)}{a_1^{-B}}(k+B)
			+\mathcal{O}(k+B)^2,\quad k\to -B,\\
			&r_2(k)=\frac{\bar{b}(-B)}{a_2(B)}
			+\mathcal{O}(k-B),\quad
			k\to B,\quad
			r_2(k)=\frac{-\overline{b^B}}{a_2(-B)(k+B)}+
			\mathcal{O}(1),\quad k\to -B.
		\end{split}
	\end{equation}
	Observe that the symmetries described in Proposition \ref{prsp}, item (3) and \eqref{a1pmB} imply that 
	\begin{equation}
		\label{r-sym}
		r_1(-k) r_2(-k)=\overline{r_1}(k)\;  \overline{r_2}(k), \quad k\in\R,
	\end{equation}
	while the determinant relation, see Proposition \ref{prsp}, item (4), yields
	\begin{equation}\label{r-a}
		1+r_1(k) r_2(k)=\frac{1}{a_1(k)a_2(k)},\quad k\in\R.
	\end{equation}
	
	Behavior of $\Psi_j$ and $a_j$, $j=1,2$, for the large $k$, see \eqref{Psas1}--\eqref{Psas2} and Proposition \ref{prsp}, item (2), implies the following normalization condition:
	\begin{equation}
		M(x,t,k)=I
		+\mathcal{O}\left(k^{-1}\right),
		\quad k\to\infty.
	\end{equation}
	
	The matrix function $M(x,t,k)$ has the following residue conditions at the zeros of $a_1(k)$:
	
	\textbf{Case I:}
	\begin{equation}
		\label{resinI}
		\begin{split}
			\underset{k=\I k_0}
			{\operatorname{Res}} M^{(1)}(x,t,k)&=
			\frac{\gamma_0}{\dot{a}_1(\I k_0)}
			e^{-2k_0x-4\I k_0^2t}
			M^{(2)}(x,t,\I k_0),\\
			\underset{k=p_j}
			{\operatorname{Res}} M^{(1)}(x,t,k)&=
			\frac{\eta_j}{\dot a_1(p_j)}
			e^{2\I p_jx+4\I p_j^2t}
			M^{(2)}(x,t,p_j),
			\quad j=1,\dots,n,\\
			\underset{k=-\overline{p}_j}
			{\operatorname{Res}}
			M^{(1)}(x,t,k)&=
			\frac{\hat{\eta}_j}
			{\dot a_1(-\overline{p}_j)}
			e^{-2\I\overline{p}_jx
				+4\I\overline{p}_j^2t}
			M^{(2)}(x,t,-\overline{p}_j),
			\quad j=1,\dots,n.
		\end{split}
	\end{equation}
	
	\textbf{Case II:}
	\begin{equation}
		\label{resinII}
		\begin{split}
			\underset{k=p_j}
			{\operatorname{Res}} M^{(1)}(x,t,k)&=
			\frac{\eta_j}{\dot a_1(p_j)}
			e^{2\I p_jx+4\I p_j^2t}
			M^{(2)}(x,t,p_j),
			\quad j=1,\dots,n,\\
			\underset{k=-\overline{p}_j}
			{\operatorname{Res}}
			M^{(1)}(x,t,k)&=
			\frac{\hat{\eta}_j}
			{\dot a_1(-\overline{p}_j)}
			e^{-2\I\overline{p}_jx
				+4\I\overline{p}_j^2t}
			M^{(2)}(x,t,-\overline{p}_j),
			\quad j=1,\dots,n.
		\end{split}
	\end{equation}
	Here $\gamma_0$, $\eta_j$ and $\hat\eta_j$ are defined in terms of the initial data as constants of proportionality of the following columns:
	\begin{equation*}
		\begin{split}
			&\Psi_1^{(1)}(0,0,\I k_0)
			=\gamma_0\Psi_2^{(2)}(0,0,\I k_0),\\ &\Psi_1^{(1)}(0,0,p_j)
			=\eta_j\Psi_2^{(2)}(0,0,p_j),\quad
			\Psi_1^{(1)}(0,0,p_j)
			=\hat\eta_j
			\Psi_2^{(2)}(0,0,-\overline{p}_j),
			\quad j=1,\dots,n.
		\end{split}
	\end{equation*}
	Then symmetry relation \eqref{Psymm} implies that
	\begin{equation*}
		|\gamma_0|=1,\quad
		\hat\eta_j=
		\frac{1}{\overline{\eta}_j},
		\quad j=1,\dots,n.
	\end{equation*}
	
	
	Finally, $M(x,t,k)$ has the following singularity as $k$ approaches $-B$ from the upper half-plane:
	\begin{equation*}
		\begin{split}
			&M(x,t,k)
			\begin{pmatrix}
				(k+B)^{-1}& 0\\
				0& k+B
			\end{pmatrix}
			=\mathcal{O}(1),\quad
			k\to-B+\I 0,\\
			&M(x,t,k)=\mathcal{O}(1),\quad
			k\to-B-\I 0.
		\end{split}
	\end{equation*}
	
	The solution of the Cauchy problem
	\eqref{IVP} can be retrieved in terms of $M(x,t,k)$ as follows:
	\begin{equation}\label{sol}
		q(x,t)=2\I\lim_{k\to\infty}
		kM_{1,2}(x,t,k),
	\end{equation}
	and
	\begin{equation}\label{sol1}
		q(-x,t)=-2\I\lim_{k\to\infty}
		k\overline{M_{2,1}}(x,t,k).
	\end{equation}
	
	\section{Long-time asymptotics}
	
	\subsection{Phase function and jump factorizations}
	Introduce the variable $\xi:=\frac{x}{4t}$ and the phase function
	$\theta$ as follows:
	\begin{equation}\label{theta}
		\theta(k,\xi)=4k\xi+2k^2.
	\end{equation}
	Then the jump matrix $J(x,t,k)$, see \eqref{jump}, can be written in the form of triangular factorizations:
	\begin{subequations}\label{tr}
		\begin{align}
			\label{tr1}
			J(x,t,k)&=
			\begin{pmatrix}
				1& 0\\
				\frac{r_1(k)}{1+ r_1(k)r_2(k)}e^{2\I t\theta}& 1\\
			\end{pmatrix}
			\begin{pmatrix}
				1+ r_1(k)r_2(k)& 0\\
				0& \frac{1}{1+ r_1(k)r_2(k)}\\
			\end{pmatrix}
			\begin{pmatrix}
				1& \frac{ r_2(k)}{1+ r_1(k)r_2(k)}e^{-2\I t\theta}\\
				0& 1\\
			\end{pmatrix}
			\\
			\label{tr2}
			&=
			\begin{pmatrix}
				1& r_2(k)e^{-2\I t\theta}\\
				0& 1\\
			\end{pmatrix}
			\begin{pmatrix}
				1& 0\\
				r_1(k)e^{2\I t\theta}& 1\\
			\end{pmatrix}.
		\end{align}
	\end{subequations}
	Notice that the phase function $\theta$ as well as triangular factorizations \eqref{tr} are similar as in the case of the conventional NLS equation,
	see \cite{DIZ93}.
	The sign of $\Imm\,\theta(k,\xi)$ depending on $k\in\C$ (the so-called signature table) is given in Figure \ref{stab}.
	
	\begin{figure}
		\begin{minipage}[h]{0.99\linewidth}
			\centering{\includegraphics[width=0.4\linewidth]{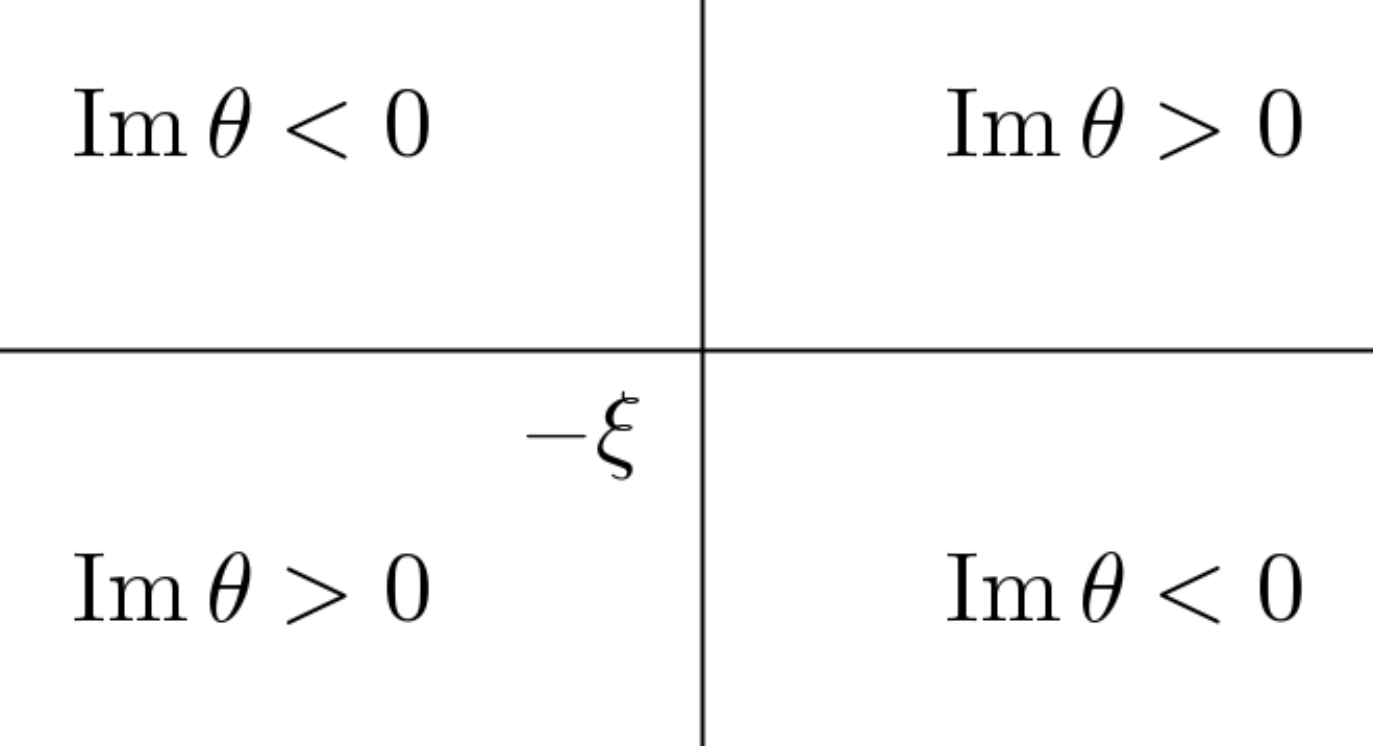}}
			\caption{Signature table of 
				$\theta(k,\xi)$.}
			\label{stab}
		\end{minipage}
	\end{figure}
	
	\subsection{Asymptotics in Case II with $n=0$}
	\label{S1}
	\subsubsection{Region $|\xi|>-B$, $B<0$}
	\label{R1}
	
	Taking into account \eqref{sol} and \eqref{sol1}, it is enough to consider 
	$-\xi<B$ only.
	Then in order to apply the ``opening lenses'' procedure, one should get rid of the diagonal factor in factorization \eqref{tr1}.
	To this end we define an auxiliary scalar sectionally analytic function $\delta(k,\xi)$, which satisfies the following RH problem:
	\begin{equation}\label{delRH}
		\begin{split}
			&\delta_+(k,\xi)=\delta_-(k,\xi)
			(1+r_1(k)r_2(k)),\quad
			k\in(-\infty,-\xi),\\
			&\delta(k,\xi)=1
			+\mathcal{O}\left(k^{-1}\right),
			\quad k\to\infty.
		\end{split}
	\end{equation}
	Scalar RH problem \eqref{delRH} can be easily reduced to the additive one, which is then solved by the using the Plemelj-Sokhotski formula.
	Thus, the solution is given in terms of the Cauchy type integral as follows:
	\begin{equation}
		\label{ddef}
		\delta(k,\xi)=\exp
		\left(
		\frac{1}{2\pi\I}
		\int_{-\infty}^{-\xi}
		\frac{\log(1+r_1(\zeta)r_2(\zeta))}
		{\zeta-k}\,d\zeta
		\right).
	\end{equation}
	Integrating by parts, we obtain the following representation for $\delta$:
	\begin{equation}\label{dsing}
		\delta(k,\xi)=
		(k+\xi)^{\I\nu(-\xi)}e^{\chi(k,\xi)},
	\end{equation}
	where
	\begin{equation}\label{chi}
		\chi(k,\xi)=-\frac{1}{2\pi\I}
		\int_{-\infty}^{-\xi}
		\log(k-\zeta)d_{\zeta}
		\log(1+ r_1(\zeta)r_2(\zeta)),
	\end{equation}
	and (see \eqref{arga} and \eqref{r-a})
	\begin{equation}\label{nu}
		\begin{split}
			\nu(-\xi)&=-\frac{1}{2\pi}
			\log(1+r_1(-\xi)r_2(-\xi))\\
			&=-\frac{1}{2\pi}\log|1+r_1(-\xi)r_2(-\xi)|
			-\frac{\I}{2\pi}
			\int_{-\infty}^{-\xi}\,
			d\arg(1+ r_1(\zeta)r_2(\zeta)).
		\end{split}
	\end{equation}
	Notice that the assumption on the winding of the argument, see II.2) in Case II, and \eqref{r-a} imply that 
	$\Imm\,\nu(-\xi)\in\left(-\frac{1}{2},\frac{1}{2}\right)$.
	
	\textbf{First transformation.}
	Using $\delta(k,\xi)$, define $\tilde{M}(x,t,k)$ as follows:
	\begin{equation*}
		\tilde{M}(x,t,k)=M(x,t,k)
		\delta^{-\sigma_3}(k,\xi).
	\end{equation*}
	Then $\tilde{M}(x,t,k)$ satisfies the following RH problem:
	\begin{description}
		\item [RH problem for 
		$\tilde{M}(x,t,k)$, $-\xi<B<0$]
		\indent
		\begin{enumerate}
			\item jump condition:
			\begin{equation}\label{til-M}
				\tilde{M}_+(x,t,k)
				=\tilde{M}_-(x,t,k)
				\tilde{J}(x,t,k),\quad
				k\in\R\setminus\{-\xi,-B\},
			\end{equation}
			with (we drop the arguments of $\tilde{J}$ here)
			\begin{equation}\label{til-J}
				\tilde{J}=
				\begin{cases}
					\begin{pmatrix}
						1& 0\\
						\frac{r_1(k)\delta_-^{-2}(k,\xi)}
						{1+r_1(k)r_2(k)}
						e^{2\I t\theta}& 1\\
					\end{pmatrix}
					\begin{pmatrix}
						1& 
						\frac{r_2(k)\delta_+^{2}(k,\xi)}
						{1+r_1(k)r_2(k)}
						e^{-2\I t\theta}\\
						0& 1\\
					\end{pmatrix},\,& k\in(-\infty,-\xi),
					\\
					\begin{pmatrix}
						1& r_2(k)\delta^2(k,\xi)
						e^{-2\I t\theta}\\
						0& 1\\
					\end{pmatrix}
					\begin{pmatrix}
						1& 0\\
						r_1(k)\delta^{-2}(k,\xi)
						e^{2\I t\theta}& 1\\
					\end{pmatrix},\,& k\in(-\xi,\infty)\setminus\{-B\};
				\end{cases}
			\end{equation}
			
			\item normalization condition at infinity:
			\begin{equation*}
				\tilde{M}(x,t,k)=I
				+\mathcal{O}\left(k^{-1}\right)
				,\quad k\to\infty;
			\end{equation*}
			
			\item singularities at $k=-\xi$ and
			$k=-B$ (see \eqref{nu}):
			\begin{equation*}
				\begin{split}
					&\tilde{M}(x,t,k)
					\begin{pmatrix}
						(k+\xi)^{-\Imm\,\nu(-\xi)}& 0\\
						0& (k+\xi)^{\Imm\,\nu(-\xi)}
					\end{pmatrix}
					=\mathcal{O}(1),\quad
					k\to -\xi,
				\end{split}
			\end{equation*}
			and
			\begin{equation}
				\label{til-M--B}
				\begin{split}
					&\tilde{M}(x,t,k)
					\begin{pmatrix}
						(k+B)^{-1}& 0\\
						0& k+B
					\end{pmatrix}
					=\mathcal{O}(1),\quad
					k\to-B+\I 0,\\
					&\tilde{M}(x,t,k)
					=\mathcal{O}(1),\quad
					k\to-B-\I 0.
				\end{split}
			\end{equation}
		\end{enumerate}
	\end{description}
	
	\textbf{Second transformation.}
	Now we are at the position to transform the RH problem for $\tilde{M}$ in such a way that the jump matrix of the transformed problem converges, as $t\to\infty$, to the identity matrix.
	Let us define $\hat{M}$ as follows 
	(see Figure \ref{F-1}):
	\begin{equation}\label{hat-M}
		\hat{M}(x,t,k)=
		\begin{cases}
			\tilde{M}(x,t,k),
			& k\in\hat\Omega_0,\\
			\tilde{M}(x,t,k)
			\begin{pmatrix}
				1& \frac{-r_2(k)\delta^{2}(k,\xi)}
				{1+r_1(k)r_2(k)}
				e^{-2\I t\theta}\\
				0& 1\\
			\end{pmatrix},
			& k\in\hat\Omega_1,\\
			\tilde{M}(x,t,k)
			\begin{pmatrix}
				1& 0\\
				-r_1(k)\delta^{-2}(k,\xi)
				e^{2\I t\theta}& 1\\
			\end{pmatrix},
			& k\in\hat\Omega_2,\\
			\tilde{M}(x,t,k)
			\begin{pmatrix}
				1& r_2(k)\delta^2(k,\xi)
				e^{-2\I t\theta}\\
				0& 1\\
			\end{pmatrix},
			& k\in\hat\Omega_3,\\
			\tilde{M}(x,t,k)
			\begin{pmatrix}
				1& 0\\
				\frac{r_1(k)\delta^{-2}(k,\xi)}
				{1+r_1(k)r_2(k)}
				e^{2\I t\theta}& 1\\
			\end{pmatrix},
			& k\in\hat\Omega_4.
		\end{cases}
	\end{equation}
	\begin{figure}
		\begin{minipage}[h]{0.99\linewidth}
			\centering{\includegraphics[width=0.7\linewidth]{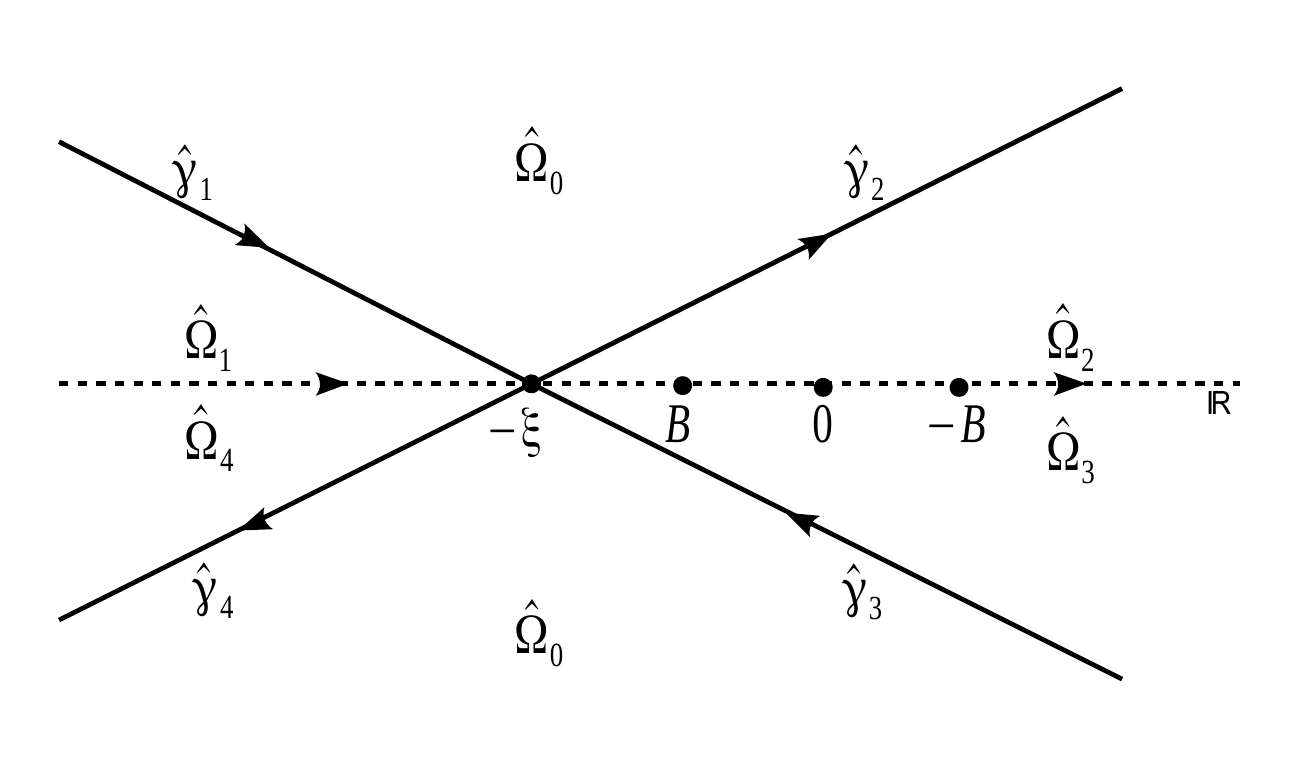}}
			\caption{Contour $\hat\Gamma
				=\hat\gamma_1\cup\dots
				\cup\hat\gamma_4$
				and domains $\hat\Omega_j$,
				$j=0,\dots,4$ in Section \ref{R1}.}
		\label{F-1}
	\end{minipage}
\end{figure}
Then using \eqref{til-M}, \eqref{til-J} and \eqref{hat-M}, we obtain that $\hat{M}(x,t,k)$ satisfies the following  jump condition on the cross 
$\hat\Gamma=\bigcup\limits_{j=1}^4
\hat\gamma_j$ (see Figure \ref{F-1}):
\begin{equation}\label{hat-M-j}
	\hat{M}_+(x,t,k)
	=\hat{M}_-(x,t,k)
	\hat{J}(x,t,k),\quad
	k\in\hat\Gamma,
\end{equation}
with the jump matrix $\hat{J}(x,t,k)$ given by (we drop the arguments of $\hat{J}$ here)
\begin{equation}
	\label{hat-J}
	\hat{J}=
	\begin{cases}
		\begin{pmatrix}
			1& \frac{r_2(k)\delta^{2}(k,\xi)}
			{1+r_1(k)r_2(k)}
			e^{-2\I t\theta}\\
			0& 1\\
		\end{pmatrix},
		& k\in\hat\gamma_1,\\
		\begin{pmatrix}
			1& 0\\
			r_1(k)\delta^{-2}(k,\xi)
			e^{2\I t\theta}& 1\\
		\end{pmatrix},
		& k\in\hat\gamma_2,\\
		\begin{pmatrix}
			1& -r_2(k)\delta^2(k,\xi)
			e^{-2\I t\theta}\\
			0& 1\\
		\end{pmatrix},
		& k\in\hat\gamma_3,\\
		\begin{pmatrix}
			1& 0\\
			\frac{-r_1(k)\delta^{-2}(k,\xi)}
			{1+r_1(k)r_2(k)}
			e^{2\I t\theta}& 1\\
		\end{pmatrix},
		& k\in\hat\gamma_4.
	\end{cases}
\end{equation}

Let us study the behavior of $\hat{M}$ in a neighborhood of the singular point $k=-B$.
From \eqref{hat-M}, \eqref{til-M--B} and \eqref{rj-s}
we have
\begin{equation}\label{hat-M-s}
	\begin{split}
		&\hat{M}(x,t,k)
		\begin{pmatrix}
			(k+B)^{-1}&0\\
			\frac{b(-B)}{a_1^{-B}}
			\delta^{-2}(-B,\xi)
			e^{2\I t\theta(-B,\xi)}& k+B
		\end{pmatrix}
		=\mathcal{O}(1),\quad
		k\to-B+\I0,\\
		&\hat{M}(x,t,k)
		\begin{pmatrix}
			1
			&\frac{\overline{b^B}}{a_2(B)}
			\delta^2(-B,\xi)
			e^{-2\I t\theta(-B,\xi)}
			(k+B)^{-1}\\
			0& 1
		\end{pmatrix}
		=\mathcal{O}(1),\quad
		k\to-B-\I0.
	\end{split}
\end{equation}
Using \eqref{scoeff} and recalling that 
$a_2(-B)=\overline{a_2}(B)$, we obtain
\begin{equation}\label{sc-res}
	\frac{a_1^{-B}}{b(-B)}
	=\frac{\overline{b^B}}{a_2(-B)}
	=-\frac{A}{2\I}.
\end{equation}
Combining \eqref{hat-M-s} and \eqref{sc-res}, we conclude that $\hat{M}^{(2)}$ satisfies the following residue condition at $k=-B$:
\begin{equation}\label{res--B}
	\underset{k=-B}
	{\operatorname{Res}}
	\hat{M}^{(2)}(x,t,k)=
	\frac{A}{2\I}
	\delta^2(-B,\xi)
	e^{2\I Bx-4\I B^2t}
	\hat{M}^{(1)}(x,t,-B).
\end{equation}
Thus we arrive at the following
\begin{description}
	\item [RH problem for $\hat{M}(x,t,k)$, $-\xi<B<0$]
	\indent
	\begin{enumerate}
		\item jump condition \eqref{hat-M-j}--\eqref{hat-J} on the cross $\hat\Gamma$ given in Figure \ref{F-1}; 
		
		\item normalization condition at infinity:
		\begin{equation*}
			\hat{M}(x,t,k)=I
			+\mathcal{O}\left(k^{-1}\right)
			,\quad k\to\infty;
		\end{equation*}
		
		\item residue condition \eqref{res--B} at $k=-B$;
		\item singularity at $k=-\xi$ (see \eqref{nu}):
		\begin{equation*}
			\begin{split}
				&\hat{M}(x,t,k)
				\begin{pmatrix}
					(k+\xi)^{-\Imm\,\nu(-\xi)}& 0\\
					0& (k+\xi)^{\Imm\,\nu(-\xi)}
				\end{pmatrix}
				=\mathcal{O}(1),\quad
				k\to -\xi.
			\end{split}
		\end{equation*}
	\end{enumerate}
\end{description}

Recalling that $\Imm\,\nu(-\xi)\in\left(-\frac{1}{2},\frac{1}{2}\right)$, we conclude that the RH problem on the cross for 
$\hat{M}$ vanishes, as per \cite{RS19}. 
Then using inverse transform \eqref{sol} and \eqref{sol1}, we obtain the rough asymptotics for the solution $q(x,t)$ presented in Proposition \ref{RA1}, item 2), for $|\xi|>-B$.

\subsubsection{Region $0\leq|\xi|<-B$, $B<0$}
\label{R2}
For such values of $\xi$ the main challenge is to get rid of the diagonal factor in \eqref{tr1}.
Here the jump matrix of the RH problem for the auxiliary scalar function $\delta$ has a zero on the contour at $k=B$, see \eqref{delRH} and \eqref{r-a}.
Therefore one cannot directly apply the Plemelj-Sokhotski formula to this problem (cf.\,\,\eqref{ddef}) and should take additional care of the singularity at $k=B$.

Take arbitrary $\tilde{k}_0\in\C^+$.
Define a scalar function $\delta_1(k,\xi;B)$ as a solution of the following RH problem:
\begin{equation}\label{del1RH}
	\begin{split}
		&\delta_{1+}(k,\xi;B)
		=\delta_{1-}(k,\xi;B)
		\frac{k+\tilde{k}_0}{k-B}
		(1+r_1(k)r_2(k)),\quad
		k\in(-\infty,-\xi)\setminus\{B\},\\
		&\delta_1(k,\xi;B)=1
		+\mathcal{O}\left(k^{-1}\right),
		\quad k\to\infty.
	\end{split}
\end{equation}
Since the jump matrix in \eqref{del1RH} is nonzero at $k=B$ (see \eqref{r-a} and Assumption A.1)--A.2)) and it converges to $1$ as $k\to-\infty$, we have, by the Plemelj-Sokhotski formula, that
\begin{equation}
	\label{d1def}
	\begin{split}
		&\delta_1(k,\xi;B)=\exp
		\left(
		\frac{1}{2\pi\I}
		\int_{-\infty}^{-\xi}
		\frac{\log\left(
			\frac{\zeta+\tilde{k}_0}{\zeta-B}
			(1+r_1(\zeta)r_2(\zeta))
			\right)}
		{\zeta-k}\,d\zeta
		\right),\\
		&\delta_1(k,\xi;B)
		=\mathcal{O}(1),\quad k\to B.
	\end{split}
\end{equation}
Then define a scalar function $\delta_2$ as a solution of the RH problem on the contour $(-\xi,\infty)$, which reads:
\begin{equation}\label{del2RH}
	\begin{split}
		&\delta_{2+}(k,\xi;B)
		=\delta_{2-}(k,\xi;B)
		\frac{k+\tilde{k}_0}{k-B},\quad
		k\in(-\xi,\infty),\\
		&\delta_2(k,\xi;B)=1
		+\mathcal{O}\left(k^{-1}\right),
		\quad k\to\infty.
	\end{split}
\end{equation}
A unique solution of \eqref{del2RH} is given by
\begin{equation}
	\label{d2def}
	\delta_2(k,\xi;B)=\exp
	\left(
	\frac{1}{2\pi\I}
	\int_{-\xi}^{\infty}
	\frac{\log\left(
		\frac{\zeta+\tilde{k}_0}{\zeta-B}
		\right)}
	{\zeta-k}\,d\zeta
	\right).
\end{equation}
Finally, using $\delta_1$ and $\delta_2$, we define $\hat\delta(k,\xi;B)$ as follows:
\begin{equation}
	\label{hat-del}
	\hat\delta(k,\xi;B)=
	\begin{cases}
		\frac{k-B}{k+\tilde{k}_0}
		\delta_1(k,\xi;B)
		\delta_2(k,\xi;B),
		&k\in\C^+,\\
		\delta_1(k,\xi;B)
		\delta_2(k,\xi;B),
		&k\in\C^-.
	\end{cases}
\end{equation}

Integrating by parts in \eqref{d1def} and \eqref{d2def}, we conclude that the product $\delta_1\delta_2$ has the following behavior at $k=-\xi$
(cf.\,\,\eqref{dsing})
\begin{equation}\label{d1d2s}
	\delta_1(k,\xi;B)\delta_2(k,\xi;B)=
	(k+\xi)^{\I\nu(-\xi)}
	e^{\chi_1(k,\xi;B)+\chi_2(k,\xi;B)},
\end{equation}
where $\nu(-\xi)$ has the same expression as in \eqref{nu} and (cf.\,\,\eqref{chi})
\begin{equation}
	\label{chi1,2}
	\begin{split}
		&\chi_1(k,\xi;B)=-\frac{1}{2\pi\I}
		\int_{-\infty}^{-\xi}
		\log(k-\zeta)d_{\zeta}
		\log\left(
		\frac{\zeta+\tilde{k}_0}{\zeta-B}
		(1+ r_1(\zeta)r_2(\zeta))\right),\\
		&\chi_2(k,\xi;B)=-\frac{1}{2\pi\I}
		\int_{-\xi}^{\infty}
		\log(k-\zeta)d_{\zeta}
		\log\left(
		\frac{\zeta+\tilde{k}_0}{\zeta-B}
		\right).
	\end{split}
\end{equation}
Therefore $\hat\delta$ can be written in the form (see \eqref{nu} and \eqref{chi1,2})
\begin{equation}
	\label{hat-del-s}
	\hat\delta(k,\xi;B)=
	\begin{cases}
		(k+\xi)^{\I\nu(-\xi)}
		\frac{k-B}{k+\tilde{k}_0}
		e^{\chi_1(k,\xi;B)+\chi_2(k,\xi;B)},
		&k\in\C^+,\\
		(k+\xi)^{\I\nu(-\xi)}
		e^{\chi_1(k,\xi;B)+\chi_2(k,\xi;B)},
		&k\in\C^-.
	\end{cases}
\end{equation}

Combining \eqref{del1RH}, \eqref{del2RH} and \eqref{hat-del}, we conclude that $\hat{\delta}$ satisfies the following scalar RH problem:
\begin{description}
	\item [RH problem for 
	$\hat{\delta}(k,\xi;B)$]
	\indent
	\begin{enumerate}
		\item jump condition:
		\begin{equation}\label{hat-d-j}
			\hat{\delta}_+(k,\xi;B)
			=\hat{\delta}_-(k,\xi;B)
			(1+r_1(k)r_2(k)),\quad
			k\in(-\infty,-\xi)
			\setminus\{B\};
		\end{equation}
		\item normalization condition:
		\begin{equation}\label{hat-d-n}
			\hat\delta(k,\xi;B)
			=1+\mathcal{O}(1),
			\quad k\to\infty;
		\end{equation}
		\item behavior at $k=B$:
		\begin{equation*}
			\begin{split}
				&\hat\delta(k,\xi;B)=
				(k-B)\frac{\delta_{1+}(B,\xi;B)
					\delta_2(B,\xi;B)}
				{B+\tilde{k}_0}
				+\mathcal{O}\left((k-B)^2\right),
				\quad k\to B+\I0,\\
				&\hat\delta(k,\xi;B)=
				\delta_{1-}(B,\xi;B)
				\delta_2(B,\xi;B)
				+\mathcal{O}(k-B),
				\quad k\to B-\I0;
			\end{split}
		\end{equation*}
		\item singularity at $k=-\xi$ is described in \eqref{hat-del-s}.
	\end{enumerate}
\end{description}

\begin{figure}
	\begin{minipage}[h]{0.99\linewidth}
		\centering{\includegraphics[width=0.7\linewidth]{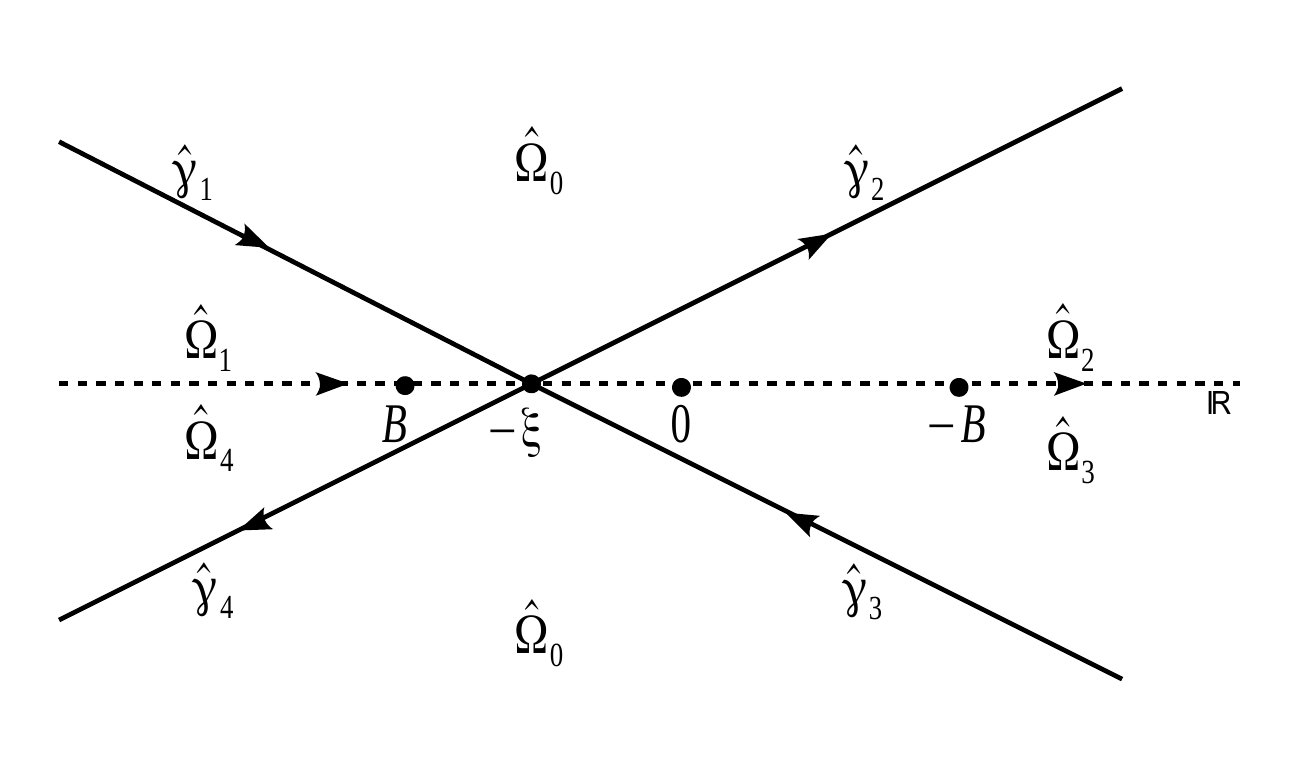}}
		\caption{Contour $\hat\Gamma
			=\hat\gamma_1\cup\dots
			\cup\hat\gamma_4$
			and domains $\hat\Omega_j$,
			$j=0,\dots,4$ in Section \ref{R2}.} 
	\label{F-2}
\end{minipage}
\end{figure}

Now we make the same transformations 
$M\rightsquigarrow\tilde{M}
\rightsquigarrow\hat{M}$
as in Section \ref{R1}, but with 
$\hat{\delta}(k,\xi;B)$ instead of $\delta(k,\xi)$ and with
domains $\hat\Omega_j$, $j=0,\dots,4$,
and contour $\hat\Gamma$ given in Figure \ref{F-2}.
Direct calculations show that 
$\hat{M}(x,t,k)$ has the following behavior at $k=B$ (we drop the arguments of $\hat{M}(x,t,k)$ and write $\delta_{1\pm}(B,\xi)=\delta_{1\pm}(B,\xi;B)$,
$\delta_2(B,\xi)=\delta_2(B,\xi;B)$ here):
\begin{equation}\label{h-M-s}
\begin{split}
	&\hat{M}
	\begin{pmatrix}
		\frac{k-B}{k+\tilde{k}_0}
		(\delta_{1+}\delta_2)(B,\xi)
		&\frac{a_1^B}
		{k+\tilde{k}_0}
		(r_2a_2)(B)
		(\delta_{1+}\delta_2)(B,\xi)
		e^{-2\I t\theta(B,\xi)}\\
		0
		&\frac{k+\tilde{k}_0}{k-B}
		(\delta_{1+}
		\delta_2)^{-1}(B,\xi)
	\end{pmatrix}
	=\mathcal{O}(1),\quad
	k\to B+\I0,\\
	&\hat{M}
	\begin{pmatrix}
		(\delta_{1-}\delta_2)(B,\xi)
		&0\\
		-\frac{a_1^{B}}{k-B}
		(r_1a_2)(B)
		(\delta_{1-}
		\delta_2)^{-1}(B,\xi)
		e^{2\I t\theta(B,\xi)}
		&(\delta_{1-}
		\delta_2)^{-1}(B,\xi)
	\end{pmatrix}
	=\mathcal{O}(1),\quad
	k\to B-\I0.
\end{split}
\end{equation}

Expansions \eqref{h-M-s} imply that the residue condition of $\hat M^{(1)}$ can be written as follows:
\begin{equation}\label{res-B1}
\underset{k=B}
{\operatorname{Res}}
\hat{M}^{(1)}(x,t,k)=
-(B+\tilde{k}_0)^2
\frac{
	\left(
	\delta_{1+}\delta_2
	\right)^{-2}(B,\xi;B)}
{a_1^B(r_2a_2)(B)}
e^{2\I Bx+4\I B^2t}
\hat{M}^{(2)}(x,t,B),
\end{equation}
or
\begin{equation}\label{res-B2}
\underset{k=B}
{\operatorname{Res}}
\hat{M}^{(1)}(x,t,k)=
a_1^B(r_1a_2)(B)
\left(
\delta_{1-}\delta_2
\right)^{-2}(B,\xi;B)
e^{2\I Bx+4\I B^2t}
\hat{M}^{(2)}(x,t,B).
\end{equation}
Taking into account the jump condition of $\delta_1$ at $k=B$ (see \eqref{del1RH},
\eqref{r-a} and \eqref{a1pmB})
$$
\delta_{1+}(B,\xi;B)
=\delta_{1-}(B,\xi;B)
\frac{B+\tilde{k}_0}{a_1^Ba_2(B)},
$$
and using \eqref{scoeff}, \eqref{rj-s}, we conclude that (see \eqref{res-B1}; for simplicity we drop the third argument of $\delta_{1\pm}(B,\xi;B)$ and
$\delta_2(B,\xi;B)$ here)
\begin{equation*}
-(B+\tilde{k}_0)^2
\frac{
	\left(
	\delta_{1+}\delta_2
	\right)^{-2}(B,\xi)}
{a_1^B(r_2a_2)(B)}
=-a_1^Ba_2(B)
\frac{
	\left(
	\delta_{1-}\delta_2
	\right)^{-2}(B,\xi)}
{r_2(B)}
=\frac{A}{2\I}a_2^2(B)
\left(
\delta_{1-}\delta_2
\right)^{-2}(B,\xi).
\end{equation*}
Using \eqref{scoeff} and \eqref{rj-s} in \eqref{res-B2} we obtain
\begin{equation*}
a_1^B(r_1a_2)(B)
=b^Ba_2(B)
=\frac{A}{2\I}a_2^2(B),
\end{equation*}
which implies that \eqref{res-B1} and \eqref{res-B2} are consistent.
Thus, eventually we arrive at the following residue condition at $k=B$:
\begin{equation}\label{res-B}
\underset{k=B}
{\operatorname{Res}}
\hat{M}^{(1)}(x,t,k)=
\frac{A}{2\I}a_2^2(B)
\hat\delta_-^{-2}
(B,\xi;B)
e^{2\I Bx+4\I B^2t}
\hat{M}^{(2)}(x,t,B).
\end{equation}
The Riemann-Hilbert problem for $\hat{M}$ reads as follows:
\begin{description}
\item [RH problem for $\hat{M}(x,t,k)$, $0\leq|\xi|<-B$]
\indent
\begin{enumerate}
	\item jump condition \eqref{hat-M-j}--\eqref{hat-J} with $\hat\delta(k,\xi;B)$ (see \eqref{hat-del})
	instead of $\delta(k,\xi)$ on the cross $\hat\Gamma$ given in Figure \ref{F-2}; 
	
	\item normalization condition at infinity:
	\begin{equation*}
		\hat{M}(x,t,k)=I
		+\mathcal{O}\left(k^{-1}\right)
		,\quad k\to\infty;
	\end{equation*}
	
	\item residue condition
	\eqref{res-B} at $k=B$;
	
	\item residue condition \eqref{res--B} at $k=-B$ with $\hat\delta(k,\xi;B)$
	instead of $\delta(k,\xi)$;
	
	\item singularity at $k=-\xi$ (see \eqref{nu}):
	\begin{equation*}
		\begin{split}
			&\hat{M}(x,t,k)
			\begin{pmatrix}
				(k+\xi)^{-\Imm\,\nu(-\xi)}& 0\\
				0& (k+\xi)^{\Imm\,\nu(-\xi)}
			\end{pmatrix}
			=\mathcal{O}(1),\quad
			k\to -\xi.
		\end{split}
	\end{equation*}
\end{enumerate}
\end{description}

Now let us calculate the rough asymptotics of $q(x,t)$ as 
$t\to\infty$.
\begin{proposition}[Rough asymptotics, $0\leq|\xi|<-B$]
In the Case II with $n=0$ and $B<0$ we have the long-time asymptotic behavior of $q(x,t)$ given in Proposition \ref{RA1}, item 2) for
$0\leq|\xi|<-B$:
\begin{equation}
	\label{ras1}
	q(x,t)=
	\frac{16AB^2\hat{\delta}^2(-B,\xi;B)
		e^{2\I Bx-4\I B^2t}}
	{16B^2
		-A^2\left(\overline{\hat{\delta}}\right)^2
		(-B,-\xi;B)
		\hat{\delta}^2(-B,\xi;B)
		e^{4\I Bx}}
	+\so(1),\quad t\to\infty.
\end{equation}
Here the asymptotic formula is valid away from the arbitrary small neighborhoods of possible zeros of the denominator, and $\hat\delta(k,\xi;B)$ is given in \eqref{hat-del} (see also \eqref{hat-d-j}).
\end{proposition}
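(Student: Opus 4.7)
The strategy is a standard small-norm argument: reduce the RH problem for $\hat M$ to a model problem with no jump, supported only by the two residues at $k=\pm B$, which is then solved explicitly by a rational ansatz.

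First, on the cross $\hat\Gamma$ the jump matrix $\hat J$ in \eqref{hat-J} converges to $I$ as $t\to\infty$, uniformly away from an arbitrarily small neighborhood of the stationary point $k=-\xi$. This is immediate from the signature table in Figure \ref{stab}: on each ray $\hat\gamma_j$ the exponential entering $\hat J$ has strictly negative real part, and the powers of $\hat\delta$ are bounded on $\hat\Gamma$ by \eqref{hat-del-s}. Near $k=-\xi$ the bound $\Imm\,\nu(-\xi)\in(-\tfrac12,\tfrac12)$ makes the singularity of $\hat\delta$ integrable, and the cross-type small-norm and local-parametrix machinery of \cite{RS19} applies, producing an $\so(1)$ error.

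Second, I introduce the model problem $M^{\mathrm{mod}}(x,t,k)$, which is meromorphic in $\mathbb{C}\setminus\{\pm B\}$, normalized by $M^{\mathrm{mod}}\to I$ at infinity, and subject to the residue conditions \eqref{res-B} at $k=B$ and \eqref{res--B} (with $\hat\delta$ in place of $\delta$) at $k=-B$. Making the rational ansatz
\begin{equation*}
M^{\mathrm{mod}}(k)=I+\frac{1}{k-B}\begin{pmatrix}a & 0\\ c & 0\end{pmatrix}+\frac{1}{k+B}\begin{pmatrix}0 & b\\ 0 & d\end{pmatrix},
\end{equation*}
and imposing the residue conditions, one obtains a linear $4\times 4$ system for $(a,b,c,d)$. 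Writing the residue prefactors as $C_1=\tfrac{A}{2\I}a_2^2(B)\hat\delta_-^{-2}(B,\xi;B)e^{2\I Bx+4\I B^2t}$ and $C_2=\tfrac{A}{2\I}\hat\delta^2(-B,\xi;B)e^{2\I Bx-4\I B^2t}$, elimination gives $a=C_1 b/(2B)$, $d=-C_2c/(2B)$, and finally $b=4B^2C_2/(4B^2+C_1C_2)$. Extracting $q(x,t)\sim 2\I b$ via \eqref{sol} and simplifying yields
\begin{equation*}
q(x,t)\sim\frac{16AB^2\,\hat\delta^2(-B,\xi;B)\,e^{2\I Bx-4\I B^2t}}{16B^2-A^2\,a_2^2(B)\,\hat\delta_-^{-2}(B,\xi;B)\,\hat\delta^2(-B,\xi;B)\,e^{4\I Bx}},
\end{equation*}
up to an $\so(1)$ correction from the cross.

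The remaining, and most delicate, step is to match this expression with the one displayed in \eqref{ras1}. This reduces to the scalar identity $a_2^2(B)\hat\delta_-^{-2}(B,\xi;B)=(\overline{\hat\delta})^2(-B,-\xi;B)$, which I would prove by combining four ingredients: the integral representation \eqref{hat-del-s} of $\hat\delta$, the conjugation symmetry \eqref{r-sym} of $r_1r_2$ under $k\mapsto -k$, the determinant relation \eqref{r-a} rewriting $1+r_1r_2$ in terms of $(a_1a_2)^{-1}$, and the jump of $\delta_1$ at $k=B$ from \eqref{del1RH}, which brings the factor $a_2(B)$ into $\hat\delta_-(B,\xi;B)$. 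Once this symmetry identity is in hand, substitution converts the expression above into \eqref{ras1}. The entire comparison is valid only away from arbitrarily small neighborhoods of the zeros of the resulting denominator: at such $(x,t)$ the model matrix $M^{\mathrm{mod}}$ itself ceases to be bounded, the small-norm estimate loses uniformity, and the approximation breaks down, which is precisely the qualification stated in the proposition.
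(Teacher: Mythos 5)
Your proposal is correct and follows essentially the same route as the paper: the cross jump is discarded as $t\to\infty$ (using $\Imm\,\nu(-\xi)\in(-\tfrac12,\tfrac12)$ and the machinery of \cite{RS19}), the limiting object is the meromorphic model problem with the two residue conditions \eqref{res-B} and \eqref{res--B}, solved by the same rational ansatz, and your $b=4B^2C_2/(4B^2+C_1C_2)$ reproduces the paper's $A_3$ in \eqref{Aj}. The final scalar identity you isolate, $a_2^2(B)\hat\delta_-^{-2}(B,\xi;B)=\bigl(\overline{\hat\delta}\bigr)^2(-B,-\xi;B)$, is exactly the square of the paper's \eqref{a_2(B)}, and the four ingredients you list (the representation \eqref{hat-del-s}, the symmetry \eqref{r-sym}, the relation \eqref{r-a} recast as a scalar RH problem for $a_1^{-1}$ and $a_2$, and the jump of $\delta_1$ at $k=B$) are precisely those used in the paper's derivation via \eqref{hdBxi}--\eqref{a-2exp}, so only the routine Cauchy-integral computation is left unwritten.
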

\begin{proof}
Since the jump matrix in the RH problem for $\hat{M}$ vanishes as $t\to\infty$, we have
\begin{equation*}
	\hat{M}(x,t,k)
	=\hat{M}^{\mathrm{as}}
	(x,t,k)
	+\so(1),\quad t\to\infty,
\end{equation*}
where 
$\hat{M}^{\mathrm{as}}$
is a meromorphic $2\times2$ matrix which satisfies the following conditions:
\begin{equation}
	\label{RHMas}
	\begin{split}
		&\underset{k=B}
		{\operatorname{Res}}
		\left(
		\hat{M}^{\mathrm{as}}
		\right)^{(1)}(x,t,k)=
		c_1(x,t)
		\left(
		\hat{M}^{\mathrm{as}}
		\right)^{(2)}(x,t,B),\\
		&\underset{k=-B}
		{\operatorname{Res}}
		\left(
		\hat{M}^{\mathrm{as}}
		\right)^{(2)}(x,t,k)=
		c_2(x,t)
		\left(
		\hat{M}^{\mathrm{as}}
		\right)^{(1)}(x,t,-B),\\
		&\hat{M}^{\mathrm{as}}
		(x,t,k)=
		I+\mathcal{O}
		\left(k^{-1}\right)
		,\quad k\to\infty,
	\end{split}
\end{equation}
where (see \eqref{res-B} and
\eqref{res--B} with $\hat\delta$ instead of $\delta$)
\begin{equation}\label{cj}
	\begin{split}
		&c_1(x,t)=
		\frac{A}{2\I}a_2^2(B)
		\hat\delta_-^{-2}
		(B,\xi;B)
		e^{2\I Bx+4\I B^2t},\\
		&c_2(x,t)=
		\frac{A}{2\I}
		\hat{\delta}^2(-B,\xi;B)
		e^{2\I Bx-4\I B^2t}.
	\end{split}
\end{equation}
Let us seek the solution of \eqref{RHMas} in the form
\begin{equation*}
	\hat{M}^{\mathrm{as}}
	(x,t,k)=
	\begin{pmatrix}
		\frac{k+A_1}{k-B}&
		\frac{A_3}{k+B}\\
		\frac{A_2}{k-B}&
		\frac{k+A_4}{k+B}
	\end{pmatrix},
\end{equation*}
with some $A_j=A_j(x,t)$,
$j=1,\dots,4$.
From the residue conditions in \eqref{RHMas} we conclude that
\begin{equation}\label{Aj}
	\begin{split}
		&A_1(x,t)=\frac
		{B(c_1c_2)(x,t)-4B^3}
		{4B^2+(c_1c_2)(x,t)},\quad
		A_2(x,t)=\frac
		{4B^2c_1(x,t)}
		{4B^2+(c_1c_2)(x,t)},\\
		&A_3(x,t)=\frac
		{4B^2c_2(x,t)}
		{4B^2+(c_1c_2)(x,t)},\quad
		A_4(x,t)=\frac
		{4B^3-B(c_1c_2)(x,t)}
		{4B^2+(c_1c_2)(x,t)}.
	\end{split}
\end{equation}
We conclude from \eqref{sol} that
\begin{equation}\label{qas}
	\begin{split}
		&q(x,t)=2\I\lim_{k\to\infty}
		k\hat{M}_{1,2}^{\mathrm{as}}
		(x,t,k)+\so(1)
		=2\I A_3(x,t)+\so(1),
	\end{split}
\end{equation}
as $t\to\infty$, for $0\leq|\xi|<-B$.
To simplify the expression for $A_3$ given in \eqref{Aj}, let us show that (in what follows we will drop the third argument of 
$\hat\delta(k,\xi;B)$ and $\delta_j(k,\xi;B)$, $j=1,2$)
\begin{equation}\label{a_2(B)}
	a_2(B)=\hat\delta_-(B,\xi)
	\overline{\hat\delta}(-B,-\xi),\quad
	0\leq|\xi|<-B.
\end{equation}
Indeed, using the Plemelj-Sokhotski formula, we obtain from \eqref{d1def}, \eqref{d2def} and \eqref{hat-del} (recall \eqref{r-a})
\begin{equation}\label{hdBxi}
	\begin{split}
		\hat\delta_-(B,\xi)&=\hat\delta_{1-}(B,\xi)
		\hat\delta_2(B,\xi)\\
		&=\exp\left(
		-\frac{1}{2}\log\left(\frac{B+\tilde{k}_0}
		{a_1^Ba_2(B)}\right)
		+\frac{1}{2\pi\I}\mathrm{v.p.}
		\int_{-\infty}^{-\xi}
		\frac{\log\left(
			\frac{\zeta+\tilde{k}_0}{\zeta-B}
			(1+r_1(\zeta)r_2(\zeta))
			\right)}
		{\zeta-B}\,d\zeta
		\right)\\
		&\quad\times
		\exp\left(
		\frac{1}{2\pi\I}
		\int_{-\xi}^{\infty}
		\frac{\log\left(
			\frac{\zeta+\tilde{k}_0}{\zeta-B}
			\right)}
		{\zeta-B}\,d\zeta
		\right),
	\end{split}
\end{equation}
and
\begin{equation}\label{hd-B-xi}
	\begin{split}
		\overline{\hat\delta}(-B,-\xi)
		=&\exp\left(
		\frac{-1}{2\pi\I}
		\int_{-\infty}^{\xi}
		\frac{\log\left(
			\frac{\zeta+\overline{\tilde{k}_0}}
			{\zeta-B}
			(1+\overline{r_1}(\zeta)\overline{r_2}(\zeta))
			\right)}
		{\zeta+B}\,d\zeta\right)\\
		&\times\exp\left(
		-\frac{1}{2}\log\left(
		\frac{B-\overline{\tilde{k}_0}}{2B}\right)
		-\frac{1}{2\pi\I}
		\mathrm{v.p.}\int_\xi^\infty
		\frac{\log\left(
			\frac{\zeta+\overline{\tilde{k}_0}}
			{\zeta-B}
			\right)}
		{\zeta+B}\,d\zeta
		\right).
	\end{split}
\end{equation}
Using symmetry relation \eqref{r-sym}, we conclude from \eqref{hdBxi} and \eqref{hd-B-xi} that
\begin{equation}\label{delpr}
	\begin{split}
		\hat\delta_-(B,\xi)
		\overline{\hat\delta}(-B,-\xi)=
		\exp&\left(
		-\frac{1}{2}\log\left(
		\frac{\left(B+\tilde{k}_0\right)
			\left(B-\overline{\tilde{k}_0}\right)}
		{2Ba_1^Ba_2(B)}\right)\right.\\
		&\left.\quad+\frac{1}{2\pi\I}
		\mathrm{v.p.}\int_{-\infty}^{\infty}
		\frac{\log\left(
			\frac{\left(\zeta+\tilde{k}_0\right)
				\left(\zeta-\overline{\tilde{k}_0}\right)}
			{(\zeta-B)(\zeta+B)}
			(1+r_1(\zeta)r_2(\zeta))
			\right)}
		{\zeta-B}\,d\zeta
		\right).
	\end{split}
\end{equation}
Now we rewrite \eqref{r-a} as a Riemann-Hilbert problem for $a_1^{-1}(k)$ and $a_2(k)$ as follows:
\begin{equation*}
	a_1^{-1}(k)=a_2(k)(1+r_1(k)r_2(k)),\quad
	k\in\mathbb{R}.
\end{equation*}
Using the Plemelj-Sokhotski formula, we obtain the following expressions for $a_j(k)$, $j=1,2$ (recall that $\tilde{k}_0\in\C^+$):
\begin{equation*}
	\begin{split}
		a_1(k)=&\frac{\left(k+\tilde{k}_0\right)
			\left(k-\overline{\tilde{k}_0}\right)}
		{(k-B)(k+B)}\\
		&\times\exp\left(
		\frac{1}{2\pi\I}\int_{-\infty}^{\infty}
		\frac{\log\left(
			\frac{\left(\zeta+\tilde{k}_0\right)
				\left(\zeta-\overline{\tilde{k}_0}\right)}
			{(\zeta-B)(\zeta+B)}
			(1+r_1(\zeta)r_2(\zeta))
			\right)}
		{\zeta-k}\,d\zeta\right),\quad k\in\C^+,
	\end{split}
\end{equation*}
and
\begin{equation}\label{a-2exp}
	a_2(k)=\exp\left(
	\frac{1}{2\pi\I}\int_{-\infty}^{\infty}
	\frac{\log\left(
		\frac{\left(\zeta+\tilde{k}_0\right)
			\left(\zeta-\overline{\tilde{k}_0}\right)}
		{(\zeta-B)(\zeta+B)}
		(1+r_1(\zeta)r_2(\zeta))
		\right)}
	{\zeta-k}\,d\zeta\right),\quad k\in\C^-.
\end{equation}
Combining \eqref{delpr}, \eqref{a-2exp} and \eqref{r-a}, we arrive at \eqref{a_2(B)}.
Finally, equations \eqref{qas}, \eqref{Aj} and \eqref{a_2(B)} yield \eqref{ras1}.
\end{proof}
\begin{remark}
The asymptotics of the solution $q(x,t)$ in the region $0\leq|\xi|<-B$ can be found also from \eqref{sol1} in terms of $A_2(x,t)$ as follows (cf.\,\,\eqref{qas})
\begin{equation}\label{qas1}
	q(x,t)=-2\I\lim_{k\to\infty}
	k\overline{\hat{M}_{2,1}^{\mathrm{as}}}(-x,t,k)
	+\so(1)
	=-2\I\overline{A_2}(-x,t)+\so(1).
\end{equation}
Let us show that \eqref{qas} and \eqref{qas1} are consistent.
Observe that the equality $A_3(x,t)=-\overline{A_2}(-x,t)$ is equivalent to the following equation
(see \eqref{Aj} and \eqref{cj})
\begin{equation*}
	4B^2(\overline{c_1}(-x,t)+c_2(x,t))
	+\overline{c_1}(-x,t)c_2(x,t)
	(c_1(x,t)+\overline{c_2}(-x,t))=0.
\end{equation*}
Equality
$c_1(x,t)+\overline{c_2}(-x,t)=0$ follows from \eqref{a_2(B)} and thus we have that $A_3(x,t)=\overline{A_2}(-x,t)$ for all $\xi$ such that $0\leq|\xi|<-B$.
\end{remark}
\begin{remark}[Independence of \eqref{ras1} of $\tilde{k}_0$]
In the definition of $\hat\delta(k,\xi)$ we have used an auxiliary point $\tilde{k}_0\in\C^+$, see \eqref{hat-del}.
Taking into account that $\hat\delta$ satisfies the same RH problem \eqref{hat-d-j}--\eqref{hat-d-n} for any $\tilde{k}_0\in\C^+$, we conclude that it does not depend on the choice of $\tilde{k}_0$.
\end{remark}

\subsubsection{Region $|\xi|>B$, $B>0$}
\label{S2R1}
In this region the asymptotic analysis of the basic RH problem follows the similar lines as in the case $B<0$, see Section \ref{R1}.
Thus, we obtain the same asymptotic behavior of $q(x,t)$ as described in Proposition \ref{RA1}, but with $B$ instead of $-B$:

\subsubsection{Region $0\leq|\xi|<B$, $B>0$}
\label{S2R2}
As in Section \ref{R2}, the jump matrix of the auxiliary scalar RH problem for $\delta$ has a simple zero on the contour at $k=-B$, see \eqref{delRH} and \eqref{r-a}.
Substituting $-B$ instead of $B$ in \eqref{del1RH}--\eqref{hat-del-s}, we obtain the auxiliary function $\hat\delta(k,\xi;-B)$ defined in \eqref{hat-del} (see also \eqref{hat-del-s}), which solves the following scalar RH problem (cf.\,\,Section \ref{R2}):
\begin{description}
\item [RH problem for 
$\hat{\delta}(k,\xi;-B)$]
\indent
\begin{enumerate}
	\item jump condition:
	\begin{equation}\label{hat-d-j-B}
		\hat{\delta}_+(k,\xi;-B)
		=\hat{\delta}_-(k,\xi;-B)
		(1+r_1(k)r_2(k)),\quad
		k\in(-\infty,-\xi)
		\setminus\{-B\};
	\end{equation}
	\item normalization condition:
	\begin{equation}\label{hat-d-n-B}
		\hat\delta(k,\xi;-B)
		=1+\mathcal{O}(1),
		\quad k\to\infty;
	\end{equation}
	\item behavior at $k=-B$:
	\begin{equation*}
		\begin{split}
			&\hat\delta(k,\xi;-B)=
			(k+B)\frac{\delta_{1+}(-B,\xi;-B)
				\delta_2(-B,\xi;-B)}
			{\tilde{k}_0-B}
			+\mathcal{O}\left((k+B)^2\right),
			\quad k\to -B+\I0,\\
			&\hat\delta(k,\xi;-B)=
			\delta_{1-}(-B,\xi;-B)
			\delta_2(-B,\xi;-B)
			+\mathcal{O}(k+B),
			\quad k\to -B-\I0;
		\end{split}
	\end{equation*}
	\item singularity at $k=-\xi$ is described in \eqref{hat-del-s} with $-B$ instead of $B$.
\end{enumerate}
\end{description}
\begin{figure}
\begin{minipage}[h]{0.99\linewidth}
	\centering{\includegraphics[width=0.7\linewidth]{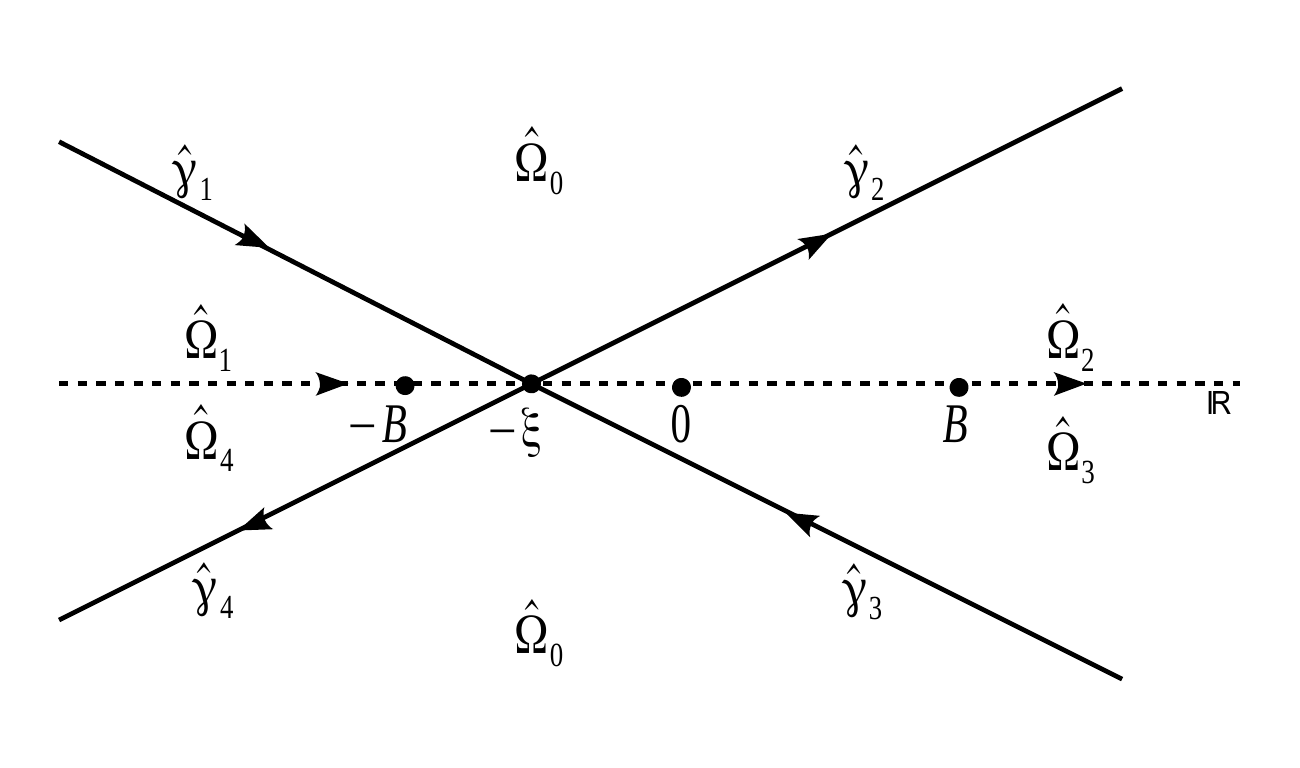}}
	\caption{Contour $\hat\Gamma
		=\hat\gamma_1\cup\dots
		\cup\hat\gamma_4$
		and domains $\hat\Omega_j$,
		$j=0,\dots,4$ in Section \ref{S2R2}.}
\label{F-3}
\end{minipage}
\end{figure}
Making the same transformations $M\rightsquigarrow\tilde{M}
\rightsquigarrow\hat{M}$
as in
Section \ref{R1}, but with 
$\hat{\delta}(k,\xi;-B)$ instead of $\delta(k,\xi)$ and with
domains $\hat\Omega_j$, $j=0,\dots,4$,
and contour $\hat\Gamma$ given in Figure \ref{F-3}, we conclude that $\hat{M}(x,t,k)$ is bounded at both points $k=\pm B$ and it solves the following RH problem:
\begin{description}
\item [RH problem for $\hat{M}(x,t,k)$, $0\leq|\xi|<B$]
\indent
\begin{enumerate}
\item jump condition \eqref{hat-M-j}--\eqref{hat-J} with $\hat\delta(k,\xi;-B)$ (see \eqref{hat-del})
instead of $\delta(k,\xi)$ on the cross $\hat\Gamma$ given in Figure \ref{F-3};

\item normalization condition at infinity:
\begin{equation*}
	\hat{M}(x,t,k)=I
	+\mathcal{O}\left(k^{-1}\right)
	,\quad k\to\infty;
\end{equation*}

\item singularity at $k=-\xi$ (see \eqref{nu}):
\begin{equation*}
	\begin{split}
		&\hat{M}(x,t,k)
		\begin{pmatrix}
			(k+\xi)^{-\Imm\,\nu(-\xi)}& 0\\
			0& (k+\xi)^{\Imm\,\nu(-\xi)}
		\end{pmatrix}
		=\mathcal{O}(1),\quad
		k\to -\xi.
	\end{split}
\end{equation*}
\end{enumerate}
\end{description}

Thus, the solution $q(x,t)$ decays in the region $0\leq|\xi|<B$.

Now we are at the position to formulate the main result of Section \ref{S1}.
Applying the Deift-Zhou nonlinear steepest decent method \cite{DZ93} to the RH problem for $\hat{M}$, we
obtain the following long-time asymptotic behavior of $q(x,t)$
along the rays $\xi\in\R\setminus\{B,-B\}$:

\begin{theorem}[Case II, $n=0$]
\label{ThCII0}
Consider the Cauchy problem \eqref{bc2}-\eqref{ic} with $B\neq0$, where the
initial datum $q_0(x)$ satisfies the following properties:
\begin{enumerate}[i)]
\item $q_0(x)$ a compact perturbation of pure step function \eqref{shst};

\item spectral data $a_1(k)$, $a_2(k)$ and $b(k)$, associated to $q_0(x)$, satisfy Assumptions A;

\item $q_0(x)$ is such that $a_1(k)$ satisfies properties described in Case II with $n=0$.
\end{enumerate}
Then the main long-time asymptotic terms of the solution $q(x,t)$ along the rays $\xi=\frac{x}{4t}=const$ read as follows
(see Figures \ref{fas1} and \ref{fas2} for $n=0$):
\begin{enumerate}[1)]
\item if $\xi<-|B|$, then for any $B\neq0$ the asymptotics is described by the Zakharov-Manakov type formula:
\begin{equation*}
	q(x,t)=t^{-\frac{1}{2}-\Imm\,\nu(\xi)}\alpha_1(\xi)
	\exp\left\{4\I t\xi^2-\I\Ree\,\nu(\xi)\ln t\right\}
	+R_1(\xi,t);
\end{equation*}
\item if $\xi>|B|$, then for any $B\neq0$ the asymptotics is described by the plane wave with modulated amplitude:
\begin{equation*}
	q(x,t)=A\delta^2(-B,\xi)e^{2\I Bx-4\I B^2t}
	+\mathcal{O}\left(t^{-\frac{1}{2}+|\Imm\,\nu(-\xi)|}\right);
\end{equation*}
\item if $0\leq|\xi|<|B|$, then
\begin{enumerate}[{3.}1)]
	\item for $B>0$ the asymptotics is described by the Zakharov-Manakov type formula:
	\begin{equation*}
		q(x,t)=t^{-\frac{1}{2}+\Imm\,\nu(-\xi)}
		\alpha_2(\xi)
		\exp\left\{4\I t\xi^2-\I\Ree\,\nu(-\xi)
		\log t\right\}+R_2(\xi,t);
	\end{equation*}
	\item for $B<0$ the asymptotics has the form of the periodic oscillations:
	\begin{equation*}
		q(x,t)=
		\frac{16AB^2\hat{\delta}^2(-B,\xi;B)
			e^{2\I Bx-4\I B^2t}}
		{16B^2-A^2\left(\overline{\hat{\delta}}\right)^2
			(-B,-\xi;B)
			\hat{\delta}^2(-B,\xi;B)e^{4\I Bx}}
		+\mathcal{O}\left(t^{-\frac{1}{2}+|\Imm\,\nu(-\xi)|}\right).
	\end{equation*}
\end{enumerate}
\end{enumerate}
Here $\nu(-\xi)$, $\delta(-B,\xi)$ and 
$\hat{\delta}(-B,\xi;B)$ are given in \eqref{nu} \eqref{ddef} and \eqref{hat-del}, respectively.
The asymptotic formula in item 3.2) holds uniformly in $x,t$ away from  arbitrarily small neighborhoods of the possible zeros of the denominator.
Notice that assumption iii) yields $-\frac{1}{2}<\Imm\,\nu(\xi)<\frac{1}{2}$, $\xi<-|B|$.
Constants $\alpha_j(\xi)$ and remainders $R_j(\xi,t)$, $j=1,2$, are as follows:
\begin{equation*}
\begin{split}
	&\alpha_1(\xi)=\dfrac{\sqrt{\pi}\,
		\exp\left\{-\frac{\pi}{2}\overline{\nu}(\xi)+\frac{\pi\I}{4}-2\overline{\chi}(\xi,-\xi)
		-3\I\overline{\nu}(\xi)\log2\right\}}
	{\overline{r_2(\xi)}\Gamma(-\I\overline{\nu}(\xi))},\\
	&\alpha_2(\xi)=\dfrac{\sqrt{\pi}
		\exp\left\{-\frac{\pi}{2}\nu(-\xi)
		+\frac{\pi\I}{4}+2(\chi_1+\chi_2)(-\xi,\xi;-B)-3\I\nu(-\xi)
		\log 2\right\}}
	{r_1(-\xi)\Gamma(-\I\nu(-\xi))},		
\end{split}
\end{equation*}
and
\begin{equation*}
R_1(\xi,t)=
\begin{cases}
	\mathcal{O}\left(t^{-1}\right),& \Imm\,\nu(\xi)>0,\\
	\mathcal{O}\left(t^{-1}\log t\right),&\Imm\,\nu(\xi)=0,\\
	\mathcal{O}\left(t^{-1+2|\Imm\,\nu(\xi)|}\right),&\Imm\,\nu(\xi)<0,
\end{cases}
\end{equation*}
\begin{equation*}
R_2(\xi,t)=
\begin{cases}
	\mathcal{O}\left(t^{-1+2|\Imm\,\nu(-\xi)|}\right),& 
	\Imm\,\nu(-\xi)>0,\\
	\mathcal{O}\left(t^{-1}\log t\right),&\Imm\,\nu(-\xi)=0,\\
	\mathcal{O}\left(t^{-1}\right),&\Imm\,\nu(-\xi)<0,
\end{cases}
\end{equation*}
with $\chi(\xi,-\xi)$ and $\chi_j(-\xi,\xi;-B)$, $j=1,2$ given in \eqref{chi} and \eqref{chi1,2} respectively.
\end{theorem}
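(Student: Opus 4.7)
The proof proposal follows the Deift--Zhou nonlinear steepest descent method \cite{DZ93} applied to the RH problem for $\hat M$ constructed in Sections~\ref{R1}--\ref{S2R2}. The plan is to work ray by ray, fixing $\xi$ and letting $t\to\infty$, and to extract the leading asymptotics from the local behavior of $\hat M$ near the unique stationary phase point $k_0=-\xi$ of $\theta(k,\xi)=4k\xi+2k^2$.

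The first step is to observe that, in each of the four regions of item~(1)--(3.2), the jump matrix $\hat J(x,t,k)$ on the cross $\hat\Gamma$ has diagonal entries $1$ and off-diagonal entries proportional to $e^{\pm 2\I t\theta(k,\xi)}$, so by the signature table (Figure~\ref{stab}) one has $\hat J-I=\mathcal{O}(e^{-c t|k+\xi|^2})$ uniformly on $\hat\Gamma\setminus\{-\xi\}$ for some $c>0$. Hence the contribution of the off-diagonal jumps away from $k_0=-\xi$ is negligible, and the long-time behavior is governed by (i) the residue/singularity data of $\hat M$ at $k=\pm B$, producing what we called the \emph{model} (or rough) asymptotics, and (ii) the local behavior near $k_0=-\xi$, producing the Zakharov--Manakov $t^{-1/2}$ correction.

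Next, I would introduce a model problem $\hat M^{\mathrm{as}}$ retaining only the normalization and those residue/singularity conditions at $\pm B$ that are present in the given sector: no residues for $\xi<-|B|$; residue at $-B$ only for $\xi>|B|$; residues at both $\pm B$ in the middle region for $B<0$; no residues for $0\le|\xi|<B$ with $B>0$. In each case $\hat M^{\mathrm{as}}$ is solved explicitly as in \eqref{RHMas}--\eqref{Aj}; the identity \eqref{a_2(B)} provides the crucial link $c_1\,c_2=-\,|c_2|^2$ and yields the middle-sector formula \eqref{ras1}. For a small disk $D(-\xi,\varepsilon)$ centered at the stationary point, I would construct a local parametrix $\hat M^{\mathrm{loc}}$ by solving a model RH problem on the cross $\hat\Gamma\cap D(-\xi,\varepsilon)$ with constant reflection coefficients frozen at $k=-\xi$ (using the factorization \eqref{dsing}, \eqref{hat-del-s} to absorb the singularity $(k+\xi)^{\I\nu(-\xi)}$ into the standard data). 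This local model admits an explicit solution in terms of the parabolic cylinder function $D_{\I\nu}$; expanding its solution at $k=\infty$ inside the local variable $z=2\sqrt{t}(k+\xi)$ produces the coefficients $\alpha_1(\xi)$ and $\alpha_2(\xi)$ in the theorem, exactly as in \cite{RS21-DE, RS21-SIAM}.

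The final step is the standard small-norm argument: setting $\hat R = \hat M (\hat M^{\mathrm{as}})^{-1}$ outside $D(-\xi,\varepsilon)$ and $\hat R = \hat M (\hat M^{\mathrm{loc}})^{-1}$ inside, one checks that the jump of $\hat R$ on the matching circle $\partial D(-\xi,\varepsilon)$ is $I+\mathcal{O}(t^{-1/2+|\Imm\,\nu(-\xi)|})$ and on the remaining cross is exponentially small, so that $\hat R = I+\mathcal{O}(t^{-1/2+|\Imm\,\nu|})$ uniformly. Substituting into \eqref{sol} and reading off the coefficient of $k^{-1}$ at infinity gives the plane wave / periodic background plus the parabolic-cylinder $t^{-1/2}$ correction; the precise form of the remainder terms $R_j(\xi,t)$ follows from the next-order expansion of the parabolic cylinder parametrix, splitting into the three cases according to $\sgn{\Imm\,\nu}$ as in \cite{RS21-DE}. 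The main technical obstacle is the coexistence of the stationary point $k_0=-\xi$ with the singularities at $\pm B$ of the spectral function: uniformity of the small-norm estimates (and hence of the remainders) requires $|\xi|$ to stay a fixed distance away from $|B|$, which is why the theorem is stated along rays $\xi\ne\pm|B|$, and in case~3.2 additionally away from the zeros of the denominator in \eqref{ras1}, where the model solution $\hat M^{\mathrm{as}}$ itself develops poles.
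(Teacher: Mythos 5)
Your overall strategy coincides with the paper's: the theorem is proved by the Deift--Zhou method applied to the RH problem for $\hat M$, with the sector-dependent model problem coming from the residue/singularity structure at $k=\pm B$ and the $t^{-1/2}$ correction coming from a parabolic-cylinder parametrix at the stationary point $k=-\xi$, the technical estimates being imported from \cite{RS19,RS21-DE} exactly as in the paper. However, there is one genuine gap: your treatment of the sector $\xi<-|B|$. The constructions of Sections \ref{R1}--\ref{S2R2} never set up $\hat M$ for this sector; the paper deliberately avoids it by the symmetry \eqref{sol1}, recovering $q(x,t)$ on the ray $\xi<-|B|$ from the $(2,1)$-entry of the solution on the reflected ray $-\xi>|B|$ (this is also why $\alpha_1(\xi)$ involves $\overline{r_2(\xi)}$, $\overline{\nu}(\xi)$, $\overline{\chi}(\xi,-\xi)$ in conjugated form). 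If instead you analyze the ray $\xi<-|B|$ directly, as you propose, then both singular points $\pm B$ lie in $(-\infty,-\xi)$, where the scalar function $\delta$ must jump by $1+r_1r_2=1/(a_1a_2)$, which vanishes at both $k=\pm B$ by \eqref{r-a} and \eqref{a1pmB}; the formula \eqref{ddef} is then not usable and one needs a doubly regularized $\hat\delta$ (the analogue of Section \ref{R2} at two points), after which residue conditions at both $\pm B$ reappear and one must check that the resulting model still produces pure decay and the stated (conjugated) coefficient $\alpha_1$. Your assertion ``no residues for $\xi<-|B|$'' skips precisely this step, so as written the plan for item 1) does not go through without either the symmetry argument or the extra regularization.

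Two smaller points. First, the parenthetical ``$c_1c_2=-|c_2|^2$'' is not what \eqref{a_2(B)} gives: the correct identity is $c_1(x,t)=-\overline{c_2}(-x,t)$, so that $c_1c_2=-\tfrac{A^2}{4}\bigl(\overline{\hat\delta}\bigr)^2(-B,-\xi;B)\,\hat\delta^2(-B,\xi;B)\,e^{4\I Bx}$, which is complex and carries the oscillatory factor $e^{4\I Bx}$ appearing in the denominator of \eqref{ras1}; taking it to be $-|c_2|^2$ would produce a different (incorrect) middle-sector formula. Second, the uniform bound $\hat J-I=\mathcal{O}(e^{-ct|k+\xi|^2})$ on the cross requires the analytic continuations of $r_1\delta^{-2}$, $r_2\delta^2$ into the lens domains to stay bounded there; this holds for the compactly perturbed step data and Case II ($a_1$, $a_2$ zero-free off $\pm B$), but it should be stated, since the poles of $r_1,r_2$ at $\pm B$ sit inside the lens regions in the regimes of Figures \ref{F-2} and \ref{F-3}.
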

\begin{proof}
Here the implementation of the Deift-Zhou nonlinear steepest decent method is based upon the approximation, as 
$t\to\infty$, of the RH problem for $\hat M$ 
in the vicinity of the stationary phase point $k=-\xi$, see Figures \ref{F-1}--\ref{F-3}.
To this end we introduce the so-called local parametrix, which is explicitly defined, as for the NLS equation, in terms of the parabolic cylinder functions \cite{I81}.
This parametrix provides good estimates for the RH problem for $\hat M$ as $t\to\infty$ and, consequently, for the solution $q(x,t)$ retrieved from $\hat M$ as follows (see \eqref{sol}--\eqref{sol1}):
\begin{equation*}
\begin{split}
	&q(x,t)=2\I\lim_{k\to\infty}
	k\hat{M}_{1,2}(x,t,k),\\
	&q(-x,t)=-2\I\lim_{k\to\infty}
	k\overline{\hat{M}_{2,1}}(x,t,k).
\end{split}
\end{equation*}

The details and peculiarities of the construction and asymptotic estimates of parametrix for the nonlocal NLS equation are presented in \cite{RS19, RS21-DE}.
Adhering to the analysis in \cite{RS19, RS21-DE}, we arrive at the long-time asymptotic expansion of $q(x,t)$ given in the Theorem.
\end{proof}

\subsection{Asymptotics in Case I with $n=0$}
\label{S2}
The difference between Case I, $n=0$, and Case II, $n=0$ consists in (i) the additional point of discrete spectrum at 
$k=\I k_0$, $k_0>0$ and (ii) the winding of the argument at $k=0$, see \eqref{aa1g}.
Since the corresponding residue condition, see \eqref{resinI}, vanishes exponentially fast as $|x|,t\to\infty$ along the rays 
$\xi\neq0$, we obtain the same problem for $\hat{M}$ in the regions $|\xi|>|B|$ and $0<|\xi|<|B|$ as in Sections \ref{R1}, \ref{S2R1} and \ref{R2}, \ref{S2R2}, respectively, for the large $x,t$.
Thus, the asymptotic behavior of $q(x,t)$ in Case I with $n=0$ is the same as in Theorem \ref{ThCII0} for all $\xi\neq0$, and we have the following theorem.
\begin{theorem}[Case I, $n=0$]
\label{ThCI0}
Consider the Cauchy problem \eqref{bc2}-\eqref{ic} with $B\neq0$, where the
initial datum $q_0(x)$ satisfies the following properties:
\begin{enumerate}[i)]
\item $q_0(x)$ a compact perturbation of pure step function \eqref{shst};

\item spectral data $a_1(k)$, $a_2(k)$ and $b(k)$, associated to $q_0(x)$, satisfy Assumptions A;

\item $q_0(x)$ is such that $a_1(k)$ satisfies properties described in Case I with $n=0$.
\end{enumerate}
Then the main long-time asymptotic terms of the solution $q(x,t)$ along the rays $\xi=\frac{x}{4t}=const$ in the regions 
$\xi<-|B|$, $\xi>|B|$ and $0<\xi<|B|$ are given in items 1), 2) and 3) of Theorem \ref{ThCII0}.
\end{theorem}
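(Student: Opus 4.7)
The plan is to adapt the Deift-Zhou analysis of Theorem \ref{ThCII0} to the two novelties of Case I with $n=0$: the simple zero of $a_1(k)$ at $k=\I k_0\in\C^+$ (see I.1)) and the extra $\pi$ in the winding of $\arg(a_1 a_2)$ at $k=0$ (equation \eqref{aa1g} instead of \eqref{aa1g1}). I will argue that neither feature alters the leading-order behavior along the rays $\xi<-|B|$, $\xi>|B|$, or $0<\xi<|B|$, so the asymptotic formulas of Theorem \ref{ThCII0} transfer to Case I.

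First, I would repeat verbatim the construction of the scalar factor $\delta(k,\xi)$ for $|\xi|>|B|$ and $\hat\delta(k,\xi;\pm B)$ for $0<\xi<|B|$, together with the transformations $M\rightsquigarrow\tilde M\rightsquigarrow\hat M$ of Subsections \ref{R1}--\ref{S2R2}. The Cauchy integrals in \eqref{ddef}, \eqref{d1def}, and \eqref{d2def} are taken over $(-\infty,-\xi)$ with $\xi\ne 0$; in each of the three regions this path stays in $(-\infty,0)$ and the winding of $\arg(a_1 a_2)$ on it agrees between Cases I and II (both satisfy \eqref{gRn}--\eqref{gRn-1}), while the discrepancy \eqref{aa1g} versus \eqref{aa1g1} is confined to the integration-free interval $(-|B|,0)$ (or to $k>0$, never traversed) and plays no role. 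Consequently $\nu(-\xi)$ from \eqref{nu} and the local behaviour of $\delta$ and $\hat\delta$ coincide with those of Case II, and the resulting RH problem for $\hat M$ inherits the same jumps, normalization and singular data at $k=-\xi$ and at $k=\pm B$ as in Section \ref{S1}, together with the additional residue condition
\begin{equation*}
\underset{k=\I k_0}{\operatorname{Res}}\hat M^{(1)}(x,t,k)=\frac{\gamma_0}{\dot a_1(\I k_0)}\,\Delta^{-2}(\I k_0,\xi)\,e^{-2k_0 x-4\I k_0^2 t}\,\hat M^{(2)}(x,t,\I k_0),
\end{equation*}
inherited from \eqref{resinI}, where $\Delta$ stands for $\delta$ or $\hat\delta$ according to the region.

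Next, I would show that this extra residue contributes only to the exponentially small error. In the regions $\xi>|B|$ and $0<\xi<|B|$ the modulus of the coefficient behaves like $e^{-2k_0 x}=e^{-8k_0\xi t}$, which decays as $t\to\infty$; a local rational conjugation then replaces the pole by a jump on a small circle around $\I k_0$ that is $I+\mathcal{O}(e^{-ct})$ for some $c>0$, and the surviving RH problem is exactly the one analyzed in Case II. In the region $\xi<-|B|$ the raw coefficient grows, so I would first perform a Blaschke-type transformation $\hat M\mapsto\hat M\,\bigl(\tfrac{k-\I k_0}{k+\I k_0}\bigr)^{\sigma_3}$, together with the compensating adjustment of the residue structure dictated by the nonlocal symmetry \eqref{Psymm}, to trade the pole at $\I k_0$ for one at $-\I k_0\in\C^-$ whose coefficient has modulus $e^{8k_0\xi t}\to 0$; the same reduction then applies. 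In either case, the Deift-Zhou small-norm argument and the parabolic-cylinder parametrix at the stationary phase point $k=-\xi$ from \cite{RS19,RS21-DE} furnish items 1)--3) of Theorem \ref{ThCII0} verbatim.

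The hard part will be the bookkeeping for the $\xi<-|B|$ case: the Blaschke swap must respect the nonlocal symmetry \eqref{Psymm}, preserve the exponent $\nu(\xi)$ and the local behaviour of $\delta_\pm$ at $k=-\xi$, and remain compatible with the singularities of $\hat M$ at $k=\pm B$ that feed into the Zakharov-Manakov prefactor $\alpha_1(\xi)$. Once this verification is carried out, the recovery of $q(x,t)$ via \eqref{sol}--\eqref{sol1} and the remainder estimates $R_1$, $R_2$ transfer unchanged from Case II, completing the proof.
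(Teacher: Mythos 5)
Your treatment of the rays $\xi>|B|$ and $0<\xi<|B|$ coincides with the paper's argument: for $\xi>0$ the residue coefficient in \eqref{resinI} has modulus $e^{-2k_0x}=e^{-8k_0\xi t}\to0$, so the pole at $\I k_0$ only perturbs the Case II problem for $\hat M$ by an exponentially small amount, and the discrepancy \eqref{aa1g} vs.\ \eqref{aa1g1} concerns only the winding at $k=0$, which never enters $\nu(-\xi)$ for $\xi\neq0$. The paper's proof is exactly this observation — and nothing more — because, owing to the two reconstruction formulas \eqref{sol} and \eqref{sol1}, the RH problem is only ever analyzed for $x>0$: the asymptotics on the rays $\xi<-|B|$ (and $-|B|<\xi<0$) is read off from $\overline{\hat M_{2,1}}$ evaluated at $x'=-x>0$, where the residue coefficient $e^{-2k_0x'}$ is again exponentially small. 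This is precisely why item 1) of Theorem \ref{ThCII0} is phrased in terms of $\overline{r_2(\xi)}$, $\overline{\nu}(\xi)$, $\overline{\chi}$: it was derived in Section \ref{R1} from the problem at positive $x$.

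Your alternative route for $\xi<-|B|$ — analyzing the RH problem directly at $x<0$ and trading the pole at $\I k_0$ for one at $-\I k_0$ via $\hat M\mapsto\hat M\bigl(\frac{k-\I k_0}{k+\I k_0}\bigr)^{\sigma_3}$ — contains a genuine gap as written. First, your claim that the Cauchy integrals in \eqref{ddef}, \eqref{d1def}, \eqref{d2def} run over a contour contained in $(-\infty,0)$ fails in this region: for $\xi<-|B|$ one has $-\xi>|B|>0$, so $(-\infty,-\xi)$ crosses $k=0$ and contains \emph{both} points $\pm B$, at which $1+r_1(k)r_2(k)=\frac{1}{a_1(k)a_2(k)}$ vanishes (since $a_1^{\pm B}\neq0$). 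Hence $\delta$ cannot be taken ``verbatim'' from \eqref{ddef}; one would need a two-point regularization of the type \eqref{del1RH}--\eqref{hat-del}, and Assumptions A control the winding of $\arg(a_1a_2)$ only on $(-\infty,0)$, so the behavior on $(0,-\xi)$ would have to be extracted separately (e.g.\ from the symmetry $\overline{a_j}(-\bar k)=a_j(k)$) — none of which your proposal addresses. Second, even granting the Blaschke swap (which is the standard device and does produce a reciprocal, hence decaying, coefficient at $-\I k_0$), the conjugation multiplies the effective reflection coefficients on $\R$ by the unimodular factors $\bigl(\frac{k-\I k_0}{k+\I k_0}\bigr)^{\mp2}$, so the parabolic-cylinder parametrix at $k=-\xi$ yields a prefactor a priori differing from $\alpha_1(\xi)$ by $\bigl(\frac{-\xi-\I k_0}{-\xi+\I k_0}\bigr)^{\pm2}$; showing that this cancels (as it must, since the answer is fixed by the $x>0$ analysis) is precisely the bookkeeping you defer, so the argument is incomplete where it departs from the paper. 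The simple repair is to do what the paper does: invoke \eqref{sol1} and never pose the asymptotic problem at $x<0$, after which no pole swap and no new $\delta$ construction are needed.
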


\section{Concluding remarks}
\label{CR}
\begin{enumerate}[i)]
\item Transition regions. 
In both Cases I and II with $n=0$, the resulting asymptotic picture lacks the description of the behavior of $q(x,t)$ along the rays $\xi=\pm B$.
From the perspective of the RH approach, these transition regions correspond to merging of the stationary phase point $k=-\xi$ and the singularities at $k=\pm B$, see \eqref{a1pmB}.
On the other hand, taking the limits  $\xi\uparrow\pm B$ and 
$\xi\downarrow\pm B$ in the main asymptotic terms in the corresponding regions, one concludes that the solution exhibits increasing oscillations.
A partial description of the transition region of this type can be found in \cite{RS21-SIAM} in the case $B=0$.

In Case I we have, in addition, a transition  at 
$\xi=0$. 
It is evident from Theorem \ref{ThCI0} that the asymptotics in the regions $-|B|<\xi<0$ and $0<\xi<|B|$ matches as $\xi\to0$. 
On the other hand, we cannot directly  analyze the RH problem for $\hat{M}$ 
asymptotically at $\xi=0$, since for such value of 
$\xi$ we have $\Imm\,\nu(0)=-\frac{1}{2}$ (see \eqref{aa1g} and \eqref{nu}), where the asymptotic analysis of the problem on the cross $\hat\Gamma$ fails, see \cite{RS19}.
Therefore the description of the transition zone(s) at $\xi=0$ remains an open question.


\item Case I, II with arbitrary $n\in\N$.
In the region $|\xi|>|B|$, the major challenge is related to the winding of the argument of $1+r_1(k)r_2(k)$ at $k=\omega_j$, $j=1,\dots,n$, see items I.2), II.2) and \eqref{r-a}.
The ``intermittency'' property of $\omega_j$ and 
$\Ree\,p_j$, see \eqref{ordom}, is pivotal for resolving this problem, as discussed in \cite{RS21-CMP} (see also \cite{RS20-JMAG}).
Following, almost literally, the analysis of \cite{RS21-CMP}, it can be shown that the asymptotic regions $\xi<-|B|$ and $\xi>|B|$ split into $2n+1$ sectors having qualitatively different behavior.
Namely, for $\xi<-|B|$ we have $n+1$ decaying and $n$ modulated plane wave regions, while for $\xi>-|B|$ we have $n$ decaying and $n+1$ modulated plane wave regions, altering each other (see Figures \ref{fas1} and \ref{fas2} for the illustration).

Concerning the region $0<|\xi|<|B|$, the main difficulty consists in zero of $1+r_1(k)r_2(k)$ at $k=-|B|$, which is resolved in Section \ref{R2} (the singularity corresponding to the winding of the argument can be resolved in the same way as in \cite{RS21-CMP, RS20-JMAG}).

Since the asymptotic analysis for $n\in\N$ essentially repeats the methodology in Section \ref{S1} and \cite{RS21-CMP, RS20-JMAG}, we omit details here, and give the rough asymptotic formulas for the solution in the following proposition.

\begin{proposition}[Rough asymptotics in Cases I,II, $n\in\N$]
\label{pr1}
Consider the Cauchy problem \eqref{bc2}-\eqref{ic} with $B\neq0$, where the
initial datum $q_0(x)$ satisfies the following properties:
\begin{enumerate}[i)]
	\item $q_0(x)$ is a compact perturbation of pure step function \eqref{shst};
	
	\item spectral data $a_1(k)$, $a_2(k)$ and $b(k)$, associated to $q_0(x)$, satisfy Assumptions A;
	
	\item $q_0(x)$ is such that $a_1(k)$ satisfies properties described in either Case I or Case II with some $n\in\N$.
\end{enumerate}
Then the long-time asymptotic behavior of the solution $q(x,t)$ along the rays $\xi=\frac{x}{4t}=const$ has the following form (see \eqref{ordom}; to simplify notations we set $\Ree\,p_0:=-|B|$):
\begin{enumerate}[1)]
	
	\item if $-\Ree\,p_{n-m}<\xi<-\omega_{n-m+1}$, $m=0,\dots,n$ then 
	for any $B\neq0$
	\begin{equation*}
		q(x,t)=A\xi^{2m}\delta^2(-B,\xi)
		\left(\prod\limits_{s=0}^{m-1}p_{n-s}^{-2}\right)
		e^{2\I Bx-4\I B^2t}
		+\mathcal{O}\left(t^{-\frac{1}{2}
			+|\Imm\,\nu(-\xi)-m|}\right);
	\end{equation*}
	
	\item if $\omega_{n-m+1}<\xi<\Ree\,p_{n-m}$, $m=0,\dots,n$, 
	then for any $B\neq0$
	\begin{equation*}
		q(x,t)=\mathcal{O}
		\left(t^{-\frac{1}{2}-\Imm\,\nu(\xi)+m}\right);
	\end{equation*}
	
	\item if $-\omega_{n-m}<\xi<-\Ree\,p_{n-m}$, $m=0,\dots,n-1$, 
	then for any $B\neq0$
	\begin{equation*}
		q(x,t)=\mathcal{O}
		\left(t^{-\frac{1}{2}+\Imm\,\nu(-\xi)-m}\right);
	\end{equation*}
	
	\item if $\Ree\,p_{n-m}<\xi<\omega_{n-m}$, $m=0,\dots,n-1$, then for any $B\neq0$
	\begin{equation*}
		q(x,t)=\frac{-4\left(
			\prod\limits_{s=0}^{m}p_{n-s}^{2}\right)
			e^{-2\I Bx-4\I B^2t}}
		{A\xi^{2m}\overline{\delta}^2(-B,-\xi)}
		+\mathcal{O}\left(t^{-\frac{1}{2}
			+|\Imm\,\nu(-\xi)-m|}\right);
	\end{equation*}
	
	\item if $0<|\xi|<|B|$ in Case I or $0\leq|\xi|<|B|$ in Case II, then
	\begin{enumerate}[{5.}1)]
		\item for $B>0$
		\begin{equation*}
			q(x,t)=\mathcal{O}\left(t^{-\frac{1}{2}
				+|\Imm\,\nu(-\xi)-n|}\right);
		\end{equation*}
		\item for $B<0$
		\begin{equation*}
			\begin{split}
				q(x,t)=&
				\frac{16AB^2(B-\xi)^{2n}
					\prod\limits_{s=0}^{n-1}(B+p_{n-s})^{-2}\cdot
					\hat{\delta}^2(-B,\xi;B)
					e^{2\I Bx-4\I B^2t}}
				{16B^2-A^2
					\left(\frac{B-\xi}{B+\xi}\right)^{2n}
					\left(\prod\limits_{s=0}^{n-1}
					\frac{B-p_{n-s}}{B+p_{n-s}}
					\right)^2
					\left(\overline{\hat{\delta}}\right)^2
					(-B,-\xi;B)
					\hat{\delta}^2(-B,\xi;B)e^{4\I Bx}}\\
				&+\mathcal{O}\left(t^{-\frac{1}{2}+|\Imm\,\nu(-\xi)-n|}\right).
			\end{split}
		\end{equation*}
	\end{enumerate}
\end{enumerate}
Here $\nu(-\xi)$, $\delta(-B,\xi)$ and 
$\hat{\delta}(-B,\xi;B)$ are given in \eqref{nu} \eqref{ddef} and \eqref{hat-del}, respectively.
The asymptotic formula in item 5.2) holds uniformly in $x,t$ away from  arbitrarily small neighborhoods of the possible zeros of the denominator.
Notice that assumption iii) yields that in all considered asymptotic regions one has $-\frac{1}{2}<\Imm\,\nu(\xi)-m<\frac{1}{2}$ for the corresponding value of $m=0,\dots,n$.
\end{proposition}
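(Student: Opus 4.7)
The plan is to reduce each of the asymptotic sectors to a small-norm Riemann--Hilbert problem of the type already analyzed in Section \ref{S1}, by using a sequence of conjugations that (i) remove the residue contributions coming from discrete spectrum and (ii) turn the diagonal factor $1+r_1(k)r_2(k)$ in \eqref{tr1} into something with bounded argument on the relevant sub-interval of $(-\infty,-\xi)$. This is exactly the philosophy developed in \cite{RS21-CMP, RS20-JMAG} for $B=0$, and the intermittency property \eqref{ordom} is what makes it work.

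First I would fix a sector from items 1)--4) and, say, consider $-\Ree\,p_{n-m}<\xi<-\omega_{n-m+1}$. In this region the stationary phase point $-\xi$ lies between $\Ree\,p_{n-m}$ and $\omega_{n-m}$, and by \eqref{gRn}--\eqref{gRn-1} the argument of $(a_1 a_2)^{-1}$ picks up a winding of $2\pi m$ on $(-\infty,-\xi)$. I would replace the scalar function $\delta$ of \eqref{ddef} by a modified
\[
\delta^{(m)}(k,\xi)=\prod_{s=0}^{m-1}\frac{k-p_{n-s}}{k+\overline{p_{n-s}}}\cdot \delta(k,\xi),
\]
solving the same jump problem \eqref{delRH} on $(-\infty,-\xi)$ but now with argument stabilized (the Blaschke-type factors absorb the $2\pi m$ winding), and, simultaneously, acting as a Blaschke product that eliminates the residue conditions \eqref{resinI}--\eqref{resinII} at the pairs $(p_{n-s},-\overline{p_{n-s}})$ lying to the right of $-\xi$. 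The residues to the left of $-\xi$ decay exponentially because $e^{2it\theta(p_j,\xi)}$ is exponentially small there, so they pose no problem. The transformation $M\rightsquigarrow \tilde M=M(\delta^{(m)})^{-\sigma_3}$ followed by the usual lens opening \eqref{hat-M} produces a $\hat M$ whose jump decays to $I$ on the cross away from $k=-\xi$, and which is bounded at $k=\pm B$ in sectors 1)--4) because $-\xi$ separates $B$ and $-B$.

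For the inner sector 5), I would retain the construction of $\hat\delta(k,\xi;\pm B)$ from Subsections \ref{R2} and \ref{S2R2} (which handles the zero of $1+r_1 r_2$ at $k=\mp B$), and compose it with the same Blaschke product $\prod_{s=0}^{n-1}(k-p_{n-s})/(k+\overline{p_{n-s}})$ to absorb the winding of $2\pi n$ accumulated on all of $(-\infty,-\xi)$ together with all residues at the poles $p_j,-\overline{p_j}$. After these transformations, $\hat M$ is bounded everywhere except at the singular points $k=\pm B$, where residue conditions of exactly the form \eqref{res--B} and \eqref{res-B} survive but now with an additional Blaschke-product factor evaluated at $\pm B$, which accounts for the factors $\prod_s(B\pm p_{n-s})^{\pm 2}$ and $(B\mp\xi)^{2n}$ appearing in item 5.2). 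The explicit $2\times 2$ rational solution of the limiting model \eqref{RHMas} with these modified residue constants then yields the quoted formula upon \eqref{sol}.

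The main obstacle, as in \cite{RS21-CMP}, is the bookkeeping around the points $\omega_j$ and $\Ree\,p_j$: one has to check that the Blaschke factor in $\delta^{(m)}$ really cancels the $2\pi$-jumps of $\arg(1+r_1 r_2)$ at each $\omega_{n-s+1}$ without introducing new singularities at the turning points, and that $\Imm\,\nu(-\xi)-m\in(-1/2,1/2)$ in each sub-sector --- this is precisely the content of the intermittency inequalities \eqref{ordom} combined with \eqref{gRn}. Once this is verified, the remaining Deift--Zhou analysis of $\hat M$ near $k=-\xi$ is identical to the one in Theorem \ref{ThCII0} (and \cite{RS19, RS21-DE}), the parabolic-cylinder parametrix giving the $\mathcal{O}(t^{-1/2+|\Imm\,\nu(-\xi)-m|})$ error in each sector. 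Recovering $q(x,t)$ from \eqref{sol} and unwinding the Blaschke factors at $k=\infty$ then produces the prefactors $\xi^{2m}\prod p_{n-s}^{-2}$ in item 1), $\xi^{-2m}\prod p_{n-s}^{2}$ in item 4), and their analogues inside the formula of item 5.2).
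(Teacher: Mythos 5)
Your overall plan (conjugate away the troublesome discrete spectrum together with the excess winding, then rerun the Section~\ref{S1} analysis with the parabolic-cylinder parametrix, using the intermittency \eqref{ordom}) is indeed the route the paper intends, since it defers the details to \cite{RS21-CMP, RS20-JMAG}. However, two of your concrete steps would fail. First, your decay classification of the residues is inverted: for $p_j=a+\I b$ with $b>0$ the coefficient in \eqref{resinII} has modulus $|e^{2\I p_jx+4\I p_j^2t}|=e^{-8tb(\xi+a)}$, so it decays exactly when $\Ree p_j>-\xi$ and grows exponentially when $\Ree p_j<-\xi$. Hence in the sector of item 1) the $m$ poles $p_n,\dots,p_{n-m+1}$ lying to the \emph{left} of $-\xi$ are precisely the dangerous ones that must be flipped; your sentence declaring them ``exponentially small'' (and your locating of $-\xi$, which in that sector lies in $(\omega_{n-m+1},\Ree p_{n-m})$) would make the small-norm argument collapse. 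Second, the factor $\prod_{s=0}^{m-1}\frac{k-p_{n-s}}{k+\overline{p_{n-s}}}$ does neither of the jobs you assign to it: (i) it is analytic and nonzero at $k=-\xi$, so multiplying $\delta$ by it does not change the local behavior $(k+\xi)^{\I\nu(-\xi)}$ with $\Imm\nu(-\xi)\in(m-\tfrac12,m+\tfrac12)$, and the obstruction that makes the cross analysis of \cite{RS19} inapplicable remains; the conjugation must have a zero/pole of integer order at the stationary point (factors of the type $\frac{k-p_{n-s}}{k+\xi}$), which is what produces the shift $\nu\mapsto\nu-m$ in the error exponents and the $\xi$-dependent prefactors such as $\xi^{2m}$ and $(B-\xi)^{2n}$ in the statement --- a conjugator independent of $\xi$ cannot generate them; (ii) its pole sits at $-\overline{p}_{n-s}$, itself a zero of $a_1$ where the residue condition is benign (decaying coefficient), and conjugating with a pole there inverts that coefficient and creates a new exponentially \emph{growing} residue condition in the second column.

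In addition, your mechanism for the leading terms is wrong. In sectors 1)--4) the stationary point does not ``separate $B$ and $-B$'' ($-\xi\notin(-|B|,|B|)$ there), and $\hat M$ is \emph{not} bounded at $k=-B$: as in Section~\ref{R1}, the residue condition \eqref{res--B} survives, and it is this pole (together with \eqref{res-B} at $k=B$ in the inner sector for $B<0$) that generates the plane-wave terms, with coefficient given by the full conjugating scalar evaluated at $\mp B$ --- not by ``unwinding the Blaschke factors at $k=\infty$'', where they tend to $1$ and contribute nothing. As written, your scheme would therefore give $q=\so(1)$ in the plane-wave sectors of items 1) and 4) and cannot reproduce the prefactors $\xi^{2m}\prod_{s=0}^{m-1}p_{n-s}^{-2}$, $(B-\xi)^{2n}\prod_{s=0}^{n-1}(B+p_{n-s})^{-2}$, $\bigl(\tfrac{B-\xi}{B+\xi}\bigr)^{2n}$. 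The repair is the one of \cite{RS21-CMP}: flip exactly the poles to the left of $-\xi$ using rational factors anchored at the stationary point (their number equals the winding index $m$ by \eqref{ordom}, \eqref{gRn}), keep the singularities at $k=\mp B$, and redo the explicit $2\times2$ model computation \eqref{RHMas}--\eqref{qas} with the correspondingly modified residue constants.
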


\begin{figure}[h]
\begin{minipage}[h]{0.48\linewidth}
	\centering{\includegraphics[width=0.99\linewidth]{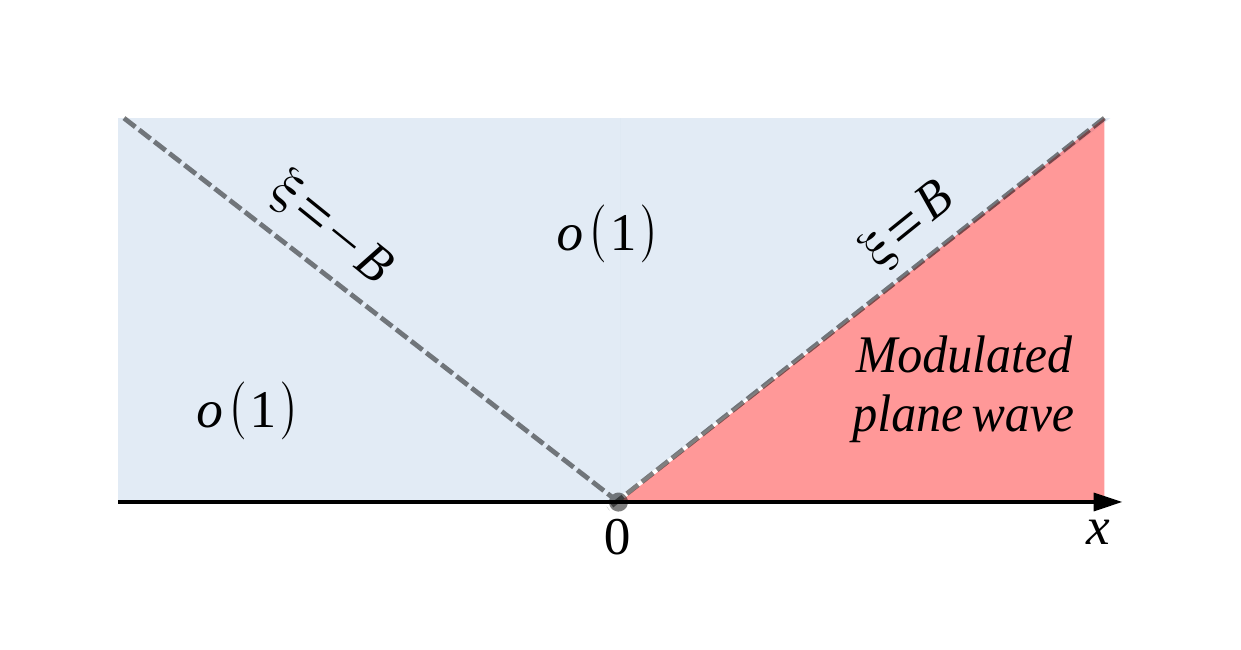}}
\end{minipage}
\hfill
\begin{minipage}[h]{0.48\linewidth}
	\centering{\includegraphics[width=0.99\linewidth]{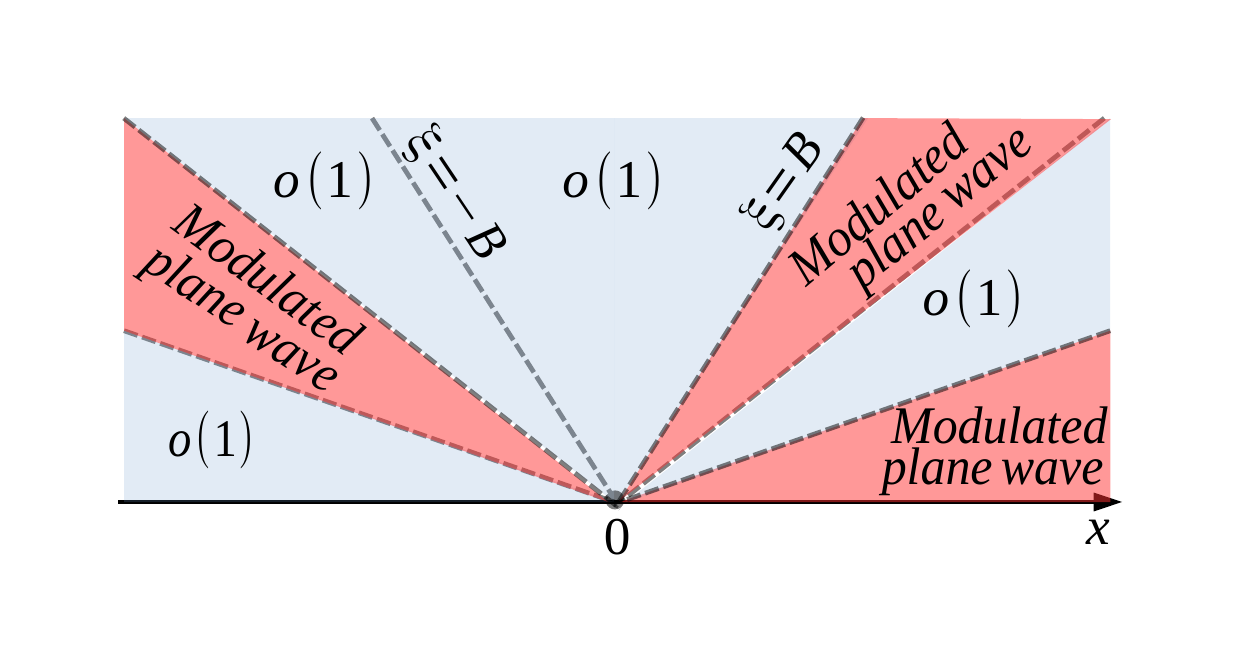}}
\end{minipage}
\caption{Asymptotics in Case II, $B>0$, for $n=0$ (left) and $n=1$ (right).}
\label{fas1}
\end{figure}

\begin{figure}[h]
\begin{minipage}[h]{0.48\linewidth}
	\centering{\includegraphics[width=0.99\linewidth]{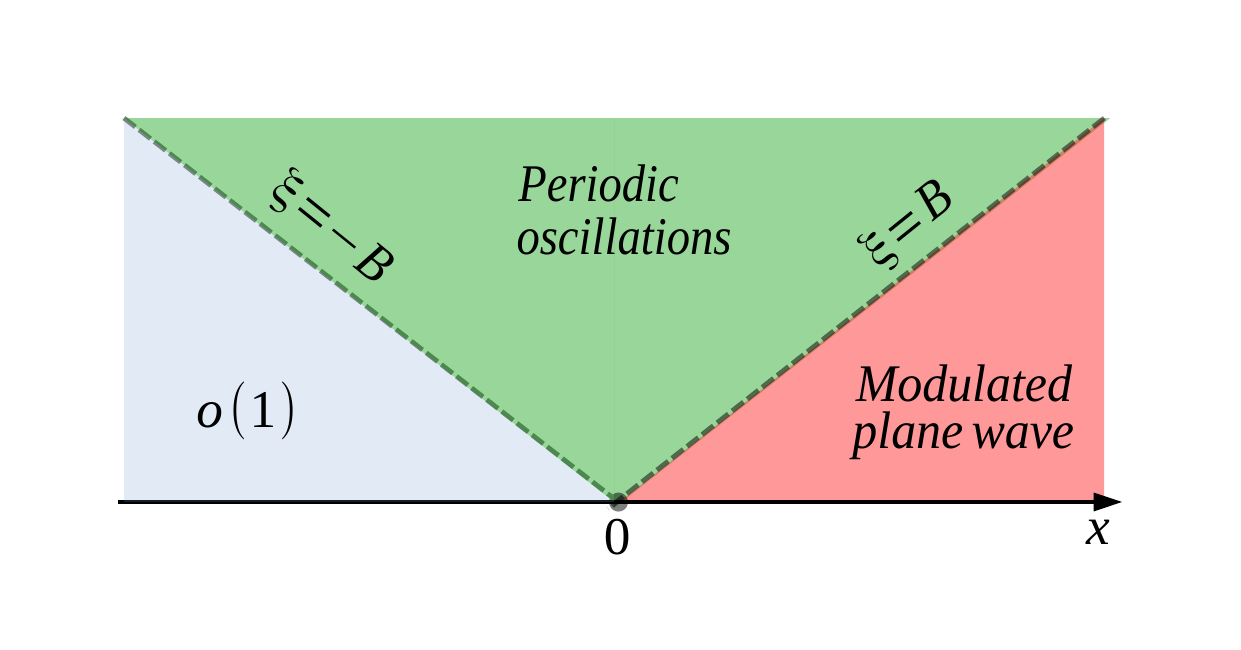}}
\end{minipage}
\hfill
\begin{minipage}[h]{0.48\linewidth}
	\centering{\includegraphics[width=0.99\linewidth]{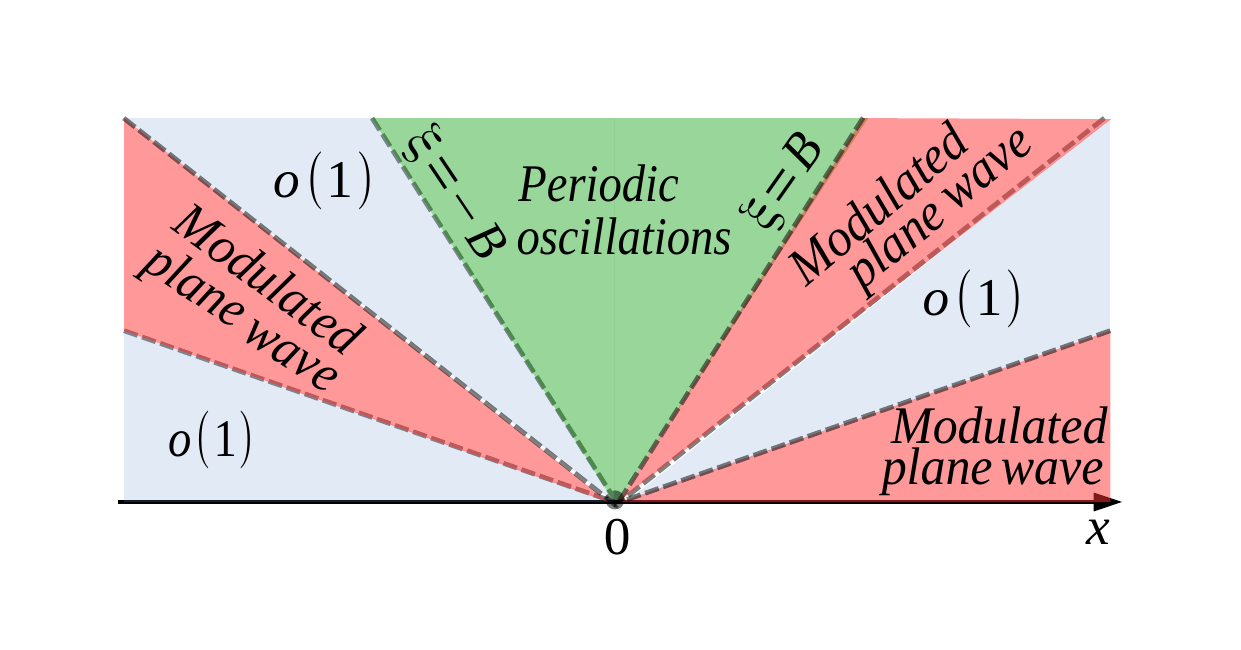}}
\end{minipage}
\caption{Asymptotics in Case II, $B<0$, for $n=0$ (left) and $n=1$ (right).}
\label{fas2}
\end{figure}

\item The Zakharov-Manakov type asymptotic formulas obtained in Theorems \ref{ThCII0} and \ref{ThCI0}  differ qualitatively from the classical one for the NLS equation \cite{ZM76}.
Namely, the power decay rate of the solutions depends, in general, on the direction $\xi$, along which we calculate the long-time asymptotic behavior of the solution.

\item The long-time asymptotic formula of $q(x,t)$ given in Theorem \ref{ThCII0}, items 1) and 3.2) and in Proposition \ref{pr1} can be extended by adding at least one precise decaying term, which is of the Zakharov-Manakov type, as detailed in 
\cite[Theorem 1]{RS21-DE, RS21-CMP}.

\end{enumerate}
\bigskip

\noindent {\bf Acknowledgements}. 
DS acknowledges the  support from the {QJMAM} Fund for Applied Mathematics.
The work of YR is supported by the European Union’s Horizon Europe research and innovation programme under the 
Marie Sk\l{}odowska-Curie grant agreement No 101058830.
S-FT acknowledges the support from the National Natural Science Foundation of China, Grant No. 12371255.
YR and DS thank the Armed Forces of Ukraine for providing security, which made this work possible. 

\appendix

\section{}\label{Ap1}
\begin{proof}[Proof of Proposition \ref{a1Zs}]
Introduce notation $k=k_1+\I k_2$, where $k_1\in\R$ and $k_2\geq0$.
Then equation $a_1(k)=0$ for 
$k\in\C^+\cup\R\setminus\{B,-B\}$
is equivalent to the following system of transcendental equations (see \eqref{spss}):
\begin{equation}\label{se}
\begin{split}
	&A^2e^{-4k_2R}\cos (4k_1R)=4B^2-4k_1^2+4k_2^2,\\
	&A^2e^{-4k_2R}\sin (4k_1R)=-8k_1k_2.
\end{split}
\end{equation}

To prove \textbf{item 1)}, we take
$k_2=0$ in \eqref{se} and obtain the equation
\begin{equation*}
A^2\cos (4k_1R)=4\left(B^2-k_1^2\right),\quad
\mbox{with }
4k_1R=\pi n,\,n\in\Z,
\end{equation*}
which directly yields items 1.1)--1.3).

\textbf{Item 2)} follows from \eqref{se} with $k_1=0$ and $k_2>0$.

Let us prove \textbf{item 3)}.
For $R=0$, \eqref{spss} implies that $a_1(k)$ can have real or purely imaginary zeros only and thus we have 3.1) for $R=0$ and any $B\in\R$.

Now consider $R>0$.

\textbf{Step 1}.
Observing that $(k_1,k_2)$ and $(-k_1,k_2)$ satisfies \eqref{se} simultaneously (cf.\,\,symmetries in Proposition \ref{prsp}, item (3)), it is enough to study \eqref{se} for $k_1,k_2>0$ only.
Introduce notations
\begin{equation*}
\tau=4k_1R,\quad y=4k_2R,\quad \tau,y>0.
\end{equation*}
Then \eqref{se} reads
\begin{equation}\label{sety}
\begin{split}
	&4A^2R^2e^{-y}\cos\tau=16B^2R^2-\tau^2+y^2,\\
	&4A^2R^2e^{-y}\sin\tau=-2\tau y.
\end{split}
\end{equation}
Since $\tau,y>0$, the second equation in \eqref{sety} implies that
\begin{equation}\label{rest}
\tau\in(2\pi n-\pi,2\pi n),\quad n\in\N.
\end{equation}
Dividing the first equation in \eqref{sety} by the second one, we arrive at the following quadratic equation for $y$:
\begin{equation*}
y^2+2\tau y\cot\tau-\tau^2+16B^2R^2=0,
\end{equation*}
which has a single positive solution
\begin{equation}\label{yexp}
y=-\tau\cot\tau+\left(
\tau^2\cot^2\tau+\tau^2-16B^2R^2
\right)^{1/2}.
\end{equation}
Combining \eqref{sety}, \eqref{rest} and \eqref{yexp}, we obtain the following system:
\begin{equation}\label{sety1}
\begin{split}
	&y=-\tau\cot\tau+\left(
	\tau^2\cot^2\tau+\tau^2-16B^2R^2
	\right)^{1/2},\quad
	\tau\in(2\pi n-\pi,2\pi n),\, n\in\N,\\
	&ye^y=-2A^2R^2\frac{\sin\tau}{\tau}.
\end{split}
\end{equation}
Notice that since we assume that $4R|B|\leq \pi$ and $\tau>\pi$, the function under the square root in \eqref{sety1} is always positive.

\textbf{Step 2}. Let us investigate the behavior of $y=y(\tau)$ for
$\tau\in(2\pi n-\pi,2\pi n)$, $n\in\N$.
It is evident that
\begin{equation*}
\lim\limits_{\tau\uparrow2\pi n}y(\tau)=\infty,
\quad n\in\N.
\end{equation*}
Taking into account that
\begin{equation}\label{y0}
\left(
1+\frac{\tau^2-16B^2R^2}{\tau^2\cot^2\tau}    
\right)^{1/2}
=1+\frac{\tau^2-16B^2R^2}{2\tau^2\cot^2\tau}
+\mathcal{O}\left(\cot^{-4}\tau\right),
\quad \tau\downarrow -\pi+2\pi n,\,n\in\N,
\end{equation}
we obtain the following limit:
\begin{equation*}
\lim\limits_{\tau\downarrow-\pi+2\pi n}y(\tau)=0.
\end{equation*}

\textbf{Step 3}. Now we study $y^\prime(\tau)$, $\tau\in(2\pi n-\pi,2\pi n)$, $n\in\N$.
Direct calculations show that
\begin{equation}\label{dy}
\begin{split}
	&y^\prime(\tau)=
	\frac{\left(\tau-\frac{1}{2}\sin(2\tau)\right)
		\left(\tau\cos\tau
		+d(\tau)\right)
		-\tau\sin^3\tau}
	{d(\tau)\sin^2\tau},\\
	&d(\tau)=\left(\tau^2-16B^2R^2\sin^2\tau\right)^{1/2}.
\end{split}
\end{equation}
It is easy to see that
\begin{equation*}
\lim\limits_{\tau\uparrow2\pi n}
y^\prime(\tau)=\infty,
\quad n\in\N.
\end{equation*}
Using the following relations:
\begin{equation*}
\begin{split}
	&\tau-\frac{1}{2}\sin2\tau=2\pi n-\pi
	+\mathcal{O}\left(
	(\tau+\pi-2\pi n)^3
	\right),\\
	&\cos\tau
	+\frac{d(\tau)}{\tau}=
	\cos\tau
	+\left(1-16B^2R^2\frac{\sin^2\tau}
	{\tau^2}\right)^{1/2}\\
	&\qquad\qquad\quad\,\,\,\,
	=\left(\frac{1}{2}-\frac{8B^2R^2}{\pi^2(2n-1)^2}\right)
	(\tau+\pi-2\pi n)^2
	+\mathcal{O}\left(
	(\tau+\pi-2\pi n)^4
	\right),
\end{split}
\end{equation*}
where $\tau\to2\pi n-\pi$, $n\in\N$, we obtain
\begin{equation}\label{yp0}
\lim\limits_{\tau\downarrow2\pi n-\pi}
y^\prime(\tau)=
\frac{\pi^2(2n-1)^2-16B^2R^2}{2\pi(2n-1)},
\quad n\in\N.
\end{equation}

\textbf{Step 4}. 
Let us prove that $y(\tau)$ is convex for 
$\tau\in(2\pi n-\pi,2\pi n)$, $n\in\N$.
Direct computations show that
\begin{equation}\label{y2p}
\begin{split}
	&y^{\prime\prime}(\tau)=\frac{-g(\tau)}{d^3(\tau)\sin^3\tau},\\
	&g(\tau)=d^2(\tau)\left(
	\tau^2(1+\cos^2\tau)+d(\tau)(2\tau\cos\tau-\sin\tau)
	-\tau\sin(2\tau)+\sin^2\tau
	\right)\\
	&\qquad\quad-\tau\sin\tau
	\left(\tau-8B^2R^2\sin 2\tau\right)
	(\sin\tau-\tau\cos\tau),
\end{split}
\end{equation}
where $d(\tau)$ is given in \eqref{dy}.
We must prove that $g(\tau)\geq0$ for $\tau\in(2\pi n-\pi,2\pi n)$, $n\in\N$.
It is easy to compute that
\begin{equation*}
g(2\pi n-\pi)=0,\quad
g(2\pi n)=2^6(\pi n)^4,\quad n\in\N.
\end{equation*}
The most involving task is to show that $g(\tau)\geq$ in the vicinity of $\tau=2\pi n-\pi$.
Using the following expansions:
\begin{equation*}
\begin{split}
	&\cos\tau=-1+\frac{(\tau+\pi-2\pi n)^2}{2}
	+\mathcal{O}\left(
	(\tau+\pi-2\pi n)^4
	\right),\\
	&\sin\tau=-(\tau+\pi-2\pi n)
	+\frac{(\tau+\pi-2\pi n)^3}{6}
	+\mathcal{O}\left(
	(\tau+\pi-2\pi n)^4
	\right),\\
	&d(\tau)=\tau-\frac{8B^2R^2}{\tau}
	(\tau+\pi-2\pi n)^2
	+\mathcal{O}\left(
	(\tau+\pi-2\pi n)^4
	\right),
\end{split}
\end{equation*}
where $\tau\to2\pi n-\pi$, $n\in\N$, we conclude that
\begin{equation*}
g(\tau)=\frac{\tau}{6}
\left(144B^2R^2+17\tau^2\right)
(\tau+\pi-2\pi n)^3
+\mathcal{O}\left(
(\tau+\pi-2\pi n)^4
\right),\quad
\tau\to\pi-2\pi n,\,n\in\N
\end{equation*}
and thus   $g(\tau)>0$ for 
$\tau$ near $2\pi n-\pi$.

\textbf{Step 5.}
Now we are at the position to study the solutions of \eqref{sety1}.
Using \eqref{y0} and \eqref{yp0}, we obtain
\begin{equation}\label{dl}
\lim\limits_{\tau\downarrow2\pi n-\pi}
\left(y(\tau)e^{y(\tau)}\right)^\prime
=\frac{\pi^2(2n-1)^2-16B^2R^2}{2\pi(2n-1)},
\quad n\in\N.
\end{equation}
Also we have the following identity:
\begin{equation}\label{dr}
\lim\limits_{\tau\downarrow2\pi n-\pi}
\left(\frac{\sin\tau}{\tau}\right)^\prime
=\frac{-1}{\pi(2n-1)},
\quad n\in\N.
\end{equation}
Then we notice that
\begin{enumerate}[(i)]
\item $y(\tau)e^{y(\tau)}$ is convex for 
$\tau\in (2\pi n-\pi,2\pi n)$, see Step 4 above;
\item $\left(-\frac{\sin\tau}{\tau}\right)$ 
has a unique maximum point 
$\tau_n^*$ on the interval
$(2\pi n-\pi,2\pi n)$, which is a unique solution of the equation $\tau_n^*=\tan\tau_n^*$, 
$\tau_n^*\in (2\pi n-\pi,2\pi n-\pi/2)$;
\item $\left(-\frac{\sin\tau}{\tau}\right)$
is concave for 
$\tau\in(2\pi n-\pi,\tau_n^*)$.
\end{enumerate}
Leveraging (i)--(iii) above and using
\eqref{dl} and \eqref{dr}, we conclude that\\
if $4A^2R^2\leq \pi^2(2n-1)^2-16B^2R^2$ for some $n\in\N$, then \eqref{sety1} has no solutions for such a $n$, while\\
if $4A^2R^2>\pi^2(2n-1)^2-16B^2R^2$ for some $n\in\N$, then \eqref{sety1} has a unique simple solution for such a $n$.
This yields item 3).
\end{proof}

\section{}\label{Ap2}
\begin{proof}[Proof of Proposition \ref{a1Ws}]
Notice that
\begin{equation}\label{ria1}
\Ree\,a_1(k)=
1+\frac{A^2\cos 4kR}{4(k^2-B^2)},\quad
\Imm\,a_1(k)=
\frac{A^2\sin 4kR}{4(k^2-B^2)}.
\end{equation}
\textbf{Step 1.}
Consider the case
$0<R<\frac{\pi}{2\left(4B^2+A^2\right)^{1/2}}$.
Then we have from \eqref{ria1} that
$\Ree\,a_1(-\pi/4R)>0$,
which implies that
$\Ree\,a_1(k)>0$ for all
$k\in(-\infty,-\pi/4R]$ and therefore
$\int_{-\infty}^k d\arg(a_1(k))\in[-\pi/2,\pi/2]$,
$k\in(-\infty,-\pi/4R]$.
Taking into account that 
$\sin 4kR<0$ for all $k\in(-\pi/4R,-|B|)$, we conclude that
$\int_{-\infty}^k d\arg(a_1(k))\in(-\pi,0)$ for all
$k\in(-\pi/4R,-|B|)$ and the first statement in \eqref{R0} is established.

Observing that 
\begin{equation*}
\begin{split}
	&\lim\limits_{k\uparrow-|B|}
	\tan\left(\arg a_1(k)\right)
	=-\tan(4|B|R),\quad
	4|B|R\neq\frac{\pi}{2},\\
	&\lim\limits_{k\uparrow-|B|}
	\tan\left(\arg a_1(k)\right)
	=-\infty,\quad
	4|B|R=\frac{\pi}{2},
\end{split}
\end{equation*}
we arrive at the second equation in \eqref{R0}.
Then using that $\frac{\sin 4kR}{k^2-B^2}>0$ for
$k\in(-|B|,0)$, we obtain \eqref{R0-1}.
Finally, since 
$a_1(0)>0$ and $a_1(0)<0$ for
$4B^2-A^2>0$ and $4B^2-A^2<0$ respectively,
we conclude
\eqref{arga10} for $0<R<\frac{\pi}{2\left(4B^2+A^2\right)^{1/2}}$.

\textbf{Step 2}.
Consider
$\frac{(2n-1)\pi}
{2\left(4B^2+A^2\right)^{1/2}}<R<\frac{(2n+1)\pi}
{2\left(4B^2+A^2\right)^{1/2}}$ for some $n\in\N$.
Taking into account that (see \eqref{om1})
\begin{enumerate}[(i)]
\item $\Ree\,a_1(k)>0$ for all
$k\leq\omega_n-\pi/2R$ and $k=\omega_n-\pi/4R$,
\item $\sin 4kR<0$ for 
$k\in(\omega_n-\pi/2R,\omega_n-\pi/4R)$ and
$\sin 4kR>0$ for
$k\in(\omega_n-\pi/4R,\omega_n)$,
\end{enumerate}
we conclude that
$\int_{-\infty}^kd\arg a_1(k)\in(-\pi,\pi)$ for $k<\omega_n$ and
$\int_{-\infty}^{\omega_n}d\arg a_1(k)=\pi$, see \eqref{ria1}. 
Thus, the first and the second equations in \eqref{Rn} are proved for $j=1$ and $j=0$ respectively.

Then using that
\begin{enumerate}[(i)]
\item $\Ree\,a_1(k)<0$ for all $k=\omega_j$, 
$j=1,\dots,n$,
\item $\sin 4kR<0$ for 
$k\in(\omega_{j+1},
\omega_{j+1}+\pi/4R)$ and
$\sin 4kR>0$ for
$k\in(\omega_{j+1}+\pi/4R,\omega_j)$,
$j=1,\dots, n-1$,
\item $\sin 4kR<0$ for 
$k\in(\omega_1,-|B|)$,
\end{enumerate}
we arrive at \eqref{Rn} for all $j$.
The proofs of \eqref{Rn-1} and item 3) are the same as in Step 1 above.
\end{proof}
\newpage
	
\end{document}